\DeclareMathOperator{\shHom}{\mathcal{H}\!\mathit{om}}
\newcommand{\on}{\operatorname}
\newcommand{\bpr}{\begin{proof}}
\newcommand{\epr}{\end{proof}}
\newcommand{\ol}{\overline}
\newcommand{\Llra}{\Longleftrightarrow}
\newcommand{\mc}{\mathcal}
\newcommand{\mb}{\mathbb}
\newcommand{\wt}{\widetilde}
\newcommand{\rcatMod}{\operatorname{Mod-}}
\newcommand{\rQch}{\operatorname{Qcoh}}
\newcommand{\rGr}{\operatorname{Gr-}\hskip -2pt}
\newcommand{\rQgr}{\operatorname{Qgr-}\hskip -2pt}
\newcommand{\rTors}{\operatorname{Tors-}\hskip -2pt}
\newcommand{\todo}[1]{{\color{red} #1 }}
\newcommand{\dirlim}{\underrightarrow{\lim}}
\newcommand{\wh}{\widehat}
\newcommand{\beq}{\begin{equation}}
\newcommand{\eeq}{\end{equation}}
\newcommand{\Hom}{{\rm Hom}}
\newcommand{\End}{{\rm End}}
\newcommand{\Ext}{{\rm Ext}}
\DeclareMathOperator{\im}{Im}
\newcommand{\coker}{\operatorname{coker}}
\numberwithin{equation}{section}
 \theoremstyle{plain}
\newtheorem{theorem}[equation]{Theorem}
\newtheorem{lemma}[equation]{Lemma}
\newtheorem{corollary}[equation]{Corollary}
\newtheorem{proposition}[equation]{Proposition}
\theoremstyle{definition}
\newtheorem{definition}[equation]{Definition}
\newtheorem{remark}[equation]{Remark}
\newtheorem{standing-hypothesis}[equation]{Standing Hypothesis}
\newtheorem{example}[equation]{Example}
\title{Well-closed subschemes of noncommutative schemes}
\author{D. Rogalski}
\date{}
\thanks{The author was partially supported by the NSF grant 
DMS-1201572 and the NSA grant H98230-15-1-0317.}
\subjclass[2010]{18E15, 
18A40, 
14A22. 
}
\keywords{Grothendieck category, noncommutative blowing up, adjoint functors, locally noetherian, closed subcategory}
\begin{document}

\begin{abstract}  
Van den Bergh has defined the blowup of a noncommutative surface at a point lying on a commutative divisor \cite{VdB}.  
We study one aspect of the construction, with an eventual aim of defining more general kinds of noncommutative blowups.
Our basic object of study is a quasi-scheme $X$ (a Grothendieck category).
Given a closed subcategory $Z$, in order to define a blowup of $X$ along $Z$ one first needs to have a functor $F_Z$ which is an 
analog of tensoring with the defining ideal of $Z$.  Following Van den Bergh, a closed subcategory $Z$ which has such a functor is 
called well-closed.  We show that well-closedness can be characterized by the existence of certain projective effacements for each object of $X$, and that the needed functor $F_Z$ has an explicit description in terms of such effacements.  As an application, we prove 
that closed points are well-closed in quite general quasi-schemes.
\end{abstract}

\maketitle

\section{Introduction}
This paper is the first part of a bigger project to study further the method of noncommutative blowing up developed by Van den Bergh in the monograph \cite{VdB}.  Van den Bergh gives a description of the blowup of a noncommutative surface at a point lying on a commutative divisor on the surface, and shows that this construction has many good properties, similar to those of a usual commutative blowup.  Our overall aim is both to make some aspects of Van den Bergh's procedure more explicit, as well as to show that the same ideas apply in a broader setting, which would allow one to define blowups of more general subschemes of noncommutative schemes.  In this paper, we focus primarily on a categorical notion which is a  fundamental building block for Van den Bergh's blowing up machinery:  the well-closedness of a closed subscheme of a noncommutative scheme.  Roughly speaking, this is a condition that one has a right exact functor which is an analog of 
``tensoring with the ideal sheaf defining the closed subscheme".  

Following \cite{VdB}, we take as our main object of study in noncommutative geometry a \emph{quasi-scheme}, in other words a Grothendieck category $X$.   A Grothendieck category is an abelian category with exact direct limits and a generator.  We review some background on such categories in Section~\ref{sec:background}.   We often assume in addition that $X$ is locally noetherian, in other 
words that $X$ has a set of noetherian generators.   Let $k$ be a field.  Important examples of quasi-schemes include $X = \rcatMod A$, the category of right modules over a noetherian $k$-algebra $A$, and $X = \rQch Y$, the category of quasi-coherent sheaves on a noetherian (commutative) $k$-scheme $Y$.  We are especially interested in applications to noncommutative projective geometry.  The most fundamental kind of quasi-scheme in this setting is $X = \rQgr A$, the quotient category of right $\mb{Z}$-graded modules over a connected $\mb{N}$-graded finitely generated noetherian $k$-algebra $A$, modulo the subcategory of modules which are direct limits of finite-dimensional modules.  
We call such an $X$ a \emph{noncommutative projective scheme}.  By a result of Serre, when $A$ is commutative and generated by its degree 1 elements, then $\rQgr A$ is equivalent to the category of quasi-coherent sheaves on the scheme $\on{Proj} A$.  Many important constructions in commutative algebraic geometry can be understood purely in terms of the category of quasi-coherent sheaves, and this justifies the study of Grothendieck categories as a replacement for schemes in the noncommutative case, where constructions involving actual spaces and sheaves on them are often not available.

For example, it is straightforward to find a reasonable definition of a closed subscheme $Z$ of a quasi-scheme $X$.  This is the 
notion of a \emph{closed subcategory}, namely, a full abelian subcategory $Z$ of $X$ such that the inclusion functor $i: Z \to X$ 
has both a left and right adjoint.  In most cases, this is equivalent to $Z$ being closed under subquotients, direct sums, and products.  
As evidence that this is the right definition, the closed subcategories of an affine quasi-scheme $X = \rcatMod A$ are precisely the categories $\rcatMod A/I$ for 2-sided ideals $I$ of $A$ \cite[Proposition 6.4.1]{Ros}, and the closed subcategories of a category $\rQch Y$ of quasi-coherent sheaves on a quasi-projective $k$-scheme $Y$ are the categories $\rQch W$ for closed subschemes $W$ of $Y$ \cite[Theorem 4.1]{Sm1}. 
Smith works out many basic properties of closed subcategories of quasi-schemes in \cite{Sm1}.  In particular, he defines 
the analogs of closed points in quasi-schemes $X$:  a \emph{closed point} is a closed subcategory $Z$ of $X$ which is equivalent 
to $\rcatMod D$ for some division ring $D$.  Assuming that $X$ is locally noetherian, this has a more intrinsic description as follows.  
 A simple object $P$ in a locally noetherian quasi-scheme $X$ is called \emph{tiny} if $\Hom_X(M, P)$ is a finitely generated $\End_X(P)$-module for all noetherian objects $M$.   When $P$ is tiny, the subcategory $Z$ consisting of all direct sums of $P$ is a closed point, where 
$Z \simeq \rcatMod D$ for $D = \End_X(P)$.  Conversely, all closed points are of this form \cite[Theorem 5.5]{Sm1}.

Since blowing up is such an important construction in commutative algebraic geometry, it is essential to have some analog of this in the noncommutative case.  Recall that if $Y$ is a commutative scheme with closed subscheme $W$ defined by a sheaf of ideals $\mc{I}$, 
then the blowup of $Y$ along $W$ is defined to be the relative Proj of a sheaf of graded algebras, 
namely $\on{Bl}_W Y = \on{\mathbf{Proj}}  (\mc{O}_Y \oplus \mc{I} \oplus \mc{I}^2 \oplus \dots)$.  
Trying to mimic this definition for more general quasi-schemes $X$, one runs into immediate problems.   Though we know 
what a closed subscheme of $X$ should be, finding replacements for the defining ideal sheaf of $Z$, what it means to take the powers of this ideal sheaf in order to define the Rees ring, and what the analog of the relative Proj construction should be, are major difficulties.

Van den Bergh's elegant solution is to work in a category of functors.   Let $Z$ be a closed subcategory of a quasi-scheme $X$.
While in general there is no object of $X$ that plays the role of an ideal sheaf defining $Z$, in nice cases one can find a right exact functor which plays the role of \emph{tensoring} with the ideal sheaf.  In more detail, there is a left exact functor $o_Z: X \to X$ 
which takes an object $M$ to the largest subobject of $M$ in $Z$.  Because Grothendieck categories have enough injectives, one can show that the category $\mc{L}(X, X)$ of left exact functors $X \to X$ is an abelian category, and so there is some left exact functor $G_Z: X \to X$ which fits into an exact sequence of left exact functors $0 \to o_Z \to o_X \to G_Z \to 0$, where $o_X: X \to X$ is the identity functor.   For example, 
when $X = \rcatMod A$ for a ring $A$, then $G_Z = \Hom(I, -)$, where $Z = \rcatMod A/I$.  Thus in general, we think of $G_Z$ 
as ``Hom from the ideal defining $Z$".   Now suppose that $G_Z$ has a left adjoint $F_Z: X \to X$.   For example, 
if $X = \rcatMod A$, then $F_Z = - \otimes I$.  Thus in general we think of $F_Z$ (when it exists) as the required analog of ``tensoring with the ideal defining $Z$".  Following Van den Bergh, we say that $Z$ is a \emph{well-closed} subcategory of $X$ if it is a closed 
subcategory and the functor $G_Z$ defined above has a left adjoint $F_Z$. 

Showing that $Z$ is well-closed is the first step in trying to define a blowup of a quasi-scheme $X$ along a closed subscheme $Z$.  
There are already a lot of interesting and nontrivial questions about this step.    Thus in this paper, our main goal is to study the property of well-closedness of subcategories $Z$ of quasi-schemes and the properties of the functors $F_Z$, in order 
to lay groundwork for a more general theory of blowing up.  
The two main questions we address in this paper are the following.  First, if $Z$ is well-closed, does the functor $F_Z$ have a more explicit description, other than just being the adjoint of $G_Z$?  Second, which closed subcategories $Z$ of quasi-schemes are well-closed?  To give more context to 
the first question,  in \cite{VdB}, when Van den Bergh shows that certain categories are well-closed, the argument relies ultimately on Freyd's adjoint functor theorem, which gives a criterion for when a left exact functor has a left adjoint.
The adjoint functor theorem is very abstract, and it seems difficult to see from its proof what the left adjoint it finds actually does to objects and morphisms.

We will give an answer to the first question above that works in wide generality.  
We first show in Lemma~\ref{lem:coker} below that the left exact functor $G_Z$ can be given the following explicit description.
If $i: Z \to X$ is the inclusion functor, its respective right and left adjoints $i^!: X \to Z$ and $i^*: X \to Z$ are given explicitly as follows:  
$i^!(N)$ is the unique largest subobject of $N$ which is in $Z$, and $i^*(N)$ is the unique largest factor object of $N$ which is in $Z$.
Now given an object $M \in X$, $G_Z(M) = \overline{M}/i^!(\overline{M})$, where $\overline{M}/M = i^!(E(M)/M)$ for an injective hull $E(M)$ of $M$.  Thus $G_Z$ first extends $M$ by the largest possible essential extension by an object in $Z$, and then mods out by the largest subobject in $Z$.   The action of $G_Z$ on morphisms can be defined similarly; any morphism $M \to N$ extends (non-uniquely) to a morphism $\overline{M} \to \overline{N}$, 
which induces a morphism of the factor objects $G_Z(M) \to G_Z(N)$ (which does not depend on the choice of extension). 

The left adjoint $F_Z$ of $G_Z$, when it exists, can be described in a roughly dual way to the explicit description of $G_Z$ just given, even though $X$ does not have enough projectives in general, much less projective covers.  Given a collection $S$ of objects in $X$, an \emph{$S$-projective effacement} of an object $M$ is an epimorphism $\pi: \underline{M} \to M$ satisfying the following lifting property:  given any epimorphism $f: P \to M$ with kernel in $S$, there exists $g: \underline{M} \to P$ such that $f g = \pi$.  We now give our first main result.
\begin{theorem} (Theorem~\ref{thm:equivs}.)
\label{thm:mainthm1} Let $X$ be a locally noetherian Grothendieck category, let $Z$ be a closed subcategory of $X$, and let $S$ be the collection of all objects in $Z$ which are injective in the category $Z$.  Let $0 \to o_Z \to o_X \to G_Z \to 0$ be the exact sequence in the category $\mc{L}(X, X)$ as above.  Then the following are equivalent:
\begin{enumerate}
\item $G_Z$ has a left adjoint $F_Z$; that is, $Z$ is well-closed in $X$.
\item The natural map $\Ext^1(M, \prod_{\alpha} N_{\alpha}) \to \prod_{\alpha} \Ext^1(M, N_{\alpha})$ is 
an isomorphism, for all small collections  $\{ N_{\alpha} \}$ of objects in $S$ and for all $M \in X$.
\item Every object $M$ in $X$ has an $S$-projective effacement.
\end{enumerate}
\end{theorem}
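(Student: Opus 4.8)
The plan is to prove the cycle of implications $(1)\Rightarrow(2)\Rightarrow(3)\Rightarrow(1)$, using the Special Adjoint Functor Theorem for the first implication, a universal-extension argument for the second, and an explicit dual construction of $F_Z$ for the third. First I would record several preliminary facts. From the sequence $0\to o_Z\to o_X\to G_Z\to 0$ together with the explicit description of $G_Z$ one gets, for every $M$, a natural four-term exact sequence $0\to o_Z(M)\to M\xrightarrow{\eta_M} G_Z(M)\to R^1 o_Z(M)\to 0$, where $R^1 o_Z$ is the first right derived functor of $o_Z$. Since $i$ is exact its right adjoint $i^!$ preserves injectives, and $o_Z=i\,i^!$ preserves products; for an injective object $E$ one has $\overline E=E$, hence $G_Z(E)=E/o_Z(E)$ with $o_Z(E)=i^!(E)\in S$. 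For $N\in S$ the injective hull satisfies $o_Z(E(N))=N$, because $N$ has no proper essential extension inside $Z$; consequently every object of $S$ is of the form $o_Z(E)$ for an injective $E$, and moreover $G_Z(N)\cong R^1o_Z(N)$ with $\eta_N=0$. Finally, the natural map in $(2)$ is always a monomorphism: a class killed by every projection $\prod_\alpha N_\alpha\to N_\alpha$ splits after each projection, and the resulting retractions assemble, through the universal property of the product, to a retraction splitting the class.

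For $(1)\Rightarrow(2)$, suppose $F_Z$ exists, so that $G_Z$ preserves all limits and in particular all products. Using left exactness of $G_Z$ and the fact that products commute with the kernel of an injective copresentation $0\to M_\alpha\to E^0_\alpha\to E^1_\alpha$, preservation of arbitrary products reduces to preservation of products of injectives. For injectives $E_\alpha$ the comparison map $\prod_\alpha E_\alpha/\prod_\alpha o_Z(E_\alpha)\to\prod_\alpha\bigl(E_\alpha/o_Z(E_\alpha)\bigr)$ has cokernel $R^1\!\prod(o_Z E_\alpha)$, the first derived functor of the product applied to the family $\{o_Z E_\alpha\}\subseteq S$. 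Taking $E_\alpha=E(N_\alpha)$ then shows $R^1\!\prod(N_\alpha)=0$ for every family in $S$. I would feed this into the Grothendieck spectral sequence of the composite $\Hom_X(M,-)\circ\prod$, whose edge sequence reads $0\to\Ext^1(M,\prod_\alpha N_\alpha)\to\prod_\alpha\Ext^1(M,N_\alpha)\to\Hom_X(M,R^1\!\prod N_\alpha)$; vanishing of $R^1\!\prod$ collapses the third term and yields the isomorphism of $(2)$.

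For $(2)\Rightarrow(3)$, I would identify an epimorphism $P\to M$ with kernel $K\in S$ with the corresponding class in $\Ext^1(M,K)$; since the map in $(2)$ is always mono, $(2)$ is equivalent to its surjectivity. Given surjectivity, fix an injective cogenerator $W$ of the Grothendieck category $Z$. Every $N\in S$, being injective in $Z$, is a direct summand of a power of $W$, which both controls the relevant extensions set-theoretically and lets me build a single universal extension $0\to N_0\to\underline M\xrightarrow{\pi} M\to 0$ by lifting the tautological family in $\prod\Ext^1(M,W)$ to a class in $\Ext^1(M,N_0)$ with $N_0$ a power of $W$. Every $S$-kernel epimorphism onto $M$ is then a pushout of $\pi$, and the pushout square supplies the required lift, so $\pi$ is an $S$-projective effacement. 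Conversely, to recover surjectivity from an effacement $\pi\colon\underline M\to M$ with $L=\Ker\pi$, I would represent a family $(\xi_\alpha)$ by extensions $P_\alpha\to M$, lift through $\pi$ to maps $g_\alpha\colon\underline M\to P_\alpha$, restrict to $L$ to obtain $h_\alpha\colon L\to N_\alpha$ and hence $h\colon L\to\prod_\alpha N_\alpha$, and check that the pushout of the class of $\underline M$ along $h$ maps to $(\xi_\alpha)$.

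The implication $(3)\Rightarrow(1)$ is where the real work lies, and it is the step I expect to be the main obstacle. Rather than trying to recover full product preservation (the converse passage from $(2)$ to the vanishing of $R^1\!\prod$ is not formal), I would construct $F_Z$ explicitly by a recipe dual to $G_Z(M)=\overline M/i^!(\overline M)$: replace the maximal essential $Z$-extension $M\hookrightarrow\overline M$ by an $S$-projective effacement $\underline M\twoheadrightarrow M$, and replace the passage to the quotient by the largest $Z$-subobject by the passage to the kernel $\Ker(\underline M\to i^*(\underline M))$ of the map onto the largest $Z$-factor object. The difficulties are that $\underline M$ is pinned down only up to non-canonical domination, so I must show the construction is functorial and independent of choices up to canonical isomorphism, and then verify the adjunction isomorphism $\Hom_X(F_Z M,N)\cong\Hom_X(M,G_Z N)$ naturally in both variables, exploiting the lifting property of effacements and the explicit form of $G_Z$ — all without the crutch of projective covers, which $X$ does not possess.
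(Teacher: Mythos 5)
Your preliminaries and the first two implications are correct, and they genuinely diverge from the paper's route. For $(1)\Rightarrow(2)$ the paper cites Van den Bergh for the equivalence of $(1)$ with the vanishing of $R^1\!\prod$ on families in $S$, and then applies the edge-sequence argument of Lemma~\ref{lem:extsumprod}(1); your direct argument --- right adjoints preserve products, $o_Z$ preserves products, $G_Z(E)=E/o_Z(E)$ on injectives, $o_Z(E(N))=N$ for $N\in S$, so the cokernel $R^1\!\prod(N_\alpha)$ of the comparison map must vanish --- is a correct, self-contained replacement for that citation. For $(2)\Rightarrow(3)$ your universal extension over an injective cogenerator $W$ of $Z$ (lift the tautological family in $\prod_{\xi\in\Ext^1(M,W)}\Ext^1(M,W)$, then realize any extension with kernel $K\in S$ as a pushout, using a splitting of $K$ off a power $W^J$ and injectivity of $\Ext^1(M,W^J)\to\prod_J\Ext^1(M,W)$) is correct and is cleaner than the paper's Proposition~\ref{prop:efface}, which instead indexes over the small set of indecomposable injectives of $Z$ and therefore needs $M$ noetherian together with parts (3) and (4) of Lemma~\ref{lem:pe-props} to pass from that subcollection to all of $S$ and to all objects; your argument needs neither reduction.

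The gap is in $(3)\Rightarrow(1)$, which you correctly flag as the main obstacle but leave unexecuted; this is exactly where the paper's work lies (Lemma~\ref{lem:kerinS}, Lemma~\ref{lem:goodeff}, Lemma~\ref{lem:inj-bottom}, Proposition~\ref{prop:functors}), and your sketch omits two ingredients without which the plan fails. First, condition $(3)$ only provides an effacement $\underline{M}\to M$ with unconstrained kernel, but to make $M\mapsto K_Z(\underline{M})$ functorial you must lift each $f\colon M\to N$ to $\underline{f}\colon\underline{M}\to\underline{N}$, and the lifting property of $\underline{M}$ (Lemma~\ref{lem:liftextend}) applies only against epimorphisms whose kernel lies in $S$; so you must first replace arbitrary effacements by ones whose kernel is \emph{in} $S$. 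This is Lemma~\ref{lem:goodeff}: force the kernel into $Z$ via Lemma~\ref{lem:kerinS}, then push out along its injective hull in $Z$ and check, using injectivity in $Z$ of the members of $S$, that the pushout is still an $S$-effacement. (Your own universal extension from $(2)\Rightarrow(3)$ happens to have kernel a power of $W$, hence in $S$, but in $(3)\Rightarrow(1)$ you start from an arbitrary effacement, so this must be proved.) Second, the adjunction bijection $\Hom(F_ZM,N)\cong\Hom(M,G_ZN)$ requires lifting a map $M\to G_Z(N)$ through the epimorphism $\overline{N}\to G_Z(N)$, whose kernel is $i^!(\overline{N})$; such a lift exists only because $i^!(\overline{N})$ lies in $S$, i.e., is injective in $Z$ --- this is Lemma~\ref{lem:inj-bottom}, which shows $i^!(N)\subseteq i^!(\overline{N})$ is an injective hull in the category $Z$. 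That fact is the entire reason the class $S$ of injectives of $Z$ (rather than all of $Z$) is the right effacing class, and it does not appear in your outline. Until these two points, together with the well-definedness, functoriality, and naturality checks of Proposition~\ref{prop:functors}(1) and (3), are carried out, your cycle of implications is not closed.
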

\noindent Moreover, when the conditions in the theorem hold, then $F_Z$ can be explicitly constructed.  On objects, $F_Z(M) = K_Z(\underline{M})$, where $\underline{M} \to M$ is a fixed $S$-projective effacement of $M$ and $K_Z$ is the functor which takes an object $N$ to its unique smallest subobject $N'$ such that $N/N' \in Z$.  To define the action of $F_Z$ on morphisms, given a morphism $M \to N$, it lifts (non-uniquely) to a morphism $\underline{M} \to \underline{N}$, which restricts to a morphism $G_Z(M) \to G_Z(N)$ (which does not depend on the choice of lift).  

Van den Bergh also defines a stronger condition on a closed subcategory $Z$ called \emph{very well-closed}, 
which is equivalent to $Z$ being well-closed and the category $Z$ having exact direct products \cite[Corollary 3.4.11]{VdB}.
Only very special closed subcategories should be expected to satisfy this stronger condition, 
but it is quite useful when it holds.   Our second main result is a characterization of very well-closedness similar to 
Theorem~\ref{thm:mainthm1}.  
\begin{theorem}  (Theorem~\ref{thm:equivs2}.)
\label{thm:mainthm2} Let $X$ be a locally noetherian Grothendieck category and let $Z$ be a closed subcategory of $X$.
Then the following are equivalent:
\begin{enumerate}
\item $Z$ is well-closed in $X$ and the category $Z$ has exact direct products; that is, $Z$ is very well-closed in $X$.
\item The natural map $\Ext^1(M, \prod_{\alpha} N_{\alpha}) \to \prod_{\alpha} \Ext^1(M, N_{\alpha})$ is 
an isomorphism, for all small collections  $\{ N_{\alpha} \}$ of objects in $Z$ and for all $M \in X$.
\item Every object $M$ in $X$ has a $Z$-projective effacement.
\end{enumerate}
\end{theorem}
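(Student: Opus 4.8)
My plan is to prove the cycle $(1)\Rightarrow(2)\Rightarrow(3)\Rightarrow(1)$, leaning on Theorem~\ref{thm:equivs} throughout. The guiding principle is that, writing $S$ for the collection of injective objects of $Z$, one has $S\subseteq Z$, so each of the conditions (2) and (3) here is formally stronger than the corresponding condition of Theorem~\ref{thm:equivs}; consequently any of them already forces $Z$ to be well-closed. What must be added, in exactly one of the implications, is the exactness of products in $Z$, and that is where the genuinely new work lies. Throughout I use that products of objects of $Z$ agree whether formed in $Z$ or in $X$ (the inclusion $i\colon Z\to X$, being right adjoint to $i^*$, preserves products), so that a family of epimorphisms in $Z$ has product epic in $Z$ if and only if it does in $X$.

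For $(3)\Rightarrow(1)$ the two halves of very well-closedness are obtained separately. Since $S\subseteq Z$, a $Z$-projective effacement is in particular an $S$-projective effacement, so condition (3) implies condition (3) of Theorem~\ref{thm:equivs}, and that theorem gives well-closedness. To get exactness of products in $Z$, take a family of epimorphisms $q_\alpha\colon B_\alpha\to C_\alpha$ in $Z$ with kernels $A_\alpha\in Z$, set $C=\prod_\alpha C_\alpha\in Z$, and choose a $Z$-projective effacement $\pi\colon\underline{C}\to C$. Pulling $q_\alpha$ back along the projection $C\to C_\alpha$ yields an epimorphism $B'_\alpha\to C$ with kernel $A_\alpha\in Z$, through which $\pi$ lifts by the effacement property; composing the lift with $B'_\alpha\to B_\alpha$ produces $\tau_\alpha\colon\underline{C}\to B_\alpha$ with $q_\alpha\tau_\alpha$ equal to the $\alpha$-component of $\pi$. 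Assembling the $\tau_\alpha$ into $\tau\colon\underline{C}\to\prod_\alpha B_\alpha$ gives $(\prod_\alpha q_\alpha)\tau=\pi$, and since $\pi$ is epic so is $\prod_\alpha q_\alpha$. Thus products are exact in $Z$.

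The implication $(2)\Rightarrow(3)$ needs no new idea: the construction producing a projective effacement from the $\Ext^1$-product isomorphism, carried out in the proof of Theorem~\ref{thm:equivs} for the collection $S$ (forming a universal extension whose class realizes the relevant identity element in $\Ext^1(M,\prod_\alpha N_\alpha)\cong\prod_\alpha\Ext^1(M,N_\alpha)$), uses only that the isomorphism holds for the collection at hand, and so applies with $Z$ in place of $S$ to build a $Z$-projective effacement of each object.

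The main work, and the step I expect to be the real obstacle, is $(1)\Rightarrow(2)$, because it forces one to reconcile $\Ext^1_X$ (whose computation wants injectives of the ambient category $X$) with the hypothesis of exactness of products \emph{inside} $Z$. The device is to resolve by injectives of $Z$: for $N_\alpha\in Z$ choose short exact sequences $0\to N_\alpha\to S^0_\alpha\to L_\alpha\to0$ in $Z$ with $S^0_\alpha\in S$ and $L_\alpha\in Z$. Exactness of products in $Z$ keeps $0\to\prod_\alpha N_\alpha\to\prod_\alpha S^0_\alpha\to\prod_\alpha L_\alpha\to0$ exact, so the long exact $\Ext^1_X(M,-)$-sequence of this product maps, via the natural product-comparison maps, to the product of the individual long exact sequences (products of exact sequences of abelian groups being exact). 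In this ladder the $\Hom$-terms compare isomorphically (Hom commutes with products) and the $S^0_\alpha$-term compares isomorphically by Theorem~\ref{thm:equivs}, as $S^0_\alpha\in S$ and $Z$ is already well-closed. A first pass with the sharp four lemma then shows that $\Ext^1_X(M,\prod_\alpha N_\alpha)\to\prod_\alpha\Ext^1_X(M,N_\alpha)$ is a monomorphism for \emph{every} family in $Z$, using only the isomorphisms at the $\Hom$- and $S^0_\alpha$-terms. Feeding this monomorphism statement, applied now to the family $\{L_\alpha\}$, back into the five lemma upgrades the map for $\{N_\alpha\}$ from mono to iso, which is condition (2). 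The delicate points will be the naturality that makes the ladder commute and the bookkeeping showing that exactness of products in $Z$ and well-closedness each enter at exactly the right rung; the two-stage bootstrap, monomorphism first and isomorphism second, is the crux.
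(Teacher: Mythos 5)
Your cycle $(1)\Rightarrow(2)\Rightarrow(3)\Rightarrow(1)$ is sound, and two of its three legs genuinely diverge from the paper's proof. The paper routes $(1)\Rightarrow(2)$ through Van den Bergh's fourth equivalent condition --- vanishing of $R^1\prod$ on all small families in $Z$ \cite[Corollary 3.4.11]{VdB} --- followed by the spectral-sequence comparison of Lemma~\ref{lem:extsumprod}(1); and it proves $(3)\Rightarrow(1)$ by showing that a $Z$-projective effacement in $X$ descends, after applying $i^*$, to a projective effacement inside the category $Z$ itself, and then invoking Roos' theorem that a Grothendieck category with projective effacements satisfies (AB4*) \cite[Theorem 1.3]{Roos}, for which the paper supplies its own proof in Remark~\ref{rem:roosproof}. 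Your replacements are correct and more elementary: the pullback argument (pulling each $q_\alpha$ back along $\prod_\beta C_\beta \to C_\alpha$ preserves the kernel $A_\alpha \in Z$, so the effacement of $\prod_\alpha C_\alpha$ lifts through each pullback and the lifts assemble into a factorization of $\pi$ through $\prod_\alpha q_\alpha$) gives exactness of products in $Z$ with no appeal to Roos at all; and your $(1)\Rightarrow(2)$ --- resolve each $N_\alpha$ by an injective of $Z$, use exactness of products in $Z$ to keep the product sequence short exact, compare the two long exact sequences, then bootstrap (four lemma gives the monomorphism for \emph{every} family in $Z$; the five lemma, fed the mono statement at the $\{L_\alpha\}$-term, upgrades to an isomorphism) --- is a valid substitute for the spectral-sequence argument. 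What the paper's route buys is the additional equivalence with $R^1\prod$-vanishing, which is Van den Bergh's actual definition of very well-closed and which your proof never touches; what your route buys is a self-contained argument using only diagram lemmas.

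The one genuine gap is in $(2)\Rightarrow(3)$. The universal-extension construction cannot be applied ``with $Z$ in place of $S$'' verbatim: it forms a product indexed by (a basis of) $\bigcup_{N\in S'}\Ext^1_X(M,N)$, and this requires the indexing collection $S'$ to be small; the class of all objects of $Z$ is not small, and even in Theorem~\ref{thm:equivs} the construction is run not on $S$ itself but on the small set $S'$ of indecomposable injectives of $Z$. The repair is exactly what the paper does: take $S'$ to be the set of isomorphism classes of noetherian objects of $Z$, which is small because $Z$ is a Grothendieck category with a generator and $X$ is well-powered; since $Z$ is locally noetherian by Lemma~\ref{lem:small}(3), every object of $Z$ is a direct limit of noetherian objects, so Proposition~\ref{prop:efface} (whose hypothesis demands precisely such a small $S'$ generating $S$ under direct sums or direct limits) produces a $Z$-projective effacement for every noetherian $M \in X$ --- noetherianity of $M$ entering through the upgrade of Lemma~\ref{lem:pe-props}(4) --- and Lemma~\ref{lem:pe-props}(3) then extends to all objects. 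So this leg does require one new idea beyond the proof of Theorem~\ref{thm:equivs}: a different choice of small subcollection, justified by local noetherianity of $Z$ rather than by decomposing injectives into indecomposables.
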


Our description of the functors $F_Z$ using projective effacements is helpful in giving a partial answer to the second question, concerning which 
closed subcategories are well-closed, in the important special case of closed points.
\begin{theorem} (Theorem~\ref{thm:pt}.)
\label{thm:mainthm3}
Let $X$ be a locally noetherian Grothendieck $k$-category.  Suppose that $Z$ is a closed point in $X$, that is, $Z$ is the category of direct sums of a tiny simple object $P$ in $X$.  Suppose further that $\dim_k \Ext^1_X(M,P) < \infty$ for all noetherian objects $M \in X$.  
Then $Z$ is very well-closed in $X$.  
\end{theorem}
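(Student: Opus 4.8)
The plan is to verify one of the equivalent conditions of Theorem~\ref{thm:mainthm2}; I will aim for condition~(2), the assertion that the natural map $\Ext^1_X(M,\prod_\alpha N_\alpha)\to\prod_\alpha\Ext^1_X(M,N_\alpha)$ is an isomorphism for every $M\in X$ and every small family $\{N_\alpha\}$ of objects of $Z$. Throughout I use the description of a closed point recalled in the introduction: $Z\simeq\rcatMod D$ for the division ring $D=\End_X(P)$, so that every object of $Z$ is a direct sum of copies of $P$, and $\Ext^1_X(M,P)$ and $\Hom_X(M,P)$ are (right) $D$-modules via post-composition. Since the inclusion $i\colon Z\to X$ has a left adjoint $i^*$, it preserves products, so products of objects of $Z$ may be computed either in $Z$ or in $X$; in particular each $\prod_\alpha N_\alpha$ again lies in $Z$.

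First I would dispose of the case that $M$ is noetherian, which is where the hypotheses bite. Here $E:=\Ext^1_X(M,P)$ is finite-dimensional over $k$ by assumption, hence finite-dimensional over $D$, say $d=\dim_D E<\infty$. Form the universal extension $0\to P^{\oplus d}\to\underline M\xrightarrow{\pi}M\to 0$ classified by the tautological class in $\Ext^1_X(M,P^{\oplus d})\cong\End_D(E)$ corresponding to the identity. Applying $\Hom_X(-,P)$ and inspecting the connecting map $\delta\colon\Hom_X(P^{\oplus d},P)\to\Ext^1_X(M,P)=E$, the defining property of the tautological class is exactly that $\delta$ hits a $D$-basis of $E$; hence $\delta$ is surjective and the pullback $\pi^*\colon\Ext^1_X(M,P)\to\Ext^1_X(\underline M,P)$ vanishes. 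Because $M$ is noetherian, $\Ext^1_X(M,-)$ commutes with direct sums, so $\Ext^1_X(M,P^{(I)})=E^{(I)}$ and $\pi^*$ vanishes on $\Ext^1_X(M,K)$ for every $K=P^{(I)}\in Z$. Recalling that, by definition, $\pi$ is a $Z$-projective effacement precisely when $\pi^*$ vanishes on $\Ext^1_X(M,K)$ for all $K\in Z$ (lifting a quotient map through such an extension amounts to splitting the pulled-back extension), we conclude that $\pi$ is a $Z$-projective effacement of $M$ with kernel $R:=P^{\oplus d}\in Z$.

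From this effacement I would read off condition~(2) for noetherian $M$ directly. Since $\pi^*=0$ on $\Ext^1_X(M,K)$ for all $K\in Z$, the long exact sequence attached to $0\to R\to\underline M\to M\to 0$ gives a natural isomorphism $\Ext^1_X(M,K)\cong\coker\bigl(\Hom_X(\underline M,K)\to\Hom_X(R,K)\bigr)$. Now take $K=\prod_\alpha N_\alpha$. Using $\Hom_X(-,\prod_\alpha N_\alpha)=\prod_\alpha\Hom_X(-,N_\alpha)$ together with the exactness of products in the category of abelian groups, the product pulls out of the cokernel, yielding $\Ext^1_X(M,\prod_\alpha N_\alpha)\cong\prod_\alpha\coker(\Hom_X(\underline M,N_\alpha)\to\Hom_X(R,N_\alpha))=\prod_\alpha\Ext^1_X(M,N_\alpha)$, and one checks this identification is the natural comparison map. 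Thus condition~(2) holds for every noetherian $M$.

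The main obstacle is the passage to arbitrary $M$, and this is where I expect the real work to lie. For non-noetherian $M$ the finiteness of $\Ext^1_X(M,P)$ fails, so there is no finite universal extension, and, worse, $\Ext^1_X(M,-)$ no longer commutes with direct sums, so the reduction from $K=P$ to a general $K=P^{(I)}$ breaks down. My proposal is to write $M=\varinjlim_\lambda M_\lambda$ as the directed union of its noetherian subobjects (possible since $X$ is locally noetherian) and to deduce the product formula for $M$ from the already-established formula for each $M_\lambda$ by passing to the inverse limit over $\lambda$. The delicate point is the control of the derived inverse limits $\varprojlim^1_\lambda\Hom_X(M_\lambda,-)$ that mediate between $\Ext^1_X(\varinjlim M_\lambda,-)$ and $\varprojlim_\lambda\Ext^1_X(M_\lambda,-)$: one must show these $\varprojlim^1$ terms vanish (a Mittag--Leffler condition) and that the resulting limit is compatible with the products $\prod_\alpha$. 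It is precisely here that tininess of $P$ and the finite-dimensionality of $\Ext^1_X(M_\lambda,P)$ on noetherian objects should be leveraged, in order to bound the relevant $\Hom$- and $\Ext$-systems and force the Mittag--Leffler condition; making this interaction with the infinite products $\prod_\alpha N_\alpha$ precise is the crux of the argument.
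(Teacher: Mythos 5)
Your treatment of noetherian $M$ is correct, and it is essentially the paper's own argument (Proposition~\ref{prop:efface} together with Remark~\ref{rem:single}): the universal extension $0 \to P^{\oplus d} \to \underline{M} \to M \to 0$ built from a basis of $\Ext^1_X(M,P)$ is a $Z$-projective effacement, using that $\Ext^1_X(M,-)$ commutes with direct sums when $M$ is noetherian (Lemma~\ref{lem:extsumprod}(2), Lemma~\ref{lem:pe-props}(4)). The genuine gap is the passage to arbitrary $M$, which you explicitly leave open. You aim at condition (2) of Theorem~\ref{thm:mainthm2} for \emph{every} object, and propose to deduce it by writing $M = \varinjlim_\lambda M_\lambda$ over noetherian subobjects and controlling $\varprojlim^1$ terms via a Mittag--Leffler condition. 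This is not a routine detail that can be deferred: the directed set of noetherian subobjects is typically uncountable (so Mittag--Leffler does not even guarantee $\varprojlim^1$-vanishing, and higher derived limits intervene), there is no apparent reason the systems $\Hom_X(M_\lambda,K)$ should satisfy any such condition, and for non-noetherian $M$ one should not expect to verify the Ext-product isomorphism by a direct limit computation at all --- making that isomorphism accessible is precisely what the equivalences of Theorem~\ref{thm:mainthm2} are for.

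The repair is short, and you already hold all the ingredients: switch targets from condition (2) to condition (3) of Theorem~\ref{thm:mainthm2}, i.e.\ the existence of $Z$-projective effacements for every object. You have constructed effacements for all noetherian objects, hence for the noetherian generators of $X$. By Lemma~\ref{lem:pe-props}(2), a direct sum of effacements is an effacement of the direct sum (this rests on $\Ext^1(\bigoplus_\alpha M_\alpha, N) \cong \prod_\alpha \Ext^1(M_\alpha, N)$, Lemma~\ref{lem:extsumprod}(3), which needs no noetherian hypothesis), and by Lemma~\ref{lem:pe-props}(1) any epimorphic image of an object with an effacement again has one (pull back the given extension and lift). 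Since every object of $X$ is an epimorphic image of a direct sum of generators, every object has a $Z$-projective effacement (Lemma~\ref{lem:pe-props}(3)), and Theorem~\ref{thm:mainthm2} then yields that $Z$ is very well-closed. This is exactly the route the paper takes; your intermediate derivation of the product formula for noetherian $M$ via the cokernel description, while correct, becomes unnecessary once one works with condition (3) instead of condition (2).
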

\noindent 
The necessary hypothesis that  $\dim_k \Ext^1_X(M,P) < \infty$ for noetherian objects $M$ is automatic in many cases of interest, in particular 
for nice noncommutative projective schemes.  For example,  if $A$ is a connected graded noetherian algebra satisfying the Artin-Zhang $\chi_1$ condition, $A$ is generated in degree $1$ as an algebra, and $M$ is a point module for $A$, then $P = \pi(M) \in X = \rQgr A$ is a tiny simple satisfying the hypothesis above, so the corresponding closed point $Z$ is very well-closed in $X$.  See Section~\ref{sec:examples} for more details.

The structure of the paper is as follows.  In Section~\ref{sec:background} we review the basic theory of Grothendieck categories and their closed subcategories.  In Section~\ref{sec:effacements}, we work in a general abelian category and study $S$-projective effacements, as well as the dual concept of $T$-injective effacements, and show how they may be used to define adjoint pairs of functors.  Then in Section~\ref{sec:eff-groth} we specialize to a Grothendieck category, and prove some results about projective effacements in that particular setting.  
Section~\ref{sec:wellclosed} contains the proofs of Theorem~\ref{thm:mainthm1} and Theorem~\ref{thm:mainthm2}.  Finally, 
in Section~\ref{sec:examples} we show how the theory of the paper works out in specific examples of quasi-schemes, especially noncommutative projective schemes, and give the proof of Theorem~\ref{thm:mainthm3}.

To conclude the introduction, we briefly describe our work in progress that continues the ideas of this paper.
First, we plan to apply the theory of this paper to define the blowing up of a quasi-scheme along a well-closed subcategory in general, and study some of its properties.  This blowup should generalize both Van den Bergh's blowups of surfaces at points on commutative divisors, as well as the naive blowups defined by Keeler, Stafford, and the author.   As a corollary, we plan to show carefully that the coordinate rings of Van den Bergh's del Pezzo surfaces in \cite{VdB} are the same as the subrings of the Sklyanin algebra defined in \cite{Ro}.  An interesting new case we plan to study is the blowup at a point of the category $\rQgr S$ of a 4-dimensional Sklyanin algebra $S$.   We also want to study in more detail the structure of closed subcategories of $\rGr A$ and $\rQgr A$, with hope that this will allow us to prove that more general closed subcategories of noncommutative projective schemes $\rQgr A$ are well-closed.

\section*{acknowledgments}

We thank Michel Van den Bergh and Paul Smith for helpful conversations.

\section{Categorical Preliminaries}
\label{sec:background}
In this section we review some of the basic category theory we need in this paper.  The reader 
looking for more extensive background about the foundations of abelian categories may consult \cite{Mac}.  Many of the standard facts about Grothendieck categories can be found in \cite{Ga} and \cite{Po}, and basic material on closed subcategories is given in \cite{Sm1}.

We adopt standard set-theoretic foundations for categories $X$ in terms of Grothendieck universes.  See, for example, \cite[Section I.6]{Mac}.   We assume a large enough universe $U$ (a set of sets), where the sets in $U$ are called \emph{small}, and assume that $\Hom_X(M,N)$ is a small set for any objects $M,N \in X$.  Direct sums and products are indexed only over small sets.  The universe $U$ is chosen so that given a union $T = \bigcup_{\alpha \in A} S_{\alpha}$, where the index set $A$ is small and each set $S_{\alpha}$ is small, then $T$ is small.   Also, the power set of a set in $U$ is again in $U$.  The set of all objects in the category $X$ is not small in general, however.   We say 
that $X$ is \emph{well-powered} if the set of subobjects of any given object in $X$ is small.  Occasionally we will wish to assume that $X$ is a \emph{$k$-category} for some field $k$: this means that all Hom-sets $\Hom_X(M,N)$ are $k$-vector spaces, and that composition is $k$-bilinear.

While our main results are proved in the setting of Grothendieck categories, as we define shortly, some of our results work in the setting of more general abelian categories $X$.   We will always assume at least that $X$ is an abelian category satisfying Grothendieck's axioms (AB3) and (AB3*), that is, that $X$ is \emph{cocomplete} (coproducts of small-indexed families of objects exist in $X$) and \emph{complete} (products of small-indexed families of objects exist in $X$), respectively.  
We use the notation $\bigoplus_{\alpha} M_{\alpha}$ and $\prod_{\alpha} M_{\alpha}$ for the respective coproduct and product of a small set of objects $M_{\alpha}$, and we also typically refer to coproducts as direct sums.

Let $Z$ be a full subcategory of a complete, cocomplete, and well-powered abelian category $X$, and let $i_Z = i_* : Z \to X$ be the inclusion functor.   We say that $Z$ is \emph{closed under subquotients} if every subobject and quotient object of an object in $Z$ is also in $Z$.     Following Smith \cite[Definition 2.4]{Sm1}, we say that $Z$ is \emph{weakly closed} if $Z$ is closed under subquotients and $i_Z$ has a right adjoint $i_Z^! = i^!: X \to Z$, and we say that $Z$ is \emph{closed} if $Z$ is closed under subquotients and $i_Z$ has both a right adjoint $i_Z^!$ and a left adjoint $i_Z^* = i^*: X \to Z$.  (Van den Bergh uses different terminology in \cite{VdB}, referring to weakly closed subcategories as closed (following Gabriel), and closed subcategories as biclosed.)  
It is elementary to see that if $Z$ is closed under subquotients, then $Z$ is weakly closed if and only if 
$Z$ is closed under direct sums, and $Z$ is closed if and only if $Z$ is closed under both direct sums and products \cite[Proposition 3.4.3]{VdB}.  Moreover, if $Z$ is weakly closed, then the right adjoint $i^!: X \to Z$ can be described as follows: given $M \in X$, the object $i^!(M)$ is the unique largest subobject of $X$ which is in $Z$, and $i^!$ acts on morphisms by restriction.  Similarly, if $Z$ is closed, then the left adjoint $i^*$ acts on objects as $i^*(M) = M/N$, where $N$ is the unique smallest subobject of $M$ such that $M/N \in Z$, and $i^*$ sends a morphism to the induced morphism of quotient objects.  By adjointness, $i^!$ is left exact and $i^*$ is right exact.  

Recall that a small set of objects $\{ O_{\alpha} \}$ is a \emph{set of generators} for $X$ if for all objects $N,P \in X$ and morphisms $f_1, f_2 \in \Hom_X(N, P)$ with $f_1 \neq f_2$, there exists a generator $O_{\alpha}$ and a map $g: O_{\alpha} \to N$ such that $f_1 g \neq f_2 g$.  If the set of generators consists of a single object $O$ then it is called a \emph{generator} for $X$.    Assuming $X$ is cocomplete, it is standard that $\{ O_{\alpha} \}$ is a set of generators for $X$ if and only if $O = \bigoplus_{\alpha} O_{\alpha}$ is a generator.   It is also easy to see when $X$ is cocomplete that $\{ O_{\alpha} \}$ is a set of generators if and only if for every $M \in X$, there is a epimorphism from some direct sum of copies of objects in the set of generators to $M$ \cite[Proposition 2.8.2]{Po}. Additionally, it is a standard fact that if $X$ has a generator then $X$ is well-powered \cite[Proposition I.5]{Ga}. 

An abelian category $X$ is a \emph{Grothendieck category} if it satisfies Grothendieck's axiom (AB5) (that is, it is cocomplete 
and direct limits of exact sequences are exact), and $X$ has a generator (or equivalently, a set of generators).   A 
Grothendieck category $X$ automatically has a number of useful other properties: it satisfies Grothendieck's axiom (AB4) 
that direct sums of exact sequences are exact (since this is known to be a general consequence of (AB5) \cite[Corollary 2.8.9]{Po}), it is 
well-powered (since it has a generator), and it is complete (Grothendieck's (AB3*) \cite[Corollary 3.7.10]{Po}).   Also, every object has an injective hull; in particular, the category has enough injectives \cite[Theorem 3.10.10]{Po}.  However, $X$ need not have enough projectives, and products of exact sequences need not be exact, that is, the category need not satisfy Grothendieck's axiom (AB4*).  
An important example of a locally noetherian Grothendieck category is the category $\rQch Y$ of quasi-coherent sheaves on a 
quasi-projective scheme $Y$, which typically does not have enough projectives or exact direct products.  We are especially interested 
in noncommutative projective schemes, as mentioned in the introduction; in the last section of the paper we will show how the theory we develop applies to these and other more specific examples.

As usual, an object in an abelian category is \emph{noetherian} if it has the ascending chain condition on subobjects.
The category $X$ is \emph{locally noetherian} if $X$ has a (small) set of noetherian generators.  We will assume in our  main theorems later that $X$ is a locally noetherian Grothendieck category.   Here are some other important properties of such categories.
\begin{lemma}
\label{lem:small}
 Let $X$ be a locally noetherian Grothendieck category with small set of noetherian generators $\{ O_{\alpha} \}$.
\begin{enumerate}
\item Direct limits and direct sums of injective objects in $X$ are injective, and every injective object is a direct sum of indecomposable injective objects.
\item Every indecomposable injective of $X$ is isomorphic to an injective hull of some epimorphic image of a generator $\mc{O}_{\alpha}$; in particular, the set of isomorphism classes of indecomposable injectives in $X$ is small.
\item If $Z$ is a closed subcategory of $X$ with inclusion functor $i_*: Z \to X$, then $Z$ is a locally noetherian Grothendieck category in its own right, with small set of noetherian generators  $\{ i^*(O_{\alpha}) \}$.
\end{enumerate}
\end{lemma}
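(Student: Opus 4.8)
The three parts are standard facts about locally noetherian Grothendieck categories, and I would prove them in the order (1), (2), (3), using the noetherian generators $\{O_\alpha\}$ throughout to reduce statements about arbitrary objects to finitely generated data.

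For part (1), I would first show that an arbitrary direct sum $E = \bigoplus_i E_i$ of injectives is injective via a Baer-type criterion, which is available precisely because $X$ has a set of noetherian generators: it suffices to extend, along any subobject inclusion $N' \to N$ with $N$ noetherian, an arbitrary morphism $f \colon N' \to E$. The key point is that $f(N')$ is a noetherian subobject of $\bigoplus_i E_i$, and such a subobject lies in a finite subsum $\bigoplus_{i \in F} E_i$ (the intersections with the finite subsums form an ascending chain exhausting $f(N')$). Since a finite direct sum of injectives is injective, $f$ extends to $N$. For direct limits $E = \varinjlim_t E_t$ of injectives, the same idea applies: noetherian objects are finitely presented, so $\Hom(N', -)$ commutes with the filtered colimit, hence $f \colon N' \to E$ factors through some $E_t$; injectivity of $E_t$ extends it to $N$, and one composes back into $E$. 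For the decomposition statement, I would invoke Zorn's lemma to choose a maximal family $\{E_i\}$ of indecomposable injective subobjects of a given injective $E$ whose internal sum is direct. By the first part $\bigoplus_i E_i$ is injective, hence a direct summand, so $E = (\bigoplus_i E_i) \oplus C$. If $C \neq 0$, then since $C$ is injective and every nonzero object contains a nonzero uniform (noetherian) subobject by the ascending chain condition, $C$ contains the injective hull of such a subobject, an indecomposable injective summand, contradicting maximality; thus $C = 0$. These facts are also recorded in \cite{Ga} and \cite{Po}.

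For part (2), let $E$ be an indecomposable injective. Such an $E$ is \emph{uniform}: two nonzero subobjects with zero intersection would produce a direct summand and split $E$. Hence every nonzero subobject of $E$ is essential, so $E$ is an injective hull of each of them. Choosing a nonzero morphism $g \colon O_\alpha \to E$ from some generator (possible since the $O_\alpha$ generate $X$), its image $\im g \cong O_\alpha/\ker g$ is nonzero, hence essential, so $E$ is an injective hull of the epimorphic image $O_\alpha/\ker g$. For smallness, each noetherian $O_\alpha$ is well-powered, so its subobjects $\ker g$, and therefore its quotients up to isomorphism, form a small set; injective hulls are unique up to isomorphism and the index set of generators is small, so by the universe assumption the isomorphism classes of indecomposable injectives form a small set.

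For part (3), I would check the Grothendieck axioms for $Z$ in turn. Since $Z$ is full and closed under subquotients, kernels and cokernels of morphisms in $Z$ are computed in $X$ and again lie in $Z$, so $Z$ is abelian with structure inherited from $X$. As $i_*$ admits a right adjoint $i^!$, it preserves all colimits, so coproducts and direct limits in $Z$ agree with those in $X$; hence $Z$ is cocomplete and satisfies (AB5), since exactness and direct limits in $Z$ are detected in $X$. For generators, using that $i^*$ is left adjoint to the fully faithful $i_*$ gives $i^* i_* \cong \id$, so $i^*(M) \cong M$ for $M \in Z$. Given $M \in Z$, choose an epimorphism $\bigoplus_j O_{\alpha_j} \to M$ in $X$; applying the right exact, coproduct-preserving functor $i^*$ yields an epimorphism $\bigoplus_j i^*(O_{\alpha_j}) \to i^*(M) \cong M$ in $Z$, so $\{i^*(O_\alpha)\}$ generates $Z$. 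Finally each $i^*(O_\alpha) = O_\alpha/N_\alpha$ is a quotient of the noetherian object $O_\alpha$, hence noetherian, and noetherianity in $Z$ coincides with that in $X$ since subobjects agree; thus $\{i^*(O_\alpha)\}$ is a small set of noetherian generators and $Z$ is locally noetherian. The main obstacle is the decomposition into indecomposables in part (1): it is the only step that is not a formal diagram chase, as it combines the injectivity of infinite direct sums with the existence of uniform subobjects, both of which rest essentially on the noetherian hypothesis; the remaining arguments are formal consequences of having noetherian generators and of the adjunction properties of $i^*$ and $i^!$.
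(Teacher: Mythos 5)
Your proof is correct, and for parts (2) and (3) it follows essentially the same route as the paper: in (2) you pick a nonzero map $O_\alpha \to E$, observe its image is essential because an indecomposable injective is uniform, and get smallness from well-poweredness plus smallness of the index set of generators, exactly as the paper does (the paper leaves the uniformity point implicit); in (3) you verify abelianness, (AB5), and generation just as the paper does, differing only in that you produce the generating epimorphism $\bigoplus_j i^*(O_{\alpha_j}) \to M$ via right exactness and coproduct-preservation of the left adjoint $i^*$, whereas the paper argues that any map $O_\alpha \to M$ with $M \in Z$ factors through $i^*(O_\alpha)$ --- two equivalent characterizations of a set of generators, both recorded in the paper's Section 2. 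The real difference is part (1): the paper disposes of it entirely by citing Popescu (Theorems 5.8.7 and 5.8.11), while you supply the standard self-contained argument --- a Baer-type criterion against noetherian (sub)generators for direct sums and filtered colimits of injectives, and a Zorn's lemma argument combined with the existence of uniform noetherian subobjects for the decomposition into indecomposables. Your version makes visible exactly where local noetherianness enters, at the cost of length; the paper's citation buys brevity. One cosmetic slip: well-poweredness is a property of the category $X$ (following from the existence of a generator), not of the individual objects $O_\alpha$, and it has nothing to do with their noetherianness; this does not affect the argument.
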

\begin{proof}
(1).  See \cite[Theorem 5.8.7, Theorem 5.8.11]{Po}.   

(2).  Suppose that $I$ is indecomposable injective.  We may choose a nonzero map $f: O_{\alpha} \to I$ for some generator $\mc{O}_{\alpha}$.  Let $0 \neq M \subseteq I$ be the image of $f$.  Since $I$ is indecomposable injective, $I = E(M)$ must 
be an injective hull of $M$.   The last statement follows, since by well-poweredness the set of factor objects of a given object is small, and a small union of small sets is small.

(3).  The category $Z$ is clearly abelian since by definition it is closed under subquotients.   Since $Z$ is closed under direct sums and thus also under direct limits, it is clear that any direct limit in $X$ of objects in $Z$ is also a direct limit in the category $Z$.  Then since $X$ satisfies (AB5), so does $Z$.  Recall that $i^*(O_{\alpha})$ is the unique largest factor object of $O_{\alpha}$ in $Z$.  Given an object $M$ in $Z$ and a map $f: O_{\alpha} \to M$, the map factors through $i^*(O_{\alpha})$, and the statement about generators follows.
\end{proof}

Recall from the introduction that a \emph{closed point} of a locally noetherian Grothendieck category $X$ is a 
closed subcategory $Z$ which is equivalent to $\rcatMod D$ for some division ring $D$.  
We say that the locally noetherian Grothendieck category $X$ is \emph{locally finite} if $X$ has a set of generators which 
are both noetherian and artinian.  A closed point $Z$  is a locally finite Grothendieck category.
One way to get more locally finite categories is to join together closed points with the following construction.

\begin{definition}
Given a list of full subcategories $Z_1, Z_2, \dots, Z_n$ of an abelian category $X$, the \emph{Gabriel product} is the full subcategory $Z = Z_1 \cdot Z_2 \cdot \ldots \cdot Z_n$ consisting of objects $M$ with filtrations $0 = M_{n+1} \subseteq M_n \subseteq \dots \subseteq M_1 = M$ such that $M_i/M_{i+1} \in Z_i$ for all $1 \leq i \leq n$.
\end{definition}

\begin{lemma}  Let $X$ be complete, cocomplete, and well-powered.  If $Z_1, \dots, Z_n$ are closed subcategories of $X$, 
then $Z = Z_1 \cdot Z_2 \cdot \ldots \cdot Z_n$ is also closed.  
\end{lemma}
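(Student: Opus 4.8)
The plan is to reduce to the case $n = 2$ and then apply the criterion recorded above (see \cite[Proposition 3.4.3]{VdB}): a full subcategory closed under subquotients is closed if and only if it is closed under both direct sums and products. For the reduction I would argue by induction on $n$, using that the filtration in the definition is compatible with regrouping. If $M$ carries a filtration $0 = M_{n+1} \subseteq M_n \subseteq \dots \subseteq M_1 = M$ with $M_i/M_{i+1} \in Z_i$, then $M_n \in Z_n$ while $M/M_n$ inherits the truncated filtration exhibiting it as an object of $Z_1 \cdot \ldots \cdot Z_{n-1}$, and conversely; hence $Z_1 \cdot \ldots \cdot Z_n = (Z_1 \cdot \ldots \cdot Z_{n-1}) \cdot Z_n$. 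By the inductive hypothesis $Z_1 \cdot \ldots \cdot Z_{n-1}$ is closed, so it suffices to treat a product $Z_1 \cdot Z_2$ of two closed subcategories, where $M \in Z_1 \cdot Z_2$ exactly when $M$ has a subobject $N \in Z_2$ with $M/N \in Z_1$.

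Closure under subquotients is routine via the isomorphism theorems: given such $M$ and $N$, a subobject $L \subseteq M$ has $L \cap N \in Z_2$ and $L/(L\cap N) \cong (L+N)/N \subseteq M/N$ in $Z_1$, while a quotient $M \twoheadrightarrow M/L$ has $(N+L)/L \cong N/(N \cap L) \in Z_2$ with further quotient $M/(N+L) \in Z_1$; here I use only that $Z_1$ and $Z_2$ are closed under sub- and quotient objects.

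The delicate point is closure under direct sums and products, which I would handle using the exactness properties available in any complete, cocomplete abelian category, without invoking (AB4) or (AB4*). Let $\{M_\beta\}$ be a family in $Z_1 \cdot Z_2$ with subobjects $N_\beta \in Z_2$ and $M_\beta/N_\beta \in Z_1$. For the coproduct I would use that $\bigoplus$ is right exact (being a left adjoint), so applying it to $0 \to N_\beta \to M_\beta \to M_\beta/N_\beta \to 0$ yields an exact sequence $\bigoplus N_\beta \to \bigoplus M_\beta \to \bigoplus (M_\beta/N_\beta) \to 0$; letting $N$ be the image of the first map, $N$ is a quotient of $\bigoplus N_\beta \in Z_2$ and hence lies in $Z_2$, while $(\bigoplus M_\beta)/N \cong \bigoplus (M_\beta/N_\beta) \in Z_1$, so $\bigoplus M_\beta \in Z_1 \cdot Z_2$. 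Note I do not claim that $\bigoplus N_\beta$ is a subobject, only that its image is, which is exactly why right exactness suffices. Dually, $\prod$ is left exact, so $\prod N_\beta = \ker\!\bigl(\prod M_\beta \to \prod (M_\beta/N_\beta)\bigr)$ is a genuine subobject of $\prod M_\beta$ lying in $Z_2$, and the quotient $(\prod M_\beta)/(\prod N_\beta) \cong \im\!\bigl(\prod M_\beta \to \prod(M_\beta/N_\beta)\bigr)$ is a subobject of $\prod(M_\beta/N_\beta) \in Z_1$, hence lies in $Z_1$ since $Z_1$ is closed under subobjects; thus $\prod M_\beta \in Z_1 \cdot Z_2$.

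The main obstacle to watch is precisely this asymmetry between the two functors. Because products need not be right exact in the general (non-Grothendieck) setting of the lemma, the relevant quotient is only a subobject of $\prod(M_\beta/N_\beta)$ rather than equal to it, and the product argument closes only because $Z_1$ is closed under subobjects; dually, because coproducts need not be left exact, one must pass to the image of $\bigoplus N_\beta$ and rely on $Z_2$ being closed under quotients. Keeping track of which closure property of $Z_1$ versus $Z_2$ is invoked in each half is the one place where the verification is not purely formal.
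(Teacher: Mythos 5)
Your proof is correct, but note that the paper itself does not actually prove this lemma: its entire proof is the citation \cite[Proposition 3.3.6]{VdB}. Your argument is therefore genuinely different in that it is self-contained, resting only on the criterion (which the paper records from \cite[Proposition 3.4.3]{VdB}) that a full subcategory closed under subquotients is closed if and only if it is closed under direct sums and products. Your reduction to $n=2$ via the regrouping $Z_1 \cdot \ldots \cdot Z_n = (Z_1 \cdot \ldots \cdot Z_{n-1})\cdot Z_n$ is sound, the subquotient verification by the isomorphism theorems is standard, and---this is the real content---your treatment of the sums and products step is exactly the care this level of generality demands: since the lemma is stated for complete, cocomplete, well-powered abelian categories in which neither (AB4) nor (AB4*) is available, the map $\bigoplus N_\beta \to \bigoplus M_\beta$ need not be a monomorphism and the map $\prod M_\beta \to \prod (M_\beta/N_\beta)$ need not be an epimorphism, so replacing $\bigoplus N_\beta$ by its image (using that $Z_2$ is closed under quotients) and replacing $\prod(M_\beta/N_\beta)$ by the image of $\prod M_\beta$ inside it (using that $Z_1$ is closed under subobjects) is precisely what makes the two halves of the argument close up. What the paper's citation buys is brevity; what your argument buys is a transparent, elementary proof that makes visible exactly where each closure hypothesis on $Z_1$ and $Z_2$ is used, and that the statement needs nothing beyond the ambient exactness of left and right adjoints.
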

\begin{proof}
See \cite[Proposition 3.3.6]{VdB}.
\end{proof}

\begin{example}
Let $Z_1, Z_2, \dots, Z_n$ be a list of closed points, possibly with repeats, in the locally noetherian Grothendieck category $X$.
Then the category $Z = Z_1 \cdot Z_2 \cdot \ldots \cdot Z_n$ closed subcategory of $X$ by the previous lemma.  If $\{ O_{\alpha} \}$ 
is a set of noetherian generators for $X$, then $\{ i^*(O_{\alpha}) \}$ is a set of generators for $Z$, by Lemma~\ref{lem:small}.
It is clear that a noetherian object in $Z$ has finite length, so each $i^*(O_{\alpha})$ has finite length and $Z$ is a locally finite category.

Conversely, if $Z$ is a locally finite closed subcategory of a locally noetherian Grothendieck category $X$, and $Z$ is generated 
by finitely many objects of finite length whose composition factors are all tiny simples, then it is easy to see that $Z$ is a full subcategory of some Gabriel product $Z_1 \cdot Z_2 \cdot \ldots \cdot Z_n$ for some closed points $Z_i$.
\end{example}

In any Grothendieck category $X$, since $X$ has enough injectives, one has Ext groups $\Ext^i(M,N)$ for objects $M,N \in X$ and all $i \geq 0$, calculated using an injective resolution of $N$.   Since we assume that Hom sets are small, it is clear 
that $\Ext^i(M,N)$ is a small set for all $M,N$ and all $i \geq 0$.   There is also the usual correspondence between elements of $\Ext^1(M,N)$ and equivalence classes of short exact sequences $0 \to N \to P \to M \to 0$.   In the next result, we study how $\Ext$ interacts with directs sums and products in a Grothendieck category $X$.  Given a small index set $I$, let $\prod_I X$ be the 
category consisting of $I$-tuples of objects in $X$, and let $\prod: \prod_I X \to X$ be the functor given by 
$(M_{\alpha})_{\alpha \in I} \mapsto \prod_{\alpha \in I} M_{\alpha}$.  The functor $\prod$ is only left exact in general.  Since $X$ has enough injectives, so does $\prod_I X$, and thus we can define right derived functors $R^{i}\prod$ of the product functor.  
\begin{lemma}
\label{lem:extsumprod}
Let $X$ be a Grothendieck category.
\begin{enumerate}
\item Given $M$ and a family of objects $\{N_{\alpha} \}_{\alpha \in I}$ in $X$, the natural map 
 \[
\Ext^1(M, \prod N_{\alpha}) \overset{j}{\to} \prod \Ext^1(M, N_{\alpha})
\]
is an isomorphism if $R^1\prod(N_{\alpha}) = 0$.
\item Assume that $X$ is locally noetherian.  
Given a noetherian object $M$ and a directed system of objects $\{N_{\alpha} \}_{\alpha \in I}$ in $X$, for each $i \geq 0$ the natural map 
\[
\dirlim\ \Ext^i(M,N_{\alpha}) \to \Ext^i(M, \dirlim\ N_{\alpha}) 
\]
is an isomorphism.  Similarly, direct sums pull out of the second coordinate of $\Ext$.
\item Given a family of objects $\{ M_{\alpha} \}_{\alpha \in I}$ and $N$ in $X$, for each $i \geq 0$ there is an isomorphism
\[
\Ext^i(\bigoplus M_{\alpha}, N) \to \prod \Ext^i(M_{\alpha}, N).
\]
\end{enumerate}
\end{lemma}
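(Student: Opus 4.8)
The plan is to treat the three parts separately, in each case computing $\Ext^i$ from an injective resolution in the second variable and exploiting the fact that products and filtered colimits of abelian groups are exact. Part (3) is the most direct, so I would dispose of it first. Fix an injective resolution $N \to I^\bullet$ in $X$. Since the coproduct is characterized by the natural isomorphism $\Hom(\bigoplus_\alpha M_\alpha, -) \cong \prod_\alpha \Hom(M_\alpha, -)$, applying $\Hom(\bigoplus_\alpha M_\alpha, -)$ to $I^\bullet$ yields the complex $\prod_\alpha \Hom(M_\alpha, I^\bullet)$. Because products are exact in $\Ab$ (that is, $\Ab$ satisfies (AB4*)), cohomology commutes with the product, and taking $H^i$ gives $\Ext^i(\bigoplus_\alpha M_\alpha, N) \cong \prod_\alpha \Ext^i(M_\alpha, N)$; a routine check identifies this with the natural map. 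Note this part needs neither the locally noetherian hypothesis nor anything about $R^1\prod$.

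For part (1) the essential difficulty is that $\prod \colon \prod_I X \to X$ is only left exact, so one cannot simply resolve $\prod_\alpha N_\alpha$ by $\prod_\alpha I_\alpha^\bullet$. I would choose injective resolutions $N_\alpha \to I_\alpha^\bullet$; since the injectives of $\prod_I X$ are exactly the componentwise-injective tuples, $\{I_\alpha^\bullet\}$ is an injective resolution of $\{N_\alpha\}$ there, and hence $H^n(\prod_\alpha I_\alpha^\bullet) = (R^n\prod)(\{N_\alpha\})$. Each term $\prod_\alpha I_\alpha^n$ is injective (products of injectives are injective), and the hypothesis $R^1\prod(\{N_\alpha\}) = 0$, together with left exactness of $\prod$, makes $0 \to \prod_\alpha N_\alpha \to \prod_\alpha I_\alpha^0 \to \prod_\alpha I_\alpha^1 \to \prod_\alpha I_\alpha^2$ exact at the first two spots. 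Breaking this into short exact sequences shows that this complex of injectives computes $\Ext^1$, i.e. $\Ext^1(M, \prod_\alpha N_\alpha) \cong H^1(\Hom(M, \prod_\alpha I_\alpha^\bullet))$. Using $\Hom(M, \prod_\alpha I_\alpha^n) \cong \prod_\alpha \Hom(M, I_\alpha^n)$ and exactness of products in $\Ab$ once more yields $\prod_\alpha \Ext^1(M, N_\alpha)$, and naturality identifies the composite with $j$. Conceptually this is the edge map in the Grothendieck spectral sequence $\Ext^p(M, R^q\prod(\{N_\alpha\})) \Rightarrow \prod_\alpha \Ext^{p+q}(M, N_\alpha)$, whose five-term exact sequence shows $j$ is always injective and becomes an isomorphism once $E_2^{0,1} = \Hom(M, R^1\prod)$ vanishes; I would use the spectral sequence only as a guide and give the elementary truncation argument.

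Part (2) is where the locally noetherian hypothesis does the work, and I expect the main obstacle to lie here: to pass $\dirlim$ through cohomology I need a single \emph{directed system} of injective resolutions rather than unrelated resolutions of the individual $N_\alpha$. The case $i = 0$ is the assertion that noetherian objects are compact: in a locally noetherian Grothendieck category a noetherian object $M$ is finitely presented (a presentation $O^{(n)} \twoheadrightarrow M$ has noetherian, hence finitely generated, kernel), so $\Hom(M, -)$ commutes with directed colimits. For the higher $\Ext$, I would fix a functorial injective resolution $E^\bullet(-)$ on $X$ — one exists in any Grothendieck category, e.g. from an injective cogenerator $C$ via the functorial embedding $A \hookrightarrow C^{\Hom(A,C)}$ and iterated cokernels. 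Applied to the directed system $\{N_\alpha\}$ this produces a directed system of injective resolutions $E^\bullet(N_\alpha)$; by Lemma~\ref{lem:small}(1) directed colimits of injectives are injective, and by (AB5) the colimit $\dirlim_\alpha E^\bullet(N_\alpha)$ is exact, so it is an injective resolution of $\dirlim_\alpha N_\alpha$. Then $\Ext^i(M, \dirlim_\alpha N_\alpha) = H^i\Hom(M, \dirlim_\alpha E^\bullet(N_\alpha))$, and compactness of $M$ pulls the colimit out of $\Hom$ while exactness of filtered colimits in $\Ab$ pulls it out of $H^i$, giving $\dirlim_\alpha \Ext^i(M, N_\alpha)$. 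Finally the statement about direct sums follows by writing $\bigoplus_\alpha N_\alpha$ as the directed colimit of its finite partial sums and using that $\Ext^i$ commutes with finite biproducts in each variable.
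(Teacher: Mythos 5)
Your proposal is correct, and it diverges from the paper's proof in instructive ways. For part (1) the paper invokes the Grothendieck spectral sequence $E_2^{p,q} = \Ext^p(M, R^q\prod N_\alpha) \Rightarrow \prod \Ext^{p+q}(M, N_\alpha)$ (citing Roos) and then must verify, via a naturality diagram applied to coordinate projections, that the edge map $j$ in the five-term sequence coincides with the natural map; your elementary truncation argument --- resolve componentwise, use that injectives in $\prod_I X$ are the componentwise injectives, and observe that vanishing of $R^1\prod$ makes the truncated complex of injectives compute $\Ext^1$ --- reaches the same conclusion without the spectral sequence machinery, and the identification with $j$ is more transparent since the isomorphism is visibly induced by the projections. (The trade-off is that the paper's low-degree sequence gives slightly more, e.g.\ injectivity of $j$ unconditionally, which you also recover from left exactness.) For part (2) your proof is essentially the paper's --- direct limits of injectives are injective by Lemma~\ref{lem:small}(1), (AB5) makes the colimit of resolutions a resolution, and noetherianness of $M$ pulls the colimit out of $\Hom$ --- but you add a step the paper silently skips: the paper takes ``an injective resolution of each $N_\alpha$'' and then forms $\dirlim E^\bullet_\alpha$, which presupposes that the chosen resolutions form a directed system. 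Your device of a functorial injective resolution built from an injective cogenerator supplies exactly the compatibility needed, so on this point your write-up is more careful than the paper's. Part (3) and the direct-sum statement are handled the same way in both (yours reduces sums to filtered colimits of finite sums, the paper takes direct sums of resolutions directly; both are fine).
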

\begin{proof}
(1)  Note that the natural map arises as follows:  For each $\beta$ there is a projection $\pi_{\beta}: \prod_{\alpha} N_{\alpha} \to N_{\beta}$ and thus by functoriality of $\Ext$ a map $f_{\beta}: \Ext^1(M, \prod N_{\alpha}) \to \Ext^1(M, N_{\beta})$.  By the  universal property of the product, the $f_{\beta}$ give a map  $\Ext^1(M, \prod N_{\alpha}) \to \prod \Ext^1(M, N_{\alpha})$ as required.

There is a Grothendieck spectral sequence
\[
E^{p, q}_2 = \Ext^p_X(M, \textstyle R^{q}\prod_{\alpha}  N_{\alpha}) \implies \prod_{\alpha} \Ext^{p+q}_X(M, N_{\alpha})
\]
associated to the composition of the product functor $\prod: \prod_I X \to X$ and the functor $\Hom_X(M, -)$ \cite[Equation (1.2)]{Roos}.  
The associated exact sequence of low degree terms begins
\begin{equation}
\label{eq:low}
\xymatrix{
0 \ar[r] & \Ext^1(M, \prod_{\alpha} N_{\alpha}) \ar^j[r]  & \prod_{\alpha} \Ext^1(M, N_{\alpha}) \ar[r]  & \Hom(M, R^1\prod N_{\alpha}) \ar[r] & \dots}
\end{equation}
We claim that the map $j$ in this sequence is the same as the natural map.  For fixed $\beta$, apply naturality of the exact sequence \eqref{eq:low} to the morphism $h: (N_{\alpha}) \to (M_{\alpha})$ in $\prod_I X$, where $M_{\beta} = N_{\beta}$, $M_{\alpha} = 0$ for $\alpha \neq \beta$, and $h_{\beta}$ is the identity map.  Since $\prod_{\alpha} M_{\alpha} \cong N_{\beta}$, one obtains a diagram
\[
\label{eq:low-nat}
\xymatrix{
0 \ar[r] & \Ext^1(M, \prod_{\alpha} N_{\alpha}) \ar^j[r] \ar^f[d]   & \prod_{\alpha} \Ext^1(M, N_{\alpha}) \ar[r] \ar^g[d] & \dots \\ 
0 \ar[r] & \Ext^1(M, N_{\beta}) \ar^1[r] & \Ext^1(M, N_{\beta}) \ar[r] & \dots }
\]
where $f$ is the map on $\Ext$ induced by the projection $\prod_{\alpha} N_{\alpha} \to N_{\beta}$, and $g$ is the projection onto 
the $\beta$th coordinate.  The commutation of this diagram for all $\beta$ implies that the map $j$ is the same as the natural map as claimed.
Now the result follows immediately from \eqref{eq:low}.

(2) For each $\alpha$, let $0 \to N_{\alpha} \to E^0_{\alpha} \to E^1_{\alpha} \to \dots$ be an injective resolution of $N_{\alpha}$.
Since direct limits of injectives are injective in $X$ by Lemma~\ref{lem:small}(1), and direct 
limits are exact in a Grothendieck category, 
$0 \to \dirlim\ N_{\alpha} \to \dirlim\ E^0_{\alpha} \to \dirlim\ E^1_{\alpha} \to \dots$ is an injective resolution of 
$\dirlim\ N_{\alpha}$.  Then $\Ext^i(M, \dirlim\ N_{\alpha})$ is the $i$th homology of the 
sequence $0 \to \Hom(M, \dirlim\ E^0_{\alpha}) \to \Hom(M, \dirlim\ E^1_{\alpha}) \to \dots$.
 Since $M$ is noetherian, the natural map $\dirlim\ \Hom(M, P_{\alpha}) \to \Hom(M, \dirlim\ P_{\alpha})$
is an isomorphism for any directed system $\{ P_{\alpha} \}$ \cite[Proposition 3.5.10, Exercise 5.8.2]{Po}.
Thus we can take the $i$th homology of 
the sequence $0 \to \dirlim\ \Hom(M, E^0_{\alpha}) \to \dirlim\ \Hom(M, E^1_{\alpha}) \to \dots.$
This is the same as $\dirlim\ \Ext^i(M, N_{\alpha})$ since direct limits are exact in the category of abelian groups.
Thus $\Ext^i(M, \dirlim\ N_{\alpha}) \cong \dirlim\ \Ext^i(M, N_{\alpha})$ for all $i$.

The proof that $\Ext^i(M, \bigoplus N_{\alpha}) \cong \bigoplus \Ext^i(M, N_{\alpha})$ is analogous, since direct sums are exact, and direct sums of injectives are also injective by Lemma~\ref{lem:small}(1).

(3)  This is an easy argument using the isomorphism $\Hom(\bigoplus M_{\alpha}, N) \to \prod \Hom(M_{\alpha}, N)$ and the fact that products are exact in the category of abelian groups. 
\end{proof}

\section{Defining functors via effacements}
\label{sec:effacements}

In this and the next section we define and study some categorical constructions which form the technical heart of the paper.
While our primary interest is in Grothendieck categories, it is more natural to first present the results in the context of general abelian categories.
Then beginning in Section~\ref{sec:eff-groth}, we will specialize to the case of Grothendieck categories $X$ and study the more special 
features of that setting.

Let $X$ be an abelian category which is cocomplete, complete, and well-powered.  
Let $Z$ be a closed subcategory of $X$, and let $i_* : Z \to X$ be the inclusion functor.    Recall that this means that $i_*$ has right adjoint $i^!: X \to Z$, where $i^!(M)$ is the unique largest subobject of $X$ which is in $Z$, and left adjoint $i^*: X \to Z$, where $i^*(M)$ is the unique largest factor object $M/N$ in $X$ such that $M/N \in Z$.    Let $o_Z = i_* i^*: X \to X$ and $o^Z = i_* i^!: X \to X$.  
In particular, $o_X = o^X: X \to X$ is notation for the identity functor on $X$.

It is standard that if $\mc{C}$ is any category, then since $X$ is abelian, the category $\on{Fun}(\mc{C}, X)$ of functors from $\mc{C}$ to $X$, with morphisms being natural transformations, is also an abelian category, with ``pointwise" operations.   (There is a minor set-theoretic issue that the Hom-sets in $\on{Fun}(\mc{C}, X)$ are not necessarily small, unless $\mc{C}$ is a small category, that is, the set of objects in $\mc{C}$ is small.   We will allow functor categories to have Hom-sets in a larger universe than the one we use for our categories $X$ of main interest.)  When $\mc{C}$ is an additive category, then the subcategory $\on{Add}(\mc{C}, X)$ consisting of additive functors from $\mc{C} \to X$ is an abelian subcategory of $\on{Fun}(\mc{C}, X)$, again with pointwise operations.   There is a morphism $\eta: o_X \to o_Z$ in 
$\on{Add}(X, X)$, where $\eta_M: M \to  i_* i^*(M) = M/N$ is the natural quotient map for each $M$.  Then $\eta$ is an epimorphism in $\on{Add}(X, X)$, and we let $K$ be its kernel.  Thus $K$ is an additive functor such that $K(M) = N$, where $N$ is the unique smallest subobject of $M$ such that $M/N \in Z$, and $K$ acts on morphisms by restriction.   Let $\mc{R}(X, X)$ be the category of right exact functors from $X \to X$.  In general $\mc{R}(X, X)$ is not abelian, and although $o_X$ and $o_Z$ are right exact, $K$ need not be right exact.   More specifically, if $0 \to N \overset{f}{\to} M \overset{g}{\to} P \to 0$ is a short exact sequence in $X$, then it is easy to check that $K(f)$ remains a monomorphism and $K(g)$ an epimorphism, but $0 \to K(N) \to K(M) \to K(P) \to 0$ need not be exact in the middle, having homology $(K(M) \cap f(N))/f(K(N))$ there.  

Dually, there is a monomorphism $\rho: o^Z \to o_X$ in $\on{Add}(X, X)$ where $\rho_M: i_*i^!(M) \to M$ is the natural 
inclusion, identifying $i^!(M)$ with the largest subobject of $M$ which is in $Z$.  Let $C$ be the cokernel of $\rho$ in $\on{Add}(X, X)$, so 
$C$ acts on objects by $C(M) = M/i^!(M)$.  Similarly, the category of left exact functors $\mc{L}(X, X)$ is not necessarily abelian, and the functor $C$ preserves monomorphisms and epimorphisms but need not belong to $\mc{L}(X, X)$, even though $o_X$ and $o^Z$ do.  Our aim is to define and study ``corrected" versions of the functors $K$ and $C$ which have better exactness properties and in some circumstances form an adjoint pair.

\begin{definition}  Let $S$ be any collection of objects in an abelian category $X$.  An epimorphism $\pi: \underline{M} \to M$
is an \emph{$S$-projective effacement} of $M$ if given any epimorphism $f: P \to M$ with kernel in $S$, there exists $g: \underline{M} \to P$ such that $f g = \pi$.  If every object $M$ in $X$ has an $S$-projective effacement, then we say that $X$ \emph{has $S$-projective effacements}.   Dually, an \emph{$S$-injective effacement} of $M$ is an injection $\iota: M \to \overline{M}$ such that given any monomorphism $h: M \to Q$ with cokernel in $S$, there is $j: Q \to \overline{M}$ such that $j h =\iota$; we say that $X$ \emph{has $S$-injective effacements} if every object in $X$ has an $S$-injective effacement.
\end{definition}
\noindent  In this definition, we have anglicized the terms ``effacement projectif/injectif" which we believe 
were originally due to Grothendieck.  We have also added the dependence on $S$ (usually effacements are defined in the case $S = X$). 

Note that the kernel of an $S$-projective effacement of $M$ (or the cokernel of an $S$-injective effacement) is not required to be in $S$, though in the applications it will be convenient to have this additional property.  We have the following easy observation in the important special case that $S$ consists of objects in a closed subcategory $Z$ of $X$.
\begin{lemma}
\label{lem:kerinS}
Let $Z$ be a closed subcategory of the abelian category $X$, and let $S$ be some collection of objects in $Z$.
If $M \in X$ has an $S$-projective effacement, then $M$ has an $S$-projective effacement $\pi_M: \underline{M} \to M$ such that $\ker \pi_M \in Z$.  
Similarly, if $M$ has a $S$-injective effacement 
then $M$ has a $S$-injective effacement with cokernel in $Z$.
\end{lemma}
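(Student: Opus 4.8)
The plan is to prove the projective statement directly and then obtain the injective one by a dual argument. First I would start with an arbitrary $S$-projective effacement $\pi : \underline{M} \to M$, set $L = \ker \pi$, and recall that $K(L) \subseteq L$ is the smallest subobject with $L/K(L) \in Z$ (so that $L/K(L) = i^*(L)$ lies in $Z$). Since $K(L) \subseteq L = \ker \pi$, the map $\pi$ factors through the quotient $q : \underline{M} \to \underline{M}/K(L)$, producing an epimorphism $\pi' : \underline{M}/K(L) \to M$ whose kernel is $L/K(L) \in Z$. I would then claim that $\pi'$ is the required $S$-projective effacement with kernel in $Z$.

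Next I would verify the lifting property. Given an epimorphism $f : P \to M$ with $\ker f \in S$, I first lift along $\pi$: since $\pi$ is an $S$-projective effacement, there is $g : \underline{M} \to P$ with $fg = \pi$. To factor $g$ through $q$ it suffices to show $K(L) \subseteq \ker g$. Restricting $g$ to $L$, the relation $fg = \pi$ gives $f(g(L)) = \pi(L) = 0$, so $g(L) \subseteq \ker f$; as $\ker f \in S \subseteq Z$ and $Z$ is closed under subobjects, $g(L) \in Z$, whence $L/(L \cap \ker g) \cong g(L) \in Z$ and the minimality defining $K(L)$ forces $K(L) \subseteq L \cap \ker g \subseteq \ker g$. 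Thus $g$ descends to $g' : \underline{M}/K(L) \to P$, and $f g' q = fg = \pi = \pi' q$ together with $q$ epi yields $f g' = \pi'$. The crux of the whole argument — the one point where any care is needed — is exactly this observation that \emph{every} lift $g$ automatically annihilates $K(L)$; this is not really an obstacle but the heart of the matter, and it rests entirely on the universal (minimality) property defining $K$ (equivalently $i^*$) together with closure of $Z$ under subquotients.

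For the injective half I would dualize the construction. Starting from an $S$-injective effacement $\iota : M \to \overline{M}$, let $p : \overline{M} \to L = \coker \iota$ be the cokernel map and set $N = i^!(L)$, the largest subobject of $L$ lying in $Z$. Taking $\overline{M}' = p^{-1}(N)$, the map $\iota$ corestricts to $\iota' : M \to \overline{M}'$ with cokernel $\cong N \in Z$. Given any monomorphism $h : M \to Q$ with $\coker h \in S$, I would lift along $\iota$ to get $j : Q \to \overline{M}$ with $jh = \iota$; since $pjh = p\iota = 0$, the map $pj$ factors through $\coker h \in Z$, so $\im(pj)$ is a quotient of an object of $Z$ and hence lies in $Z$, forcing $\im(pj) \subseteq i^!(L) = N$ by maximality. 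Therefore $j(Q) \subseteq p^{-1}(N) = \overline{M}'$, so $j$ corestricts to $j' : Q \to \overline{M}'$ with $j' h = \iota'$, which completes the dual argument. As expected, the key step is again that any lift lands inside the prescribed subobject, now via the maximality of $i^!(L)$ and closure of $Z$ under quotients.
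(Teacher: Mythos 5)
Your proof is correct and follows essentially the same route as the paper: both quotient $\underline{M}$ by the smallest subobject $J \subseteq \ker\pi$ with $(\ker\pi)/J \in Z$ (i.e.\ use $i^*(\ker\pi)$, your $L/K(L)$) and dualize via $i^!(\coker\iota)$ for the injective case. The paper merely asserts the resulting map is ``easily seen'' to be an $S$-projective effacement and leaves the dual statement to the reader; your verification that every lift automatically kills $K(L)$ (resp.\ lands in $p^{-1}(N)$) is exactly the omitted detail, carried out correctly.
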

\begin{proof}
Let $M' \to M$ be an $S$-projective effacement, and let $0 \to L \to M' \to M \to 0$ be the corresponding exact sequence.   Now $i^*(L) = L/J$ is the largest factor object of $L$ in $Z$, the sequence  
\[
0 \to K = L/J \to \underline{M} = M'/J \to M \to 0
\]
is exact, and $\underline{M} \to M$ is easily seen to be an $S$-projective effacement, whose kernel $K$ is now in $Z$.   The statement for injective effacements is proved dually.
\end{proof}

Given epimorphisms $\pi_M: M \to M'$ and $\pi_N: N \to N'$, we say that a morphism $f: M \to N$ \emph{lifts} the morphism $g: M' \to N'$ if $g \circ \pi_M = \pi_N \circ f$.  Dually, given monomorphisms $\iota_M: M' \to M$ and $\iota_N: N' \to N$, we say that a morphism $f: M \to N$ 
\emph{extends} the morphism $g: M' \to N'$ if $ f \circ \iota_M = \iota_N \circ g$.  
Projective effacements have a general morphism lifting property (and injective effacements have a morphism extending property) which we give in the next result.  
\begin{lemma}
\label{lem:liftextend}
\begin{enumerate}
\item
Let $\pi_M: \underline{M} \to M$ be an $S$-projective effacement, and let $\pi_N: N' \to N$ be any epimorphism with kernel in $S$.  Given 
any morphism $f: M \to N$, there is a morphism $f': \underline{M} \to N'$ lifting $f$.  
\item  Let $\iota_N: N \to \overline{N}$ be a $T$-injective effacement, and let $\iota_M: M \to M'$ be any monomorphism with cokernel in $T$.  
Given any morphism $g: M \to N$, there is a morphism $g': M' \to \overline{N}$ extending $g$.
\end{enumerate}
\end{lemma}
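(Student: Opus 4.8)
The plan is to prove part (1) by a pullback construction and then obtain part (2) by the formally dual pushout argument. The idea is to convert the hypothesis on the kernel of $\pi_N$ into precisely the shape needed to invoke the defining lifting property of the $S$-projective effacement $\pi_M$.

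For part (1): given $f: M \to N$, I would first form the pullback $P$ of the pair $f: M \to N$ and $\pi_N: N' \to N$, obtaining projections $p: P \to M$ and $q: P \to N'$ with $\pi_N \circ q = f \circ p$. The crucial input is the standard fact that in an abelian category the pullback of an epimorphism along an arbitrary morphism is again an epimorphism, and moreover $\ker p \cong \ker \pi_N$. Since $\ker \pi_N \in S$ by hypothesis, this makes $p: P \to M$ an epimorphism whose kernel lies in $S$. I can therefore apply the defining property of the $S$-projective effacement $\pi_M: \underline{M} \to M$ to the epimorphism $p$, producing $g: \underline{M} \to P$ with $p \circ g = \pi_M$. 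Setting $f' = q \circ g: \underline{M} \to N'$, I then compute $\pi_N \circ f' = \pi_N \circ q \circ g = f \circ p \circ g = f \circ \pi_M$, which is exactly the condition that $f'$ lifts $f$.

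Part (2) is handled dually. Given $g: M \to N$, I would form the pushout $Q$ of $\iota_M: M \to M'$ and $g: M \to N$, with structure maps $q: M' \to Q$ and $j: N \to Q$ satisfying $j \circ g = q \circ \iota_M$. Dually to the above, since $\iota_M$ is a monomorphism with cokernel in $T$, the map $j: N \to Q$ is a monomorphism with $\coker j \cong \coker \iota_M \in T$. Applying the defining property of the $T$-injective effacement $\iota_N: N \to \overline{N}$ to the monomorphism $j$ yields $h: Q \to \overline{N}$ with $h \circ j = \iota_N$; then $g' = h \circ q: M' \to \overline{N}$ satisfies $g' \circ \iota_M = h \circ q \circ \iota_M = h \circ j \circ g = \iota_N \circ g$, so $g'$ extends $g$.

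The only substantive point, and the one I would state carefully, is the behavior of kernels and cokernels under pullback and pushout: the pullback of an epimorphism along any map is an epimorphism with isomorphic kernel, and dually the pushout of a monomorphism along any map is a monomorphism with isomorphic cokernel. This is a routine but essential fact about abelian categories, and it is precisely what transforms the hypotheses ``$\ker \pi_N \in S$'' and ``$\coker \iota_M \in T$'' into the input required by the effacement lifting and extension properties. Everything else is formal diagram chasing, so I do not anticipate any real obstacle beyond keeping the directions of the squares straight.
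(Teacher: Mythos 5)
Your proof is correct and takes essentially the same approach as the paper: the paper likewise forms the pullback $P$ of $f$ and $\pi_N$, observes (citing Rotman) that the induced epimorphism $P \to M$ has kernel $L = \ker \pi_N \in S$, applies the effacement property of $\pi_M$ to obtain $g: \underline{M} \to P$, and composes with the projection $P \to N'$ to get the lift, handling part (2) by duality. No gaps; your explicit statement of the kernel/cokernel preservation under pullback/pushout is exactly the point the paper delegates to its citation.
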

\begin{proof}
We prove only the first statement, since the second is proved dually.
We construct the commutative diagram
\[
\xymatrix{ &  & \underline{M} \ar[r]^{\pi_M} \ar[d]^g & M  \ar[d]^{=} &  \\
0 \ar[r] & L \ar[r]\ar[d]^{=} & P \ar[r]\ar[d]^h & M \ar[r]^q \ar[d]^f & 0 \\
0 \ar[r] & L \ar[r] & N' \ar[r]^{\pi_N} & N \ar[r] & 0 }
\]
as follows.
The bottom row is given, with $L = \ker \pi_N \in S$ by assumption.  The second row is formed by letting $P$ 
be the pullback of the maps $f$ and $\pi_N$; see \cite[Lemma 7.29]{Rot}.  The map $g$ exists since $\pi_M$ is 
an $S$-projective effacement.  Then $h g = f'$ is the required lift of $f$.
\end{proof}

We now show how to use projective and injective effacements to construct some important functors related to a closed subcategory of $X$.
\begin{proposition}
\label{prop:functors}
Let $X$ be a cocomplete, complete, and well-powered abelian category.  
Let $Z$ be a closed subcategory of $X$, and let the functors $i^!, i^*, C, K: X \to X$ be defined as above with respect to $Z$. 
\begin{enumerate}
\item Let $S$ be any collection of objects of $Z$.  For each $M \in X$, assume that there exists an $S$-projective effacement $\pi_M: \underline{M} \to M$ with kernel in $S$, and fix one such.   Define $F(M) = K(\underline{M})$.  For any morphism $f: M \to N$ in $X$, by Lemma~\ref{lem:liftextend} there exists a morphism $\underline{f}: \underline{M} \to \underline{N}$ lifting $f$.  Fix any such $\underline{f}$, and apply $K$ to give $F(f) = K(\underline{f}): F(M) \to F(N)$.  Then $F: X \to X$ is a functor which is independent up to natural isomorphism of the choices of projective effacements and lifts.  There is a canonical morphism of functors $\nu: F \to o_X$.
\item Let $T$ be any collection of objects of $Z$.   For each $M \in X$, assume that there exists an $T$-injective effacement $\iota_M: M \to \overline{M}$ which has cokernel in $T$, and fix one such.   Define $G(M) = C(\overline{M})$.  For any morphism $f: M \to N$ in $X$, by Lemma~\ref{lem:liftextend} there exists a morphism $\overline{f}: \overline{M} \to \overline{N}$ extending $f$.  Fix any such $\overline{f}$, and apply $C$ to give $G(f) = C(\overline{f}): G(M) \to G(N)$.  Then $G: X \to X$ is a functor which is independent up to natural isomorphism of the choices of injective effacements and extensions.  There is a canonical morphism of 
functors $\mu: o_X \to G$.
\item Suppose that both parts (1) and (2) apply, and that for each $M \in X$ we have 
$i^!(\overline{M}) \in S$ and $i^*(\underline{M}) \in T$.  Then the functors $(F,G)$ form an adjoint pair.
In particular, this holds if $S$ and $T$ are both equal to all objects in $Z$.  
\end{enumerate}
\end{proposition}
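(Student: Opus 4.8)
The plan is to prove that $F$ is left adjoint to $G$ by exhibiting a bijection $\Phi \colon \Hom_X(F(M),N) \to \Hom_X(M,G(N))$, with inverse $\Psi$, and checking it is natural in $M$ and $N$. Throughout I fix the chosen effacements $\pi_M \colon \underline{M} \to M$ (with $\ker \pi_M \in S$) and $\iota_N \colon N \to \overline{N}$ (with $\coker \iota_N \in T$), and I use the defining short exact sequences $0 \to F(M) \overset{j_M}{\to} \underline{M} \to i^*(\underline{M}) \to 0$ (since $F(M)=K(\underline{M})$ has quotient $i^*(\underline{M})$) and $0 \to i^!(\overline{N}) \to \overline{N} \overset{q}{\to} G(N) \to 0$ (since $G(N)=C(\overline{N})=\overline{N}/i^!(\overline{N})$).

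To build $\Phi$, I start from $\alpha \colon F(M) \to N$. Since $j_M$ is a monomorphism with cokernel $i^*(\underline{M}) \in T$ and $\iota_N$ is a $T$-injective effacement, Lemma~\ref{lem:liftextend}(2) produces an extension $\alpha' \colon \underline{M} \to \overline{N}$ of $\iota_N \alpha$, and I set $\Phi(\alpha)$ to be the morphism induced by $q\alpha'$. The key factoring claim is that $q\alpha'$ kills $\ker \pi_M$: since $\ker \pi_M \in S \subseteq Z$, its image under $\alpha'$ is a subquotient of an object of $Z$, hence a subobject of $\overline N$ lying in $Z$, hence inside $i^!(\overline{N})=\ker q$; thus $q\alpha'$ descends along $\pi_M$ to $\Phi(\alpha)\colon M \to G(N)$. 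Dually, for $\Psi$ I start from $\beta \colon M \to G(N)$, and because $\pi_M$ is an $S$-projective effacement while $q$ is an epimorphism with kernel $i^!(\overline{N}) \in S$, Lemma~\ref{lem:liftextend}(1) lifts $\beta$ to $\beta' \colon \underline{M} \to \overline{N}$ with $q\beta' = \beta\pi_M$. The dual factoring claim is that $\beta' j_M$ lands in $N$: composing into $\coker \iota_N \in T \subseteq Z$ gives a map $\underline{M} \to \coker\iota_N$ with image in $Z$, which by the universal property of $i^*$ factors through $i^*(\underline{M})$ and so vanishes on $F(M)$; hence $\beta' j_M$ corestricts to $\Psi(\beta) \colon F(M) \to N$. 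These two steps are exactly where the extra hypotheses $i^!(\overline{N}) \in S$ and $i^*(\underline{M}) \in T$ are consumed, and both become automatic when $S=T$ equals all of $Z$, giving the stated special case.

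Next I would check well-definedness and that $\Phi,\Psi$ are mutually inverse. Two extensions computing $\Phi$ differ by a morphism vanishing on $F(M)$, hence factoring through $i^*(\underline{M})\in Z$ and so with image inside $i^!(\overline{N})=\ker q$, so $q$ annihilates the difference; two lifts computing $\Psi$ differ by a morphism factoring through $i^!(\overline{N})\in Z$, which by the universal property of $i^*$ vanishes on $F(M)$. Thus both assignments are independent of the non-unique choices. Mutual inversion is then formal: given $\alpha$, the extension $\alpha'$ already satisfies $q\alpha' = \Phi(\alpha)\pi_M$, so it is a legitimate lift computing $\Psi(\Phi(\alpha))$, and cancelling the monomorphism $\iota_N$ returns $\alpha$; symmetrically, a lift $\beta'$ is a legitimate extension computing $\Phi(\Psi(\beta))$, and cancelling the epimorphism $\pi_M$ returns $\beta$.

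Finally I would verify naturality. For $\gamma \colon N \to N'$ I would compute $G(\gamma)=C(\overline{\gamma})$ with $\overline{\gamma}\colon\overline{N}\to\overline{N'}$ extending $\gamma$, observe $\overline{\gamma}\alpha'$ is a valid extension for $\Phi(\gamma\alpha)$, and conclude $\Phi(\gamma\alpha)=G(\gamma)\Phi(\alpha)$ after cancelling $\pi_M$; naturality in $M$ is symmetric, using a lift $\underline{\phi}\colon\underline{M'}\to\underline{M}$ computing $F(\phi)$ together with the identity $j_M F(\phi)=\underline{\phi}\,j_{M'}$. The main obstacle is not conceptual but bookkeeping: because $F$ and $G$ exist only after fixing non-canonical effacements, lifts, and extensions, every verification above must be reduced to one of the two factoring facts — that a morphism into $G(N)$ through $\ker\pi_M$ vanishes, and that a morphism out of $F(M)$ into $\coker\iota_N$ vanishes — after which each step is just cancellation of $\iota_N$ or $\pi_M$. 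Establishing those two facts cleanly (and hence controlling the ambiguity in all the choices) is the real content of the argument.
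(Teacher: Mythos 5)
Your argument for the adjointness itself is correct, and it is in substance the same as the paper's proof of part (3): the paper mediates through the set $\Hom(\underline{M}, \overline{N})$, defining two surjections $\phi(h) = K(h)$ (corestricted to $N$, using $K(\overline{N}) \subseteq N$) and $\psi(h) = C(h)$ (precomposed with $M \twoheadrightarrow C(\underline{M})$), showing both have the same kernel $\{h : \im(h) \in Z\}$, and taking $\eta_{M,N} = \wt{\psi}\circ\wt{\phi}^{-1}$. Your $\Phi$ is exactly $\psi \circ \phi^{-1}$ (your ``extension'' step produces a $\phi$-preimage of $\alpha$, your descent along $\pi_M$ is $\psi$), and your two ``factoring facts'' are precisely the paper's computation of $\ker\phi = \ker\psi$; you have merely packaged the bijection as a pair of mutually inverse maps rather than as a common quotient of the middle Hom-set. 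Your identification of where $i^!(\overline{N}) \in S$ and $i^*(\underline{M}) \in T$ are consumed also matches the paper.

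The genuine gap is that you have proved only part (3) of the proposition while silently assuming parts (1) and (2), which are assertions requiring proof: that $F(f) = K(\underline{f})$ and $G(f) = C(\overline{f})$ are \emph{independent of the chosen lift/extension} (without this, $F$ and $G$ are not even well-defined on morphisms), that $F$ and $G$ respect composition and identities, that different choices of effacements yield naturally isomorphic functors, and that the canonical morphisms $\nu: F \to o_X$ and $\mu: o_X \to G$ exist. This omission is not cosmetic, because your own part-(3) argument leans on it: your naturality-in-$M$ step invokes the identity $j_M F(\phi) = \underline{\phi}\, j_{M'}$ for \emph{an arbitrary} lift $\underline{\phi}$ of $\phi$, which is exactly the independence-of-lifts statement. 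The missing arguments are of the same flavor as your factoring facts --- e.g., two lifts $g_1, g_2$ of $f$ satisfy $\pi_N(g_1 - g_2) = 0$, so $\im(g_1 - g_2) \in Z$ and hence $K(\underline{M}) \subseteq \ker(g_1 - g_2)$, giving $K(g_1) = K(g_2)$; independence of effacements then follows by lifting identity maps both ways, and $\nu_M = \pi_M\big\vert_{K(\underline{M})}$ gives the canonical morphism --- so the gap is fixable with tools you already have, but as written the proof is incomplete relative to the statement.
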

\begin{proof}
(1)  First we show that $F(f)$ does not depend on the choice of lift $\underline{f}$.  Suppose that we have two different lifts $g_1, g_2: \underline{M} \to \underline{N}$ of $f$.  Then $\pi_N \circ (g_1-g_2) = 0$, and so in particular $\im (g_1 -g_2) \cong \underline{M}/\ker (g_1 - g_2) \in Z$.  Since $F(M)$ is the smallest subobject $M'$ of $\underline{M}$ such that $\underline{M}/M' \in Z$, we see that $F(M) \subseteq \ker (g_1 - g_2)$, and $g_1 \big\vert_{F(M)} =   g_2 \big\vert_{F(M)}$.  Thus $K(g_1) = K(g_2)$, and so $F(f)$ is independent of the choice of lift.   The independence of the choice of lifts also easily implies that $F$ is a functor:   given $f: M \to N$ and $h: N \to P$, we first choose lifts $\underline{f}: \underline{M} \to \underline{N}$ and $\underline{h}: \underline{N} \to \underline{P}$, and then we may choose $\underline{h} \circ \underline{f}: \underline{M} \to \underline{P}$ as our lift of $h \circ f$,
from which follows 
\[
F(h \circ f) = K(\underline{h} \circ \underline{f}) = K(\underline{h}) \circ K(\underline{f}) = F(h) \circ F(f),
\]
since $K$ is a functor.   Similarly, choosing the identity map $1_{\overline{M}}: \overline{M} \to \overline{M}$ as our lift of the indentity map $1_M: M \to M$ implies that $F(1_M) = K(1_{\overline{M}}) = 1_{F(M)}$.  

Next we show that the definition of $F$ is independent (up to natural isomorphism) of the arbitrary choices of projective effacements.  Suppose that for each object $M$ we choose an $S$-projective effacement $\pi'_M: M' \to M$ (also with kernel in $S$), and use these to define a functor $F'$ in the same way.  By Lemma~\ref{lem:liftextend}, there are maps $g: \underline{M} \to M'$ and $h: M' \to \underline{M}$ which lift the identity map $1_M$.  Then $g \circ h: M' \to M'$ is a lift of the identity map $1_{M'}$; since the identity map $1_{M'}: M' \to M'$ is also a lift, by the independence of lifts proved above, $K(g) \circ K(h) = K(g \circ h) = K(1_{M'}) = 1_{F'(M)}$ is the identity.  Similarly, $K(h) \circ K(g) = K(h \circ g)  = K(1_{\underline{M}}) = 1_{F(M)}$.  Thus $\eta_M = K(g): F(M) \to F'(M)$ is an isomorphism for each $M$.  To see that $\eta$ is natural, if $f: M \to N$ is a morphism, choose lifts $\underline{f}: \underline{M} \to \underline{N}$ and $f': M' \to N'$ of $f$ by Lemma~\ref{lem:liftextend}.  Choose $g_1: \underline{M} \to M'$ lifting $1_M$ and $g_2: \underline{N} \to N'$ lifting $1_N$ as above.  Then $f' \circ g_1$ and $g_2 \circ \underline{f}: \underline{M} \to N'$ both lift $f$.   By the independence 
of lifts, 
\[
F'(f) \circ \eta_M = K(f') \circ K(g_1) = K(f' \circ g_1) = K(g_2 \circ \underline{f}) = K(g_2) \circ K(\underline{f}) = \eta_N \circ F(f),
\]
as required.   Thus $\eta: F \to F'$ is a natural isomorphism of functors.

Finally, for each object $M$ there is a morphism $\nu_M: F(M) \to M$ given by 
$\nu_M = \pi_M \big\vert_{K(\underline{M})}$.  The maps $\nu_M$ are natural, since 
if $f: M \to N$ then we have $f \circ \pi_M = \pi_N \circ \underline{f}$ by construction, and $K$ is a functor.
Thus we have a morphism of functors $\nu: F \to o_X$.

(2)  This is completely dual to part (1), so the proof is omitted.

(3)  For any fixed objects $M, N \in X$ consider the diagram
\[
\xymatrix{ \Hom(FM, N)  & \Hom(\underline{M}, \overline{N}) \ar[l]_{\phi} \ar[r]^{\psi}  & \Hom(M, GN),
 }
\]
where the maps $\phi$ and $\psi$ are defined as follows.   Given $h \in \Hom(\underline{M}, \overline{N})$, applying $K$ gives 
a map $K(h): F(M) = K(\underline{M}) \to K(\overline{N})$.  Use the injection $\iota_N: N \to \overline{N}$ to identify $N$ with 
a subobject of $\overline{N}$.  Since $\overline{N}/N \in Z$, we have $K(\overline{N}) \subseteq N$ because $K(\overline{N})$ is 
the smallest such subobject.  Thus $\on{im}(K(h)) \subseteq N$ and so $K(h)$ defines a morphism $\phi(h): F(M) \to N$.  Similarly, applying $C$ to $h$ gives $C(h): C(\underline{M}) \to C(\overline{N}) = G(N)$.  Since the kernel of $\pi_M: \underline{M} \to M$ is in $Z$, there is an epimorphism $M \to C(\underline{M})$ and $C(h)$ induces a map $\psi(h): M \to G(N)$.

There is an epimorphism $\overline{N} \to G(N)$ whose kernel $i^!(\overline{N})$ is in $S$ by hypothesis.  Thus by
Lemma~\ref{lem:liftextend}, given $g \in \Hom(M, GN)$, there 
is $\underline{g} \in \Hom(\underline{M}, \overline{N})$ lifting $g$.  This is equivalent to $\psi(\underline{g}) = g$.  Thus $\psi$ is surjective.  A dual argument using the assumption that $i^*(\underline{M}) \in T$ shows that $\phi$ is surjective.

Now suppose that $h \in \ker(\phi)$.  This means that $h(F(M)) = 0$, and in particular, $\im(h)$ is isomorphic to a factor 
object of $\underline{M}/F(M)$.  Thus $\im(h) \in Z$.  Conversely, if $\im(h) \in Z$, then $\underline{M}/\ker(h) \in Z$ and so 
$F(M) \subseteq \ker(h)$ since $F(M)$ is the smallest such subobject.  In conclusion, $\ker(\phi)$ consists of those $h$ whose 
image is in $Z$.  A similar argument shows that $\ker(\psi)$ has the same description, so $\ker(\phi) = \ker(\psi)$.  Thus there are induced bijections 
$\wt{\phi}: \Hom(\underline{M}, \overline{N})/(\ker \phi) \to \Hom(FM, N)$ and 
$\wt{\psi}: \Hom(\underline{M}, \overline{N})/(\ker \psi) \to \Hom(M, GN)$ and 
$\eta_{M,N}: \wt{\psi} \circ \wt{\phi}^{-1}: \Hom(FM, N) \to \Hom(M, GN)$ is a bijection.

A diagram chase similar to the arguments already given shows that these isomorphisms $\eta_{M,N}$ are natural in $M$ and $N$, so $(F, G)$ is an adjoint pair as claimed.  
\end{proof}

When the category $X$ has enough injectives, the constructions above involving injective effacements can be described in 
a much simpler way; dually, if the category has enough projectives then all of the results above using projective effacements become simplified.  We make this precise in the next results.

Recall that $\mc{L}(X, X)$ denotes the category of left exact functors $X \to X$, with morphisms given by natural transformations.  Suppose that $X$ has enough injectives.   Then it is standard that $\mc{L}(X, X)$ is an abelian category, in the following way.  There is an equivalence of categories $\gamma: \operatorname{Add}(\operatorname{Inj}(X), X) \to \mc{L}(X, X)$ where $\operatorname{Inj}(X)$ is the full subcategory of $X$ consisting of injective objects \cite[Proposition 3.1.1(3)]{VdB}.  Explicitly, given an additive functor $G': \operatorname{Inj}(X) \to X$, for each $M \in X$ one takes the beginning of an injective resolution $M \to E_0 \to E_1 \to \dots$ in $X$ and defines $G(M) = \ker  (G'(E_0) \to G'(E_1))$.  Also, a morphism $f: M \to N$ induces a morphism $G(M) \to G(N)$ by taking any lift of $f$ to a morphism of the injective resolutions, applying $G'$, and taking the induced map of the kernels.  It is straightforward to check that this defines a left exact functor $G$ 
which is independent up to natural isomorphism of the choices involved.  As we noted in the previous section, $\on{Add}(\operatorname{Inj}(X), X)$ is automatically abelian by taking objectwise kernels and cokernels.  Thus $\mc{L}(X, X)$ is also an abelian category via the equivalence $\gamma$.  It is easy to see that in the category $\mc{L}(X, X)$, kernels are still computed objectwise, but cokernels are not, in general.  Analogously, if $X$ has enough projectives, the the category $\mc{R}(X, X)$ of right exact functors $X \to X$ is abelian, since it is equivalent to the category $\operatorname{Add}(\on{Pro}(X), X)$; cokernels in $\mc{R}(X, X)$ are computed objectwise, but not kernels in general.

\begin{lemma}
\label{lem:coker}
Let $X$ be an abelian category which is complete, cocomplete, and well-powered.  Let $Z$ be a closed subcategory of $X$.  Suppose that $X$ has enough injectives; in particular $X$ automatically has $Z$-injective effacements, and for each $M \in X$ 
we can fix such an effacement $M \to \overline{M}$ with cokernel in $Z$, by Lemma~\ref{lem:kerinS}.
Let $G = G_Z$ be the functor defined by Proposition~\ref{prop:functors}(2) with $T = Z$, and let $\mu = \mu_Z: o_X \to G$ be the corresponding morphism.
Then there is an exact sequence
\[
0 \to o_Z \overset{\theta}{\to} o_X \overset{\mu}{\to} G \to 0
\]
in $\mc{L}(X, X)$, where $\theta: o_Z \to o_X$ is the natural morphism.
\end{lemma}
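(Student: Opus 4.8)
The plan is to reduce the whole statement to the behaviour of the three functors on injective objects, via the equivalence $\gamma: \on{Add}(\on{Inj}(X),X) \to \mc{L}(X,X)$ recalled just above. Its quasi-inverse is restriction to $\on{Inj}(X)$, and since $\gamma$ is an equivalence of abelian categories it is exact in both directions; hence a sequence in $\mc{L}(X,X)$ is exact exactly when its restriction to $\on{Inj}(X)$ is objectwise exact in $\on{Add}(\on{Inj}(X),X)$. First I would record the easy ingredients. The term $o_X$ is the identity functor, hence exact. The left-hand term, written $o_Z$ in the statement (matching the introduction), is the largest-subobject functor $o^Z = i_* i^!$ of Section~\ref{sec:effacements}, and $\theta$ is the natural monomorphism $\rho: o^Z \to o_X$, $\rho_M: i^!(M) \hookrightarrow M$; it is left exact because $i^!$ is a right adjoint and $i_*$ is exact, and $\theta$ is a monomorphism in $\mc{L}(X,X)$ because there kernels are computed objectwise and each $\theta_M$ is the inclusion of a subobject.

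The heart of the argument is to identify $G$ with $\gamma(C|_{\on{Inj}(X)})$, where $C(N) = N/i^!(N)$ as in Section~\ref{sec:effacements}; equivalently, to show that $G$ is the cokernel of $\theta$ formed inside $\mc{L}(X,X)$ (that cokernel corresponds under $\gamma$ to the objectwise cokernel $E \mapsto E/i^!(E)$ of $\theta|_{\on{Inj}(X)}$). On an injective object $E$ this is immediate: the identity $E \to E$ is a $Z$-injective effacement, since any monomorphism out of the injective $E$ splits, and its cokernel $0$ lies in $Z$; so by the independence of $G$ from the chosen effacement (Proposition~\ref{prop:functors}(2)) one computes $G(E) = C(E) = E/i^!(E)$, with $\mu_E$ the quotient map. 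For general $M$ I would take the start $0 \to M \to E_0 \overset{d}{\to} E_1$ of an injective resolution and compare $G(M) = \overline{M}/i^!(\overline{M})$, where $M \subseteq \overline{M} \subseteq E_0 = E(M)$ with $\overline{M}/M = i^!(E_0/M)$ (Lemma~\ref{lem:kerinS}), against $\gamma(C|_{\on{Inj}(X)})(M) = \ker\big(C(E_0) \to C(E_1)\big)$. This reduces to two subobject identities: $\overline{M} = d^{-1}(i^!(E_1))$, which follows from $\ker d = M$ together with the general fact $i^!(W) = W \cap i^!(Y)$ for subobjects $W \subseteq Y$ (applied to $W = \on{im} d \cong E_0/M$); and $i^!(\overline{M}) = i^!(E_0)$, which holds because the image of $i^!(E_0)$ in $E_0/M$ is a quotient of an object of $Z$, hence lies in $i^!(E_0/M) = \overline{M}/M$, forcing $i^!(E_0) \subseteq \overline{M}$. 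These give $\ker\big(C(E_0) \to C(E_1)\big) = \overline{M}/i^!(E_0) = \overline{M}/i^!(\overline{M}) = G(M)$, and a routine check on lifts shows the isomorphism is natural and sends $\mu$ to the canonical map coming from $\gamma$. In particular $G$ is left exact, so the sequence does live in $\mc{L}(X,X)$.

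With $G \cong \gamma(C|_{\on{Inj}(X)})$ established, I would conclude formally. Applying the quasi-inverse of $\gamma$ restricts the sequence to $0 \to i^!(E) \to E \to E/i^!(E) \to 0$ on each injective $E$, which is the canonical short exact sequence in $X$; thus the restriction is objectwise exact in $\on{Add}(\on{Inj}(X),X)$, and applying the exact equivalence $\gamma$ returns the exactness of $0 \to o_Z \overset{\theta}{\to} o_X \overset{\mu}{\to} G \to 0$ in $\mc{L}(X,X)$. I expect the comparison in the middle paragraph to be the only real obstacle: everything else (left exactness of $o_X$ and $o_Z$, the monomorphism property of $\theta$, and exactness on injectives) is formal, but verifying that the effacement-based functor $G$ of Proposition~\ref{prop:functors}(2) coincides with the left exact functor obtained by ``embed in an injective resolution, apply $C$, take the kernel'' requires the subobject bookkeeping above, and a little extra care because $o_Z$ here is the largest-subobject functor, so $\theta$ is the inclusion $\rho$ rather than the quotient $\eta$.
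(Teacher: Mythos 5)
Your proposal is correct and follows essentially the same route as the paper's proof: both reduce to the full subcategory of injectives via the equivalence $\gamma$, observe that $1_E$ is a $Z$-injective effacement of any injective $E$ (so $G(E) = C(E) = E/i^!(E)$ with $\mu_E$ the quotient map), and conclude that $\mu$ is the cokernel of the monomorphism $\theta$ in $\mc{L}(X,X)$, whence the exactness. The one difference is that your middle paragraph explicitly checks, via the subobject identities $\overline{M} = d^{-1}(i^!(E_1))$ and $i^!(\overline{M}) = i^!(E_0)$, that the effacement-defined $G$ agrees with $\gamma(C|_{\operatorname{Inj}(X)})$ on all objects -- a point the paper leaves implicit when it invokes the principle that a left exact functor is determined by its restriction to injectives -- so your write-up is, if anything, slightly more complete than the paper's.
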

\begin{proof}
The natural transformation $\theta$ is a monomorphism in $\mc{L}(X, X)$ because $\theta_M: o_Z(M) \to o_X(M)$ is a monomorphism in $X$ for all objects $M$.  Let $\mu': o_X \to G'$ be the  cokernel of $\theta$ in $\mc{L}(X, X)$.  By the constructions 
outlined above, a left exact functor is uniquely determined by its restriction to the full subcategory of injective objects.  For an injective $I$ we have $G'(I) = \coker (i^!(I) \to I) = I/i^!(I)$.  Since $I$ is injective, it is clear that the 
identity map $1_I: I \to I$ is a $Z$-projective effacement of $I$.  Thus by the definition of the functor $G$, we have 
$G(I) = C(I) = G'(I)$, where $C$ is defined as in the previous section, and by construction the morphism $\mu: o_X \to G$ 
defined in Proposition~\ref{prop:functors}(2) is also given on $I$  by the natural map $I \to I/i^!(I)$.  
Thus $\mu = \mu'$, and $\mu$ is a cokernel of $\theta$ as required.
\end{proof}
\noindent 
The previous result shows that if $X$ has enough injectives, to construct the functor $G_Z$ for any closed subcategory 
$Z$ we can simply take a cokernel of $\theta_Z: o_Z \to o_X$ in $\mc{L}(X, X)$.  It is not necessarily to use injective effacements at all.

For completeness, we state without proof the dual result we get when the category $X$ has enough projectives.
\begin{lemma}
\label{lem:ker}
Let $X$ be an abelian category which is complete, cocomplete, and well-powered.  Let $Z$ be a closed subcategory of $X$.  Suppose that $X$ has enough projectives; in particular $X$ automatically has $Z$-projective effacements, and for each $M \in X$ 
we can fix such an effacement $\underline{M} \to M$ with cokernel in $Z$, by Lemma~\ref{lem:kerinS}.
Let $F = F_Z$ be the functor defined by Proposition~\ref{prop:functors}(2), and let $\nu = \nu_Z: F \to o_X$ be the corresponding morphism.
Then there is an exact sequence
\[
0 \to F \overset{\nu}{\to} o_X \overset{\rho}{\to} o^Z \to 0
\]
in $\mc{R}(X, X)$, where $\rho: o_X \to o^Z$ is the natural morphism. 
\end{lemma}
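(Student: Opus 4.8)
The plan is to carry out the exact dual of the proof of Lemma~\ref{lem:coker}, replacing injective objects by projective objects throughout: the category $\mc{L}(X,X)$ of left exact functors becomes the category $\mc{R}(X,X)$ of right exact functors, monomorphisms become epimorphisms, the subobject functor $i_*i^!$ becomes the factor object functor $i_*i^*$, and the functor $C$ becomes $K$. Since $X$ is assumed to have enough projectives, $\mc{R}(X,X)$ is abelian via the equivalence $\gamma'\colon \operatorname{Add}(\on{Pro}(X),X) \to \mc{R}(X,X)$, so a right exact functor is determined up to natural isomorphism by its restriction to the full subcategory $\on{Pro}(X)$ of projectives. In $\mc{R}(X,X)$ cokernels are objectwise but kernels are not in general; however, under $\gamma'$ a kernel corresponds to the objectwise kernel in $\operatorname{Add}(\on{Pro}(X),X)$, so it is objectwise when evaluated on projectives. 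Here $\rho\colon o_X \to o^Z$ is the natural quotient map whose objectwise kernel is the functor $K$ of Section~\ref{sec:effacements}, so that $o^Z(M)=i_*i^*(M)=M/K(M)$.

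First I would observe that $\rho$ is an epimorphism in $\mc{R}(X,X)$, since $\rho_M\colon o_X(M)\to o^Z(M)$ is an epimorphism in $X$ for every $M$ (this is dual to the statement that $\theta$ is a monomorphism in Lemma~\ref{lem:coker}). Let $\nu'\colon F'\to o_X$ denote the kernel of $\rho$ in $\mc{R}(X,X)$, which exists and is right exact by construction. Restricting to a projective object $P$, this kernel agrees with the objectwise one, so $F'(P)=\ker(\rho_P)=K(P)$ and $\nu'_P\colon K(P)\to P$ is the natural inclusion. On the other hand, because $P$ is projective every epimorphism onto $P$ admits a section, so the identity $1_P$ is a $Z$-projective effacement of $P$; taking $\underline{P}=P$ in Proposition~\ref{prop:functors}(1) gives $F(P)=K(\underline{P})=K(P)$, with $\nu_P\colon K(P)\to P$ again the inclusion. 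Thus $F$ and $F'$ agree on $\on{Pro}(X)$, and the morphisms $\nu$ and $\nu'$ agree there as well; via the equivalence $\gamma'$ this yields $\nu=\nu'$, identifying $F$ with the kernel of $\rho$ and producing the asserted exact sequence $0\to F\to o_X\to o^Z\to 0$ in $\mc{R}(X,X)$.

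The step that requires genuine care — the main obstacle — is the claim that the effacement functor $F$ of Proposition~\ref{prop:functors}(1) is itself right exact, so that it really is an object of $\mc{R}(X,X)$ and is therefore recovered from its restriction to $\on{Pro}(X)$. This is the precise dual of the (implicit) point in Lemma~\ref{lem:coker} that $G$ is left exact, which is what upgrades agreement on injectives to an equality of functors. When $X$ also has enough injectives (as for module categories, the main case of interest), this is immediate: Proposition~\ref{prop:functors}(3) applies and exhibits $F$ as the left adjoint of $G$, so $F$ preserves cokernels and is right exact. In the general enough-projectives setting one instead argues directly: given a short exact sequence in $X$, one uses the enough-projectives hypothesis together with Lemma~\ref{lem:kerinS} to build a compatible triple of $Z$-projective effacements with kernels in $Z$ fitting into a horseshoe-type commutative diagram, and then checks by a diagram chase that applying $K$ yields the right exactness of $F$. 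Once right exactness is in hand, the identification of the previous paragraph completes the proof.
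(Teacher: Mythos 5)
Your proposal is correct, and its first two paragraphs coincide with the paper's intended argument: the paper states Lemma~\ref{lem:ker} explicitly \emph{without proof}, as the formal dual of Lemma~\ref{lem:coker}, and your dualization---$\rho$ is an objectwise epimorphism and cokernels in $\mc{R}(X,X)$ are objectwise, so $\rho$ is epi; the kernel $F'$ of $\rho$ is computed objectwise on projectives via the equivalence $\mc{R}(X,X)\simeq \operatorname{Add}(\on{Pro}(X),X)$; the identity map of a projective is a $Z$-projective effacement, so $F$, $F'$ and $\nu$, $\nu'$ agree on $\on{Pro}(X)$---matches that blueprint step for step.

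Where you go beyond the paper is your third paragraph, and the point you raise there is genuine: agreement on $\on{Pro}(X)$ identifies $F$ with $F'$ only if $F$ is already known to be an object of $\mc{R}(X,X)$ (the functor $K$ itself agrees with $F'$ on projectives but is not right exact in general, so the inference fails without this), and the paper's proof of Lemma~\ref{lem:coker} leaves the dual point, left exactness of $G$, equally implicit. Note, however, that your first suggested fix (adjointness via Proposition~\ref{prop:functors}(3)) only covers a special case, since Lemma~\ref{lem:ker} assumes enough projectives but not enough injectives, and without $Z$-injective effacements that proposition cannot be invoked; so in the stated generality your direct argument must carry the proof. It does, and one can even avoid the horseshoe chase: for $0 \to L \to P \to M \to 0$ with $P$ projective, the effacement furnished by Lemma~\ref{lem:kerinS} is $P/K(L) \to M$, and since $K(L) \subseteq L \cap K(P)$ (as $L/(L\cap K(P))$ embeds in $P/K(P) \in Z$) one computes $F(M) = K(P/K(L)) = K(P)/K(L)$; meanwhile, for any epimorphism $q: P_1 \to L$ with $P_1$ projective, the image of $K(P_1)$ in $K(P)$ is exactly $K(L)$ (it is contained in $K(L)$ because $K$ acts on morphisms by restriction, and it contains $K(L)$ because $L/q(K(P_1))$ is a quotient of $P_1/K(P_1) \in Z$), so $F'(M) = \coker\bigl(K(P_1) \to K(P)\bigr) = K(P)/K(L)$ as well. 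This exhibits $F(M) \cong F'(M)$ naturally for all $M$ at once, yielding simultaneously the right exactness of $F$ and the exact sequence of the lemma.
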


\section{Effacements in Grothendieck categories}
\label{sec:eff-groth}

We now specialize the theory of the previous section to a Grothendieck category $X$ with a closed subcategory $Z$. 
As usual, let $i_*: Z \to X$ be the inclusion functor and $i^!: X \to Z$, $i^*: X \to Z$ its respective right and left adjoints, 
and let $o_Z = i_* i^!, o^Z = i_* i^*: X \to X$.  A Grothendieck category $X$ has injective hulls, and an injective hull $M \to E(M)$ is a $T$-injective effacement for any collection of objects $T$.  Thus the constructions using injective effacements in the previous sections can be described in the simple way provided by Lemma~\ref{lem:coker}.  In particular, there is a left exact functor $G_Z$
defined by Proposition~\ref{prop:functors}(2) with $T = Z$, and $G_Z$ is also the cokernel of the natural map $\theta: o_Z \to o_X$ in the abelian category $\mc{L}(X, X)$ of left exact functors.   It is important to know when $G_Z$ has a left adjoint.  Since many Grothendieck categories of interest do not have enough projectives, the idea is to use Proposition~\ref{prop:functors} to construct a left adjoint $F_Z$ to the functor $G_Z$.  For this, we need $X$ to have $S$-projective effacements for a suitable collection of objects $S$ in $Z$.

First, we will need to study some more general properties of $S$-projective effacements.
We have the following alternative characterization of an $S$-projective effacement of $M$.
\begin{lemma}
\label{lem:pe-equiv}
Let $X$ be a Grothendieck category, and let $S$ be any collection of objects in $X$.  The following are equivalent for an object $M \in X$ and a surjection $\pi: \underline{M} \to M$:
\begin{enumerate}
\item[(i)] The morphism $\pi$ is an $S$-projective effacement.
\item[(ii)] For every $N \in S$, the map $\pi^*: \Ext^1_X(M, N) \to \Ext^1_X(\underline{M}, N)$ induced by $\pi$ is $0$.
\end{enumerate}
\end{lemma}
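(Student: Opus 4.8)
The plan is to translate both conditions into statements about the Baer correspondence between $\Ext^1(M,N)$ and equivalence classes of short exact sequences $0 \to N \to P \to M \to 0$, which is available here since a Grothendieck category has enough injectives. First I would observe that the data appearing in condition (i) is exactly the data of such an extension: an epimorphism $f: P \to M$ with $\ker f = N \in S$ is the same thing as a short exact sequence $0 \to N \to P \overset{f}{\to} M \to 0$ with $N \in S$, hence the same as a class $\xi \in \Ext^1(M,N)$ for some $N \in S$. Under this correspondence the induced map $\pi^*: \Ext^1(M,N) \to \Ext^1(\underline{M}, N)$ is pullback of extensions along $\pi$: if $\xi$ is represented by $0 \to N \to P \overset{f}{\to} M \to 0$, then $\pi^*(\xi)$ is represented by $0 \to N \to P' \overset{q}{\to} \underline{M} \to 0$, where $P' = P \times_M \underline{M}$ is the pullback of $f$ and $\pi$, with $q$ the projection to $\underline{M}$ and $\tilde f: P' \to P$ the projection to $P$ satisfying $f \tilde f = \pi q$ (the same pullback square as in Lemma~\ref{lem:liftextend}).

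The heart of the argument is then the standard fact that $\pi^*(\xi) = 0$ in $\Ext^1(\underline{M}, N)$ if and only if the pullback sequence splits, i.e. the projection $q$ admits a section $s: \underline{M} \to P'$ with $q s = 1_{\underline{M}}$. I would carry out each direction by passing across this equivalence. For (i) $\Rightarrow$ (ii): given $N \in S$ and $\xi$ represented as above, the map $f$ is an epimorphism with kernel $N \in S$, so the effacement property of $\pi$ produces $g: \underline{M} \to P$ with $f g = \pi$; then the compatible pair $(g, 1_{\underline{M}})$ factors through the pullback to give a section $s$ of $q$, whence $\pi^*(\xi) = 0$. Since $\xi$ and $N$ were arbitrary, $\pi^*$ is zero on $\Ext^1(M,N)$ for all $N \in S$.

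For (ii) $\Rightarrow$ (i): given an epimorphism $f: P \to M$ with $N = \ker f \in S$, form the corresponding class $\xi \in \Ext^1(M,N)$; by hypothesis $\pi^*(\xi) = 0$, so the pullback sequence splits, giving a section $s: \underline{M} \to P'$ of $q$. Setting $g = \tilde f \circ s: \underline{M} \to P$, one computes $f g = f \tilde f s = \pi q s = \pi$, which is precisely the lift required by the definition of an $S$-projective effacement. Hence $\pi$ is an $S$-projective effacement.

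I do not expect any serious obstacle, since the statement is essentially a dictionary translation through the extension–splitting correspondence. The only points demanding care are the identification of $\pi^*$ with pullback along $\pi$ (getting the variance right, so that $\pi^*: \Ext^1(M,N) \to \Ext^1(\underline{M},N)$ corresponds to pulling the bottom row back along $\pi$), and the verification via the universal property of the pullback that a lift $g$ converts into a section $s$ of $q$ and conversely; both are routine diagram chases with the pullback square.
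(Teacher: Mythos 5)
Your proposal is correct, and it is a complete argument. Both directions are sound: the identification of $\pi^*$ with pullback of extensions along $\pi$ is valid here because the paper has already recorded (in Section~2) the usual correspondence between elements of $\Ext^1(M,N)$, computed via injective resolutions, and equivalence classes of extensions, and this correspondence is natural in the first variable; the conversion between a lift $g$ with $fg = \pi$ and a section $s$ of the pullback projection $q$ is exactly the universal property of the pullback, and a section of $q$ is the same as a splitting of the pulled-back sequence. Where you differ from the paper is that the paper gives no argument at all: its proof of this lemma is a single sentence citing Roos \cite[Corollary 1.4]{Roos}, which establishes the statement in the case $S = X$, together with the remark that the generalization to arbitrary $S$ is immediate. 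Your write-up supplies the details hidden behind that citation, and in doing so it makes transparent exactly why the restriction to a subcollection $S$ costs nothing: the argument only ever quantifies over epimorphisms whose kernels lie in $S$ and over classes in $\Ext^1(M,N)$ for $N \in S$, never using any closure property of $S$. The trade-off is the usual one: the paper's citation is shorter and leans on an external reference, while your proof is self-contained and elementary, at the cost of having to invoke (or verify) the compatibility of the injective-resolution $\Ext^1$ with the Baer/pullback description, which is standard but not proved in the paper either.
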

\begin{proof}
This is an immediate generalization of \cite[Corollary 1.4]{Roos}, which proves the result in case $S = X$.  
\end{proof}

Next, we see that in certain cases, to show that $X$ has $S$-projective effacements it suffices to check the definition for only some 
objects in $X$ or only some objects in $S$.
\begin{lemma}
\label{lem:pe-props}  Fix a collection $S$ of objects in the Grothendieck category $X$.
\begin{enumerate}
\item If $M \in X$ has an $S$-projective effacement, then so does any epimorphic image of $M$.
\item If $\{ M_{\alpha} \}$ is a set of objects, each of which has an $S$-projective effacement $\pi_{\alpha}: \underline{M_{\alpha}} \to M_{\alpha}$, then the direct sum $\oplus \pi_{\alpha}: \bigoplus_{\alpha} \underline{M_{\alpha}} \to \bigoplus_{\alpha} M_{\alpha}$ is an $S$-projective effacement of $\bigoplus_{\alpha} M_{\alpha}$.
\item If $\{ O_{\alpha} \}$ is a (small) set of generators for $X$, then $X$ has $S$-projective effacements if and only if every $O_{\alpha}$ has an $S$-projective effacement. 
\item Suppose that $X$ is locally noetherian.  Let $S'$ be a collection of objects such that every object in $S$ is a direct sum or a direct limit of objects in $S'$.   If $\pi: \underline{M} \to M$ is a $S'$-projective effacement for a noetherian object $M \in X$, then $\pi$ is also an $S$-projective effacement.
\end{enumerate}
\end{lemma}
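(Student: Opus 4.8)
The plan is to reduce every part to the cohomological criterion of Lemma~\ref{lem:pe-equiv}, which says that a surjection $\pi \colon \underline{M} \to M$ is an $S$-projective effacement exactly when the induced map $\pi^* \colon \Ext^1(M, N) \to \Ext^1(\underline{M}, N)$ vanishes for every $N \in S$. Once everything is phrased this way, parts (1)--(3) become formal consequences of the functoriality of $\Ext^1$ and its interaction with direct sums recorded in Lemma~\ref{lem:extsumprod}, while part (4) is the only place where a genuinely new input (the noetherian hypothesis) is needed.

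For part (1), if $\pi \colon \underline{M} \to M$ is an $S$-projective effacement and $q \colon M \to M'$ is an epimorphism, I would take $q \circ \pi \colon \underline{M} \to M'$ as the candidate effacement of $M'$. It is surjective, and for $N \in S$ the induced map factors as $(q\pi)^* = \pi^* \circ q^*$; since $\pi^* = 0$ by hypothesis, the composite is $0$, so Lemma~\ref{lem:pe-equiv} applies. For part (2), the isomorphism $\Ext^1(\bigoplus_\alpha M_\alpha, N) \cong \prod_\alpha \Ext^1(M_\alpha, N)$ of Lemma~\ref{lem:extsumprod}(3) is natural, and I would check that under it the map $(\oplus \pi_\alpha)^*$ corresponds to the product $\prod_\alpha \pi_\alpha^*$; since each $\pi_\alpha^* = 0$ for $N \in S$, the product vanishes, so $\oplus \pi_\alpha$ is an $S$-projective effacement. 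Part (3) is then immediate: the forward implication is trivial, and for the converse, given any $M$ I would choose an epimorphism from a direct sum of copies of the generators $O_\alpha$ onto $M$ (using the standard characterization of generators), invoke part (2) to give this direct sum an $S$-projective effacement, and then apply part (1) to transfer it to the epimorphic image $M$.

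The substance of the lemma is part (4). Fix $N \in S$ and a class $\xi \in \Ext^1(M, N)$; I want to show $\pi^*(\xi) = 0$. Writing $N$ as a direct limit $\dirlim N'_\gamma$ of objects $N'_\gamma \in S'$, the noetherianity of $M$ lets me apply Lemma~\ref{lem:extsumprod}(2) to get $\Ext^1(M, N) \cong \dirlim \Ext^1(M, N'_\gamma)$. Consequently $\xi$ is the image of some class $\xi_\gamma \in \Ext^1(M, N'_\gamma)$ under the structure map $\lambda_\gamma \colon N'_\gamma \to N$, that is, $\xi = (\lambda_\gamma)_* \xi_\gamma$. The key point is that $\pi^*$ is natural in the second variable, so it commutes with the pushforward $(\lambda_\gamma)_*$; hence $\pi^*(\xi) = (\lambda_\gamma)_* \pi^*(\xi_\gamma)$, and since $N'_\gamma \in S'$ and $\pi$ is an $S'$-projective effacement, $\pi^*(\xi_\gamma) = 0$. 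Therefore $\pi^*(\xi) = 0$, and Lemma~\ref{lem:pe-equiv} shows $\pi$ is an $S$-projective effacement. The direct-sum case is identical, using the finite-support description of the isomorphism $\Ext^1(M, \bigoplus N'_\gamma) \cong \bigoplus \Ext^1(M, N'_\gamma)$.

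The main obstacle to keep in view is precisely why the noetherian hypothesis on $M$ cannot be dropped in part (4): the object $\underline{M}$ need not be noetherian, so one cannot pull the direct limit or direct sum out of the \emph{target} $\Ext^1(\underline{M}, N)$. The argument is arranged to sidestep this difficulty; the noetherianity of $M$ is used only to factor a class in the \emph{source} $\Ext^1(M, N)$ through a single $N'_\gamma$, after which functoriality in the second variable does all the work and no control over $\Ext^1(\underline{M}, N)$ is required.
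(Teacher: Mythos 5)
Your proof is correct and takes essentially the same approach as the paper's: parts (2)--(4) match the text's arguments exactly (the direct-sum diagram via Lemma~\ref{lem:extsumprod}(3) for (2), generators plus (1)--(2) for (3), and for (4) the key point that noetherianity of $M$ is used only to pull the limit or sum out of the \emph{source} $\Ext^1(M,-)$, never out of $\Ext^1(\underline{M},-)$, your element-wise phrasing being equivalent to the paper's commutative square). The only cosmetic difference is in part (1), where the paper verifies the lifting property directly via Lemma~\ref{lem:liftextend} while you invoke the $\Ext$-vanishing criterion of Lemma~\ref{lem:pe-equiv}; the two are interchangeable and yield the same effacement $q \circ \pi$.
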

\begin{proof}
(1)  Let $f: M \to N$ be an epimorphism, and suppose that $\pi: \underline{M} \to M$ is an $S$-projective effacement of $M$.  If 
$p: Q \to N$ is an epimorphism with kernel in $S$, then Lemma~\ref{lem:liftextend} shows that there is is a map $\wt{f}: \underline{M} \to Q$ covering $f$, in other words such that $p \wt{f} = f \pi$.  This shows that $f \pi: \underline{M} \to N$ is an $S$-projective effacement of $N$.

(2) Use Lemma~\ref{lem:pe-equiv} and the following commutative diagram, in which the vertical maps are isomorphisms by Lemma~\ref{lem:extsumprod}(3):
\[
\xymatrix{\prod_{\alpha} \Ext^1(M_{\alpha}, N) \ar[r] \ar^{\cong}[d] & \prod_{\alpha} \Ext^1(\underline{M_{\alpha}}, N) \ar^{\cong}[d] \\
\Ext^1(\bigoplus_{\alpha} M_{\alpha}, N) \ar[r]  & \Ext^1(\bigoplus_{\alpha} \underline{M_{\alpha}}, N).  \\
}
\]

(3) This follows from (1) and (2), since every object in $X$ is an epimorphic image of a direct sum of generators.

(4) Consider the following commutative diagram for an arbitrary direct sum $\bigoplus N_{\alpha}$ with each $N_{\alpha} \in S'$:
\[
\xymatrix{\bigoplus_{\alpha} \Ext^1(M, N_{\alpha}) \ar^{0}[r] \ar^{\cong}[d] & \bigoplus_{\alpha} \Ext^1(\overline{M}, N_{\alpha}) \ar[d] \\
\Ext^1(M, \bigoplus_{\alpha} N_{\alpha}) \ar[r]  & \Ext^1(\overline{M}, \bigoplus_{\alpha} N_{\alpha}).  \\
}
\]
The top arrow is $0$ since $\ol{M} \to M$ is an $S'$-projective effacement.  The left vertical arrow is an isomorphism since $M$ is a noetherian object, using Lemma~\ref{lem:extsumprod}(2).  It follows that the bottom arrow is also $0$.   Given a directed system $\{ N_{\alpha} \}$ instead, the same argument works to show 
that the map $\Ext^1_X(M, \dirlim\ N_{\alpha}) \to \Ext^1(\ol{M}, \dirlim\ N_{\alpha})$ is zero, again using Lemma~\ref{lem:extsumprod}(2).  
Since every object in $S$ is a direct sum or direct limit of objects in $S'$, 
we conclude that $\overline{M} \to M$ is an $S$-projective effacement by Lemma~\ref{lem:pe-equiv}.
\end{proof}

When applying Proposition~\ref{prop:functors} to construct a functor using $S$-projective effacements, it is necessary to have effacements whose kernels 
are actually in $S$.  This is easy when $S$ is equal to all objects in the closed subcategory $Z$, by Lemma~\ref{lem:kerinS}.  The following lemma gives another 
important case where we can guarantee this.
\begin{lemma}
\label{lem:goodeff}
Let $Z$ be a closed subcategory of the Grothendieck category $X$.  Let $S$ be the collection of objects in $Z$ which are injective in the category $Z$.  
If $M \in X$ has an $S$-projective effacement, then there is an $S$-projective effacement $\pi_M: \underline{M} \to M$ such that $\ker \pi_M \in S$.
\end{lemma}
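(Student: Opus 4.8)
The plan is to start with any $S$-projective effacement of $M$ and surgically enlarge its kernel to an injective object of $Z$ by means of a pushout along an injective hull. First I would invoke Lemma~\ref{lem:kerinS} to reduce to the case of an $S$-projective effacement $\pi \colon \underline{M} \to M$ whose kernel $K = \ker \pi$ already lies in $Z$. By Lemma~\ref{lem:small}(3) the category $Z$ is itself a Grothendieck category, hence has injective hulls, so $K$ admits an injective hull $\phi \colon K \hookrightarrow E$ \emph{computed in $Z$}; by construction $E$ is injective in $Z$, i.e. $E \in S$.

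Next I would replace $\underline{M}$ by a pushout. Writing $\iota \colon K \hookrightarrow \underline{M}$ for the kernel inclusion, let $Q$ be the pushout of $\iota$ and $\phi$. The standard ``change of cobase'' construction yields a short exact sequence $0 \to E \to Q \xrightarrow{\sigma} M \to 0$ together with a canonical map $j \colon \underline{M} \to Q$ satisfying $\sigma j = \pi$ and $\sigma|_E = 0$; in particular $\sigma$ is an epimorphism onto $M$ with $\ker \sigma \cong E \in S$. So the only thing left to prove is that this new epimorphism $\sigma$ is again an $S$-projective effacement.

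For that, I would verify the lifting property directly. Suppose $f \colon P \to M$ is an epimorphism with $L := \ker f \in S$. Because $\pi$ is an $S$-projective effacement and $L \in S$, there is $g \colon \underline{M} \to P$ with $f g = \pi$. The composite $g \iota \colon K \to P$ satisfies $f g \iota = \pi \iota = 0$, so it factors through $L$. Now $K, E \in Z$, the map $\phi \colon K \to E$ is a monomorphism in $Z$, and $L \in S$ is injective in $Z$; hence the corestricted map $g\iota \colon K \to L$ extends along $\phi$ to $\psi \colon E \to L \subseteq P$ with $\psi \phi = g \iota$. Thus $g$ and $\psi$ agree on $K$, and the universal property of the pushout produces $h \colon Q \to P$ with $h j = g$ and $h|_E = \psi$. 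Finally $f h = \sigma$, since both maps agree after precomposing with the epimorphism $\underline{M} \oplus E \to Q$: on the $\underline{M}$-summand $f h j = f g = \pi = \sigma j$, while on the $E$-summand $f h|_E = f \psi = 0 = \sigma|_E$ because $\psi$ lands in $L = \ker f$. This gives the required lift, so $\sigma \colon Q \to M$ is an $S$-projective effacement with kernel in $S$.

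The main obstacle I anticipate is the third step: arranging the enlarged kernel to be genuinely injective \emph{in the subcategory $Z$} without destroying the effacement property. The pushout is the natural device to replace $K$ by its $Z$-injective hull $E$, and the crucial leverage is that the injectivity of $L$ in $Z$ is precisely what lets a morphism out of $K$ extend over $E$, which is what makes the lifting property survive. The one point to keep honest is that all the relevant objects ($K$, $E$, $L$) really lie in $Z$, so that injectivity in $Z$ rather than in $X$ is the right notion; this is exactly what the reduction via Lemma~\ref{lem:kerinS} and the intrinsic Grothendieck structure of $Z$ from Lemma~\ref{lem:small}(3) supply.
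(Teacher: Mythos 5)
Your proposal is correct and follows essentially the same route as the paper's proof: reduce via Lemma~\ref{lem:kerinS} to a kernel in $Z$, push out along the $Z$-injective hull of that kernel, and then verify the lifting property by lifting through the original effacement, extending over the hull using injectivity in $Z$, and invoking the pushout's universal property, checking the final identity on the two pieces that jointly cover the pushout. The only cosmetic difference is that you phrase the last verification via the epimorphism $\underline{M} \oplus E \to Q$, whereas the paper phrases it as $\im(i) + \im(b) = P$; these are the same argument.
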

\begin{proof}
By Lemma~\ref{lem:kerinS}, since $M$ has an $S$-projective effacement, we can find an $S$-projective effacement 
$t: M' \to M$ with $K = \ker t \in Z$.  Now let $g: K \to E = E_Z(K)$ be an injective hull in the category $Z$.   If $P$ is the pushout of the maps $f: K \to M'$ and $g$, we obtain an exact sequence $0 \to E \to P \to M \to 0$, giving the first two rows of the commutative diagram below \cite[Lemma 7.28]{Rot}.  

We claim that $\pi: P \to M$ is an $S$-projective effacement; then we will be done since its kernel $E$ is apparently in $S$.   Now if $0 \to F \overset{r}{\to} Q \overset{s}{\to} M \to 0$ is any short exact sequence with $F \in S$, consider the following commutative diagram: 
\[
\xymatrix{0 \ar[r] & E  \ar^b[r] &  P  \ar^{\pi}[r] & M \ar[r] & 0 \\
0 \ar[r] & K \ar_{h'}[d] \ar^g[u] \ar^f[r] & M' \ar^{i}[u]\ar_h[d] \ar^{t}[r] & M \ar^{=}[u]\ar_{=}[d] \ar[r] & 0  \\
0 \ar[r] & F \ar^r[r] & Q \ar^s[r] & M \ar[r] & 0.
}
\]
Here, the map $h$ exists completing the bottom right square since $t: M' \to M$ is an $S$-projective effacement, and the map $h'$ is induced by $h$.
Since $F$ is in $S$ and so is injective in $Z$, there is $d: E \to F$ such that $dg = h'$.   Since $rdg = rh' = hf$, by the universal property of the pushout
there is a morphism $j: P \to Q$ such that $h = ji$ and $rd = jb$.   Then $\pi i = t = sh = sji$, and so $(\pi - sj) i = 0$.   Now by construction of the pushout, 
one has $\im(i) + \im(b) = P$.  We have seen that $\im(i) \subseteq \ker (\pi - sj)$.  We also have  $\pi b = 0$ and $sjb = srd = 0$, and then 
$(\pi - sj) b = \pi b - sjb = 0$, so that $\im(b) \subseteq \ker (\pi - sj)$.  Thus $P \subseteq \ker (\pi - sj)$ and 
$\pi = sj$, which shows that $\pi$ is an $S$-projective effacement as required.
\end{proof}

Recall that since $X$ has $Z$-injective effacements, for each $M \in X$ we can choose a $Z$-injective effacement with cokernel 
in $Z$, by Lemma~\ref{lem:kerinS}.  Following the proof of that lemma, we see in fact that for each $M$ we have a canonical (up to isomorphism) $Z$-injective effacement $j: M \to \overline{M}$, with $\overline{M}/M = i^!(E(M)/M)$,  where $E(M)$ is the injective hull of $M$.  In other words, 
$\overline{M}$ is the maximal essential extension of $M$ by an object in $Z$.  These canonical $Z$-injective effacements have the following property.
\begin{lemma}
\label{lem:inj-bottom} Let $Z$ be a closed subcategory of the Grothendieck category $X$.   For $M \in X$, 
let $j: M \to \overline{M}$ be the canonical $Z$-injective effacement discussed above.  Then applying the functor $i^!$ yields an injection $i^!(j): i^!(M) \to i^!(\overline{M})$, which is an injective hull of $i^!(M)$ in the category $Z$.
\end{lemma}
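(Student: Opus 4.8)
The plan is to reduce the statement to an assertion about the injective hull $E(M)$ itself, exploiting the construction of $\overline{M}$ as the preimage in $E(M)$ of $i^!(E(M)/M)$. First I would record the easy half: since $i^!$ is left exact by adjointness and $j: M \to \overline{M}$ is a monomorphism, $i^!(j): i^!(M) \to i^!(\overline{M})$ is automatically a monomorphism.

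The crucial step is the identification $i^!(\overline{M}) = i^!(E(M))$. Since $\overline{M} \subseteq E(M)$, any subobject of $\overline{M}$ lying in $Z$ is also a subobject of $E(M)$ in $Z$, giving $i^!(\overline{M}) \subseteq i^!(E(M))$. For the reverse inclusion, I would observe that the image of $i^!(E(M))$ under the quotient map $E(M) \to E(M)/M$ is a quotient of an object of $Z$, hence lies in $Z$ by closure under subquotients, and is therefore contained in the largest such subobject $i^!(E(M)/M) = \overline{M}/M$. Consequently $i^!(E(M))$ lands inside the preimage $\overline{M}$ of $\overline{M}/M$, so $i^!(E(M)) \subseteq i^!(\overline{M})$, and equality follows. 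This reduces the whole problem to showing that $i^!(E(M))$ is an injective hull of $i^!(M)$ in the category $Z$.

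For injectivity of $i^!(E(M))$ in $Z$, I would argue directly: given a monomorphism $A \to B$ in $Z$ and a map $A \to i^!(E(M))$, compose with the inclusion into $E(M)$ and extend along $A \to B$ using injectivity of $E(M)$ in $X$; since $B \in Z$, the image of the extension is a quotient of $B$, hence lies in $Z$, so it factors through $i^!(E(M))$, producing the desired extension. (This is just the standard fact that the right adjoint of an exact functor preserves injectives, applied to the exact inclusion $i_*$ and its right adjoint $i^!$.) For essentiality, I would first check that $i^!(M) = M \cap i^!(E(M))$, where both inclusions follow from the maximality defining $i^!$ together with closure of $Z$ under subquotients. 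Then, given a nonzero subobject $N \subseteq i^!(E(M))$, essentiality of $M$ in $E(M)$ gives $N \cap M \neq 0$, and since $N \cap M \subseteq M \cap i^!(E(M)) = i^!(M)$, we obtain $N \cap i^!(M) \neq 0$, proving that $i^!(M) \subseteq i^!(E(M))$ is an essential extension in $Z$.

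The main obstacle is precisely the identification $i^!(\overline{M}) = i^!(E(M))$: the definition presents $\overline{M}$ through its quotient $\overline{M}/M \in Z$, whereas controlling $i^!$ requires understanding subobjects lying in $Z$, so the argument must pass carefully between the subobject and quotient descriptions, and closure of $Z$ under subquotients is what makes both directions work. Once this reduction is in place, the injectivity and essentiality verifications are routine diagram work.
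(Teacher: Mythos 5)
Your proposal is correct, and it takes a genuinely different route from the paper's. The paper starts on the other side: it fixes an injective hull $I = E_Z(i^!(M))$ inside the category $Z$, embeds everything compatibly as $i^!(M) \subseteq I \subseteq E(M)$ (extending the inclusion $i^!(M) \to E(M)$ along the essential monomorphism $i^!(M) \subseteq I$), shows $I \subseteq i^!(\overline{M})$ by noting that $(I+M)/M \cong I/(I \cap M) \in Z$ forces $I + M \subseteq \overline{M}$, proves essentiality of $i^!(M) \subseteq i^!(\overline{M})$ by exactly the intersection argument you use (a nonzero $P \subseteq i^!(\overline{M})$ meets $M$, and $P \cap M \in Z$ lands in $i^!(M)$), and then concludes $i^!(\overline{M}) = I$ because $I$ is a \emph{maximal} essential extension in $Z$. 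Your proof instead identifies $i^!(\overline{M}) = i^!(E(M))$ via the subobject/quotient bookkeeping you describe, and then establishes injectivity of $i^!(E(M))$ in $Z$ directly from the fact that $i^!$ is right adjoint to the exact inclusion $i_*$ (exact because $Z$ is closed under subquotients). What your route buys: it never needs the existence of injective hulls in the category $Z$ (i.e., that $Z$ is itself Grothendieck), nor the slightly delicate step of choosing $E(M)$ compatibly containing a hull of $i^!(M)$; the adjunction fact does that work. What the paper's route buys: it exhibits $i^!(\overline{M})$ as literally equal to a pre-chosen hull $E_Z(i^!(M))$, so injectivity in $Z$ comes for free from the choice rather than from a separate lifting argument. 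Both proofs share the same essentiality computation, and both are complete; the identification $i^!(M) = M \cap i^!(\overline{M})$ you verify is implicit in the paper's version as well.
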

\begin{proof}
Let $N = i^!(M)$, in other words the largest subobject of $M$ in $Z$.  Let $N \subseteq I$ be an injective hull in the category $Z$.  Then $N \subseteq I$ is essential in 
the category $X$ as well, and so we can choose an injective hull $E(N)$ of $N$ in $X$ with $N \subseteq I \subseteq E(N)$.   Since $N \subseteq M$, we can choose an injective hull $E(M)$ of $M$ in $X$ so that $E(N) \subseteq E(M)$.  

Working in $E(M)$, $I+M/M \cong I/(I \cap M)$ is in  $Z$, and thus $I+M \subseteq \overline{M}$ since $\overline{M}/M$ is the largest subobject of $E(M)/M$ in $Z$.  In particular, $I \subseteq \overline{M}$ and thus $I \subseteq i^!(\overline{M})$.  On the other hand, if $0 \neq P \subseteq i^!(\overline{M})$, then $P \cap M \neq 0$ as $M \subseteq \overline{M}$ is essential.  Since $P \in Z$, $0 \neq P \cap M \subseteq  i^!(M)$ and this proves that $i^!(j): i^!(M) \to i^!(\overline{M})$ is essential (in $X$ or in $Z$).  This forces $i^!(\overline{M}) = I$ since $I$ is a maximal essential extension in $Z$. 
\end{proof}

Considering the definition, an $S$-projective effacement of an object $M$ is some object which, speaking loosely, ought to be formed by sticking objects in $S$ to the bottom of $M$ in all possible nontrivial ways.  This naive description can actually be made formal in some cases, as we see in the proof of the following 
result.
\begin{proposition}
\label{prop:efface}  Let $X$ be a locally noetherian Grothendieck category, and let $S$ be a collection of objects in $X$ with a small subcollection $S' \subseteq S$ such that every element of $S$ is either a direct sum or a direct limit of objects in $S'$.

Fix a noetherian object $M \in X$, and suppose that for any small-indexed family of objects $(N_{\alpha})$ with each $N_{\alpha} \in S'$, the natural map
\begin{equation}
\label{eq:ext}
 \Ext^1_X(M, \prod_{\alpha} N_{\alpha}) \to \prod_{\alpha} \Ext^1_X(M, N_{\alpha})
\end{equation} is an isomorphism.  Then $M$ has an $S$-projective effacement.
\end{proposition}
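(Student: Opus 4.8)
The plan is to build a single \emph{universal} extension of $M$ by a product of objects from $S'$ and to show directly that its total space is the desired effacement. Two preliminary reductions make this manageable. First, by Lemma~\ref{lem:pe-props}(4), since $M$ is noetherian and every object of $S$ is a direct sum or direct limit of objects of $S'$, it suffices to produce an $S'$-projective effacement of $M$. Second, by the criterion of Lemma~\ref{lem:pe-equiv}, an epimorphism $\pi: \underline{M} \to M$ is an $S'$-projective effacement precisely when the induced map $\pi^*: \Ext^1_X(M, N) \to \Ext^1_X(\underline{M}, N)$ vanishes for every $N \in S'$. Thus the whole problem reduces to constructing a surjection onto $M$ that kills all of these $\Ext^1$ classes at once.

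To do this I would first assemble the index set $I$ of all pairs $(N, \xi)$ with $N \in S'$ and $\xi \in \Ext^1_X(M,N)$. This set is small: $S'$ is small and each $\Ext^1_X(M,N)$ is small, so $I$ is a small-indexed union of small sets. For $i = (N,\xi) \in I$ write $N_i = N$, and form the product $P = \prod_{i \in I} N_i$, which exists since $X$ is complete. The hypothesis \eqref{eq:ext} applies to the family $(N_i)_{i \in I}$, so the natural map $\Ext^1_X(M, P) \to \prod_{i} \Ext^1_X(M, N_i)$ is an isomorphism. I would then let $\eta \in \Ext^1_X(M, P)$ be the unique class mapping to the tuple whose $(N,\xi)$-component is $\xi$ itself, and take $0 \to P \to \underline{M} \xrightarrow{\pi} M \to 0$ to be a short exact sequence representing $\eta$. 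By construction, if $q_i: P \to N_i$ is the projection onto the $i$th factor, then $(q_i)_* \eta = \xi$ for $i = (N, \xi)$.

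It remains to verify the vanishing of $\pi^*$ on $\Ext^1_X(M, N)$ for each $N \in S'$. The key point is the standard fact that pulling back an extension along its own quotient map yields a split sequence, so $\pi^*\eta = 0$ in $\Ext^1_X(\underline{M}, P)$. Now fix $N \in S'$ and $\zeta \in \Ext^1_X(M,N)$, and set $i = (N,\zeta)$. Using that pullback in the first variable commutes with pushforward in the second, $\pi^*\zeta = \pi^*\big((q_i)_* \eta\big) = (q_i)_*\big(\pi^*\eta\big) = 0$. Hence $\pi^*$ is the zero map for every $N \in S'$, so $\pi$ is an $S'$-projective effacement by Lemma~\ref{lem:pe-equiv}, and therefore an $S$-projective effacement by Lemma~\ref{lem:pe-props}(4).

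The conceptual heart of the argument --- and really the only nonformal step --- is the construction of $\eta$: the isomorphism \eqref{eq:ext} is exactly what lets the (small) collection of \emph{all} extension classes of $M$ by objects of $S'$ be encoded in a single class over the product $P$, realizing the informal idea of gluing on every object of $S'$ in all possible nontrivial ways as one honest short exact sequence. Once $\eta$ is in hand, the effacement property is a one-line naturality computation. I do not expect a serious obstacle; the only points requiring care are the smallness of $I$ (which is where the hypotheses on $S'$ and the smallness of $\Ext$ enter) and the existence of the product $P$, both of which are immediate in a locally noetherian Grothendieck category.
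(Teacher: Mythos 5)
Your proof is correct, and its engine is the same as the paper's: you build the universal extension of $M$ by a product of copies of objects of $S'$ indexed by all pairs $(N,\xi)$ with $\xi \in \Ext^1_X(M,N)$, classified via the isomorphism \eqref{eq:ext} by the diagonal tuple of all extension classes --- this is exactly the paper's construction in its non-$k$-category case, including the same smallness argument and the same final reduction through Lemma~\ref{lem:pe-props}(4). The differences lie in the verification and in a refinement you omit. You verify the effacement property via the criterion of Lemma~\ref{lem:pe-equiv}, using the standard facts that an extension pulls back to zero along its own quotient map ($\pi^*\eta = 0$) and that pushforward commutes with pullback, so $\pi^*\zeta = (q_i)_*\pi^*\eta = 0$; this is shorter and purely formal. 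The paper instead checks the lifting property of the definition directly: given $0 \to N \to P \to M \to 0$ of class $\rho$, it builds a map $j$ from the product to $N$ with $\widehat{j}(\theta') = \rho$ and extracts the lift $\underline{M} \to P$ from the pushout along $j$. The more consequential difference is that the paper first treats the case where $X$ is a $k$-category, indexing the product by a chosen $k$-basis of each $\Ext^1_X(M,N)$ rather than by all of its elements. This produces a smaller effacement and, per Remark~\ref{rem:single}, means only the correspondingly smaller single instance of \eqref{eq:ext} needs to hold; that is precisely what the proof of Theorem~\ref{thm:pt} exploits, since there $\dim_k \Ext^1_X(M,P) < \infty$ makes the basis-indexed product \emph{finite}, so the needed instance holds automatically, whereas your index set $\Ext^1_X(M,P)$ is an infinite set whenever $k$ is infinite and the class is nonzero, and the hypothesis for that infinite product is essentially the conclusion one is after. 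So your argument proves Proposition~\ref{prop:efface} as stated, but the paper's basis-indexed variant is the one that powers the later application to closed points.
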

\begin{proof}
We prove this first in the special case that $X$ is a $k$-category for a field $k$.  In this case we can construct 
a smaller projective effacement, which is useful in applications.  Afterward we indicate the easy changes necessary to 
remove the assumption that $X$ is a $k$-category.

Let $M$ be a noetherian object satisfying \eqref{eq:ext} for all small-indexed families of objects in $S'$.  For each $N \in S'$, the underlying set of
$\Ext^1_X(M,N)$ is small, so the union $\bigcup_{N \in S'} \Ext^1_X(M,N)$ is a small union of small sets and is thus small.
For each $N \in S'$, pick a $k$-basis $\beta_N$ for $\Ext^1_X(M, N)$, and let
$\beta = \bigcup_{N \in S'} \beta_N$ be the disjoint union of these bases, which is again a small set.  For each $v \in \beta$, let $N_v$ be a copy of the object $N$ such that $v \in \beta_N$.

Now we have an isomorphism
\begin{equation}
\label{eq:extprod}
\phi: \Ext^1_X(M, \prod_{v \in \beta} N_v) \to \prod_{v \in \beta} \Ext^1_X(M, N_v)
\end{equation}
by hypothesis.  
Note that there is a special element  of the right hand side given by $\theta = \prod_{v \in \beta} v$.  The element 
$\theta' = \phi^{-1}(\theta)$ of the left hand side represents some extension
\[
0 \to \prod_{v \in \beta} N_v \overset{i}{\to} \underline{M} \overset{\pi}{\to} M \to 0
\]
and we claim that $\pi$ is a $S'$-projective effacement of $M$.

Consider an extension $0 \to N \to P \overset{f}{\to} M \to 0$ with $N \in S'$ and the corresponding element $\rho \in \Ext^1_X(M, N)$.  We may write $\rho = \sum_{i = 1}^m a_i v_i$ with $a_i \in k$ and $v_i \in \beta_N \subseteq \beta$ for all $i$.  There is a morphism
\[
j: \prod_{v \in \beta} N_v \overset{q}{\to} \prod_{i=1}^m N_{v_i} \cong \bigoplus_{i=1}^m N \overset{p}{\to} N
\]
where $q$ is the natural projection map, and $p$ is given by the formula $\sum_{i=1}^m a_i 1_N$. 
Since finite direct sums pull out of the second coordinate of Ext (for instance, by Lemma~\ref{lem:extsumprod}(2)), 
the morphism $j$ induces a corresponding map of Ext groups
\[
\wh{j}: \Ext^1_X(M,  \prod_{v \in \beta} N_v) \to \Ext^1_X(M, \prod_{i=1}^m  N_{v_i}) \cong \bigoplus_{i=1}^m  \Ext^1_X(M,  N) \overset{\wh{p}}{\to} \Ext^1_X(M, N)
\]
where $\wh{p}$ is given by the formula $(w_1, \dots, w_m) \mapsto \sum_{i=1}^m a_i w_i$.  By construction, $\wh{j}(\theta') = \rho$.  This means that there is 
a commutative diagram
\[
 \xymatrix{
0 \ar[r] & \prod_{v \in \beta} N_v \ar^i[r] \ar^j[d] & \underline{M} \ar^{\pi}[r] \ar^h[d] & M \ar[r] \ar^{=}[d] & 0 \\
0 \ar[r] & N \ar[r] & P \ar^f[r] & M \ar[r] & 0},
\]
where $P$ is a pushout of $i$ and $j$ \cite[Formula II, p. 429]{Rot}.
Then $h: \underline{M} \to P$ satisfies $f h = \pi$, proving the claim that $\pi$ is a $S'$-projective effacement.

Now by Lemma~\ref{lem:pe-props}(4), since $M$ is noetherian and $\pi: \underline{M} \to M$ is a $S'$-projective effacement, it is also an $S$-projective effacement. 

When $X$ is not necessarily a $k$-category, essentially the same proof works, with the following changes.  First, one replaces $\beta_N$ with the entire 
set $\Ext^1_X(M,N)$ instead, so that $\beta = \bigcup_{N \in S'} \Ext^1_X(M,N)$, which is still small.   Again one writes $N_v$ for a copy of  the $N$ such that $v \in \Ext^1_X(M,N)$.  Given an extension $\rho \in \Ext^1_X(M,N) \subseteq \beta$, one replaces the map $j$ above simply 
with the projection onto the single $\rho$th coordinate:  $j: \prod_{v \in \beta} N_v \to N_{\rho} = N$.  The rest of the proof is the same.
\end{proof}

\begin{remark}
\label{rem:single}
We note that only the single instance \eqref{eq:extprod} of the hypothesis \eqref{eq:ext} of the proposition is used in the proof.   Thus in practice one 
only needs to verify the single equation \eqref{eq:extprod} in order for the result to hold.
\end{remark}

\section{Well-closed and very well-closed subcategories}
\label{sec:wellclosed}

We are now ready to prove our main theorems.  

\begin{theorem}
\label{thm:equivs} Let $X$ be a locally noetherian Grothendieck category, let $Z$ be a closed subcategory of $X$, and let $S$ be the collection of all objects in $Z$ which are injective in the category $Z$.  Let $0 \to o_Z \to o_X \to G \to 0$ be the exact sequence in $\mc{L}(X, X)$ of Lemma~\ref{lem:coker}.

Then the following are equivalent:
\begin{enumerate}
\item $G$ has a left adjoint $F$.
\item The functor $G$ commutes with products of objects in $X$.
\item $[R^1 \prod](N_{\alpha}) = 0$ for all small families $\{ N_{\alpha} \}$ of objects in $S$. 
\item The natural map $\Ext^1(M, \prod_{\alpha} N_{\alpha}) \to \prod_{\alpha} \Ext^1(M, N_{\alpha})$ is 
an isomorphism, for all small families  $\{ N_{\alpha} \}$ of objects in $S$ and for all $M \in X$.
\item The category $X$ has $S$-projective effacements.



\end{enumerate}
\end{theorem}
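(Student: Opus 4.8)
The plan is to prove the chain of implications $(1)\Rightarrow(2)\Rightarrow(3)\Rightarrow(4)\Rightarrow(5)\Rightarrow(1)$. The point of this arrangement is to route around the adjoint functor theorem entirely: the only implication that actually produces a left adjoint is $(5)\Rightarrow(1)$, and there it is constructed explicitly through Proposition~\ref{prop:functors}. Two of the links are essentially free. For $(1)\Rightarrow(2)$, a functor admitting a left adjoint is a right adjoint, hence preserves all limits and in particular products; one checks the resulting comparison map is the natural one. For $(3)\Rightarrow(4)$, this is precisely Lemma~\ref{lem:extsumprod}(1), since $R^1\prod(N_\alpha)=0$ makes the natural map an isomorphism.

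The key step is $(2)\Rightarrow(3)$, and the idea is to test condition (2) not on arbitrary objects but on \emph{injective} objects of $X$. Recall from Lemma~\ref{lem:coker} (via the equivalence $\mc{L}(X,X)\simeq\on{Add}(\on{Inj}(X),X)$) that $G$ is determined on injectives by $G(E)=E/i^!(E)$, the identity map being a $Z$-injective effacement of an injective. Given a small family of injectives $(E_\alpha)$, the product $\prod E_\alpha$ is again injective and $i^!$ preserves products, so applying the left exact functor $\prod$ to the short exact sequences $0\to i^!(E_\alpha)\to E_\alpha\to G(E_\alpha)\to 0$ and using $R^1\prod(E_\alpha)=0$ yields a short exact sequence $0\to G(\prod E_\alpha)\to \prod G(E_\alpha)\to R^1{\textstyle\prod}(i^!E_\alpha)\to 0$, whose left-hand map is exactly the natural product-comparison map for $G$. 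Condition (2) forces this map to be an isomorphism, hence $R^1\prod(i^!E_\alpha)=0$. Finally, since $i^!$ is right adjoint to the exact inclusion $i_*$ it carries injectives of $X$ to injectives of $Z$, and every $N\in S$ is realized as $i^!(E_X(N))$ because $i^!E_X(N)=N$ when $N$ is injective in $Z$ (an injective object has no proper essential extension). Thus every small family in $S$ has the form $(i^!E_\alpha)$, and condition (3) follows.

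For $(4)\Rightarrow(5)$ I would invoke Proposition~\ref{prop:efface}. Since $Z$ is itself locally noetherian by Lemma~\ref{lem:small}(3), Lemma~\ref{lem:small}(1),(2) applied to $Z$ provides a small set $S'$ of indecomposable injectives of $Z$ such that every object of $S$ is a direct sum of members of $S'$; condition (4) supplies the Ext-product isomorphism required by Proposition~\ref{prop:efface} for each noetherian generator of $X$, producing $S$-projective effacements of the generators, and Lemma~\ref{lem:pe-props}(3) then upgrades this to all of $X$. The implication $(5)\Rightarrow(1)$ is the constructive core: Lemma~\ref{lem:goodeff} lets us choose for each $M$ an $S$-projective effacement $\underline{M}\to M$ with kernel in $S$, so the hypotheses of Proposition~\ref{prop:functors}(1) hold, while the canonical $Z$-injective effacements $M\to\overline{M}$ feed Proposition~\ref{prop:functors}(2) and, by Lemma~\ref{lem:coker}, build precisely the functor $G$ of the theorem. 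To conclude via Proposition~\ref{prop:functors}(3) that $(F,G)$ is an adjoint pair I must verify the two cross-conditions: $i^*(\underline{M})\in Z$ is automatic, and $i^!(\overline{M})\in S$ holds because Lemma~\ref{lem:inj-bottom} identifies $i^!(\overline{M})$ with an injective hull of $i^!(M)$ in $Z$. This exhibits the left adjoint $F$ of $G$.

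I expect the main obstacle to be $(2)\Rightarrow(3)$. The naive attempt — testing condition (2) only on families drawn from $S$ — merely sandwiches the $Z$-part $i^!(R^1\prod(N_\alpha))$ inside a higher derived term and does \emph{not} force $R^1\prod(N_\alpha)$ to vanish. The resolution is the change of viewpoint above: testing (2) on injective objects of $X$, where $G$ coincides with $E\mapsto E/i^!(E)$, and exploiting that $S=\{\,i^!(E):E\in\on{Inj}(X)\,\}$ converts the product-comparison isomorphism for $G$ directly into the vanishing of $R^1\prod$ on \emph{all} of $S$.
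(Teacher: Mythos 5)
Your proof is correct, and on the implications $(3)\Rightarrow(4)\Rightarrow(5)\Rightarrow(1)$ it coincides with the paper's own argument: Proposition~\ref{prop:efface} applied with $S'$ the (small) set of indecomposable injectives of $Z$ coming from Lemma~\ref{lem:small}, upgraded by Lemma~\ref{lem:pe-props}(3); then Lemma~\ref{lem:goodeff}, Lemma~\ref{lem:inj-bottom} and Proposition~\ref{prop:functors} to build the adjoint pair $(F,G)$. Where you genuinely differ is in how the cycle is closed. The paper does not argue $(1)\Rightarrow(2)\Rightarrow(3)$ at all: it cites Van den Bergh for the equivalences $(1)\Leftrightarrow(3)$ \cite[Proposition 3.4.7]{VdB} and $(1)\Leftrightarrow(2)$ (Freyd's adjoint functor theorem, \cite[Theorem 2.1(1)]{VdB}). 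You instead note that $(1)\Rightarrow(2)$ is the standard fact that right adjoints preserve limits, and you give a direct proof of $(2)\Rightarrow(3)$ by testing product-compatibility of $G$ on injectives of $X$, where $G(E)=E/i^!(E)$ by the proof of Lemma~\ref{lem:coker}. I checked the details of that step and they hold: products of injectives are injective and a componentwise-injective family is injective in $\prod_I X$, so $R^1\prod(E_\alpha)=0$; $i^!$ preserves products since it is a right adjoint (and products of objects of $Z$ computed in $X$ stay in $Z$); hence the long exact sequence of $R^{\bullet}\prod$ applied to $0\to i^!(E_\alpha)\to E_\alpha\to G(E_\alpha)\to 0$ gives $0\to G(\prod E_\alpha)\to \prod G(E_\alpha)\to R^1\prod(i^!E_\alpha)\to 0$, and the monomorphism is indeed the canonical comparison map (both maps agree after precomposing with the epimorphism $\prod E_\alpha \to G(\prod E_\alpha)$); finally every $N\in S$ is recovered as $i^!(E_X(N))$ because the essential extension $N\subseteq i^!(E_X(N))$ splits in $Z$ when $N$ is injective there. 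What your route buys is a fully self-contained proof that never invokes the adjoint functor theorem — the only place an adjoint is produced is the explicit construction in $(5)\Rightarrow(1)$, which is very much in the spirit of the paper's stated goal of making $F_Z$ concrete; what the paper's route buys is brevity, outsourcing the two abstract equivalences to \cite{VdB}.
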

\begin{proof}
Van den Bergh studies condition $(3)$ and shows that $(1) \Llra (3)$ \cite[Proposition 3.4.7]{VdB}.  Also, the 
equivalence $(1) \Llra (2)$ follows from Freyd's adjoint functor theorem \cite[Theorem 2.1(1)]{VdB}.

Lemma~\ref{lem:extsumprod}(1) shows that $(3) \implies (4)$.

Now suppose that condition $(4)$ holds.   Let $S'$ be the set of isomorphism classes of indecomposable injective objects in the category $Z$.  By Lemma~\ref{lem:small}, 
$Z$ is also a locally noetherian Grothendieck category, every injective object in $Z$ is a direct sum of indecomposable injectives in $Z$, 
and the set $S'$ is small.    Thus by Proposition~\ref{prop:efface}, if $M$ is a 
noetherian object in $X$, then $M$ has an $S$-projective effacement.   Since $X$ has a set of noetherian generators, it 
follows that every $M \in X$ has an $S$-projective effacement by Lemma~\ref{lem:pe-props}(3).   So $(4) \implies (5)$.

Now assuming $(5)$, then in fact every $M \in X$ has an $S$-projective effacement $\pi_M: \underline{M} \to M$ with $\ker \pi_M \in S$, by Lemma~\ref{lem:goodeff}.  Thus Proposition~\ref{prop:functors}(1) applies and constructs a functor $F$.   Let $T$ be the class of all objects in $Z$.  
We already have the canonical $T$-injective effacement $\iota_N: N \to \overline{N}$ with cokernel in $T$, where $\overline{N}/N = i^!(E(N)/N)$, as described 
before Lemma~\ref{lem:inj-bottom}, and Proposition~\ref{prop:functors}(2) constructs a functor which is the same as $G$ up to natural isomorphism, 
by Lemma~\ref{lem:coker}.  we have $i^*(\underline{M}) \in T$ trivially, and $i^!(\overline{N}) \in S$ 
holds for all $N \in X$ by Lemma~\ref{lem:inj-bottom}.   So Proposition~\ref{prop:functors}(3) applies and shows that $(F, G)$ form an adjoint pair.  Thus $(5) \implies (1)$.
\end{proof}

\begin{definition}
Let $Z$ be a closed subcategory of a locally noetherian Grothendieck category $X$.   
We say that $Z$ is \emph{well-closed} in $X$ if any of the  equivalent conditions of Theorem~\ref{thm:equivs} holds.
The definition follows Van den Bergh, who uses this term for condition (3) in the theorem \cite[Definition 3.4.6]{VdB}.
\end{definition}

\noindent 
We expect well-closedness to be a very general condition.  In fact, we do not know any example of a closed subset of a locally noetherian Grothendieck category that is not well-closed.    For some applications it is useful to study a more special condition on closed subcategories than well-closedness.  We have the following variant of Theorem~\ref{thm:equivs}.
\begin{theorem}
\label{thm:equivs2} Let $X$ be a locally noetherian Grothendieck category and let $Z$ be a closed subcategory of $X$.
Then the following are equivalent:
\begin{enumerate}
\item $[R^1 \prod](N_{\alpha}) = 0$ for all small families $\{ N_{\alpha} \}$ of objects in $Z$.
\item $Z$ is well-closed in $X$ and the category $Z$ has exact direct products.
\item The natural map $\Ext^1(M, \prod_{\alpha} N_{\alpha}) \to \prod_{\alpha} \Ext^1(M, N_{\alpha})$ is 
an isomorphism, for all small families  $\{ N_{\alpha} \}$ of objects in $Z$ and for all $M \in X$.
\item Every object $M$ in $X$ has a $Z$-projective effacement.
\end{enumerate}
\end{theorem}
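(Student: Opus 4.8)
The plan is to follow the proof of Theorem~\ref{thm:equivs}, now taking $S$ to be all of $Z$ rather than only the objects injective in $Z$. The genuinely new content is the chain $(1)\Leftrightarrow(3)\Leftrightarrow(4)$, which I would prove with the effacement machinery already developed; the equivalence $(1)\Leftrightarrow(2)$ is Van den Bergh's characterization of very well-closedness \cite[Corollary 3.4.11]{VdB} (condition $(1)$ being his defining condition), and I indicate a direct argument for it at the end.

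The key step is $(3)\Rightarrow(1)$. Fix a family $\{N_\alpha\}$ in $Z$ and, using that $X$ has enough injectives, choose injective hulls $N_\alpha\hookrightarrow E_\alpha$ in $X$ and put $C_\alpha=E_\alpha/N_\alpha$. A product of injectives is injective, so $\prod E_\alpha$ is injective and $[R^1\prod](N_\alpha)=\coker\bigl(\prod E_\alpha\xrightarrow{\psi}\prod C_\alpha\bigr)=:W$; write $V=\im\psi$ and let $p\colon\prod C_\alpha\to W$ be the resulting epimorphism, so that $0\to V\to\prod C_\alpha\xrightarrow{p}W\to 0$. Because $\prod E_\alpha$ is injective and products are exact in $\mathbf{Ab}$, one identifies, for each $M\in X$, $\Ext^1(M,\prod N_\alpha)$ with $\coker\bigl(\Hom(M,\prod E_\alpha)\to\Hom(M,V)\bigr)$ and $\prod\Ext^1(M,N_\alpha)$ with $\coker\bigl(\Hom(M,\prod E_\alpha)\to\Hom(M,\prod C_\alpha)\bigr)$, and under these identifications the natural map $j_M\colon\Ext^1(M,\prod N_\alpha)\to\prod\Ext^1(M,N_\alpha)$ is the one induced by the inclusion $V\hookrightarrow\prod C_\alpha$. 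Since $j_M$ is always injective (Lemma~\ref{lem:extsumprod}(1)), a short diagram chase shows that $j_M$ is an isomorphism if and only if $p_{*}\colon\Hom(M,\prod C_\alpha)\to\Hom(M,W)$ is zero. Now apply $(3)$ with the \emph{test object} $M=\prod C_\alpha$: then $p_{*}=0$ on $\Hom(\prod C_\alpha,\prod C_\alpha)$, and evaluating at the identity gives $p=0$, whence $W=0$ because $p$ is an epimorphism. The converse $(1)\Rightarrow(3)$ is immediate from Lemma~\ref{lem:extsumprod}(1). I expect this to be the main obstacle: the five-term sequence \eqref{eq:low} only yields an injection $\Hom(M,W)\hookrightarrow\Ext^2(M,\prod N_\alpha)$, so it does not obviously force $W=0$; the device that circumvents the $\Ext^2$ term is precisely to feed $\prod C_\alpha$ back in as $M$.

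For $(4)\Rightarrow(3)$, given $M$ take a $Z$-projective effacement with kernel in $Z$, say $0\to L\to\underline{M}\xrightarrow{\pi}M\to 0$ with $L\in Z$ (Lemma~\ref{lem:kerinS}). By Lemma~\ref{lem:pe-equiv} the map $\pi^{*}\colon\Ext^1(M,N)\to\Ext^1(\underline{M},N)$ vanishes for every $N\in Z$, so the long exact sequence gives a natural identification $\Ext^1(M,N)=\coker\bigl(\Hom(\underline{M},N)\to\Hom(L,N)\bigr)$ for $N\in Z$. As $\Hom(-,\prod N_\alpha)=\prod\Hom(-,N_\alpha)$ and products are exact in $\mathbf{Ab}$, this cokernel description commutes with products, which yields $(3)$. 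For $(3)\Rightarrow(4)$ I would apply Proposition~\ref{prop:efface} with $S=Z$ and $S'$ the set of isomorphism classes of noetherian objects of $Z$: this $S'$ is small because every noetherian object of $Z$ is a quotient of a finite direct sum of the noetherian generators $i^{*}(O_\alpha)$ of $Z$ (Lemma~\ref{lem:small}(3)) together with well-poweredness, and every object of $Z$ is the directed union of its noetherian subobjects. Proposition~\ref{prop:efface} then produces a $Z$-projective effacement of each noetherian $M$, and Lemma~\ref{lem:pe-props}(3) promotes this to all of $X$.

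Finally, for $(1)\Leftrightarrow(2)$: applying $(1)$ to families of objects injective in $Z$ recovers condition $(3)$ of Theorem~\ref{thm:equivs}, so $(1)$ implies $Z$ is well-closed. Under well-closedness, the hyperderived-functor spectral sequence of the product functor along a $Z$-injective resolution of $\{N_\alpha\}$ has vanishing $q=1$ row (products of $Z$-injectives are $Z$-injective, and well-closedness kills $[R^1\prod]$ on $Z$-injectives by Theorem~\ref{thm:equivs}), so it collapses to give $[R^1\prod](N_\alpha)\cong[R^1\prod_Z](N_\alpha)$ for $N_\alpha\in Z$, where $\prod_Z$ is the product functor of the Grothendieck category $Z$. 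Hence $(1)$ holds if and only if $Z$ is well-closed and $[R^1\prod_Z]$ vanishes on $Z$, that is, $Z$ has exact products; this is exactly $(2)$, and reproves \cite[Corollary 3.4.11]{VdB} in this setting.
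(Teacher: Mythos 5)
Your proof is correct, and it closes the cycle of implications by a genuinely different route from the paper. The shared ingredients are $(1)\Leftrightarrow(2)$ (both you and the paper cite \cite[Corollary 3.4.11]{VdB}), $(1)\Rightarrow(3)$ via Lemma~\ref{lem:extsumprod}(1), and $(3)\Rightarrow(4)$ via Proposition~\ref{prop:efface} applied with $S'$ the noetherian objects of $Z$, followed by Lemma~\ref{lem:pe-props}(3). Where the paper finishes with $(4)\Rightarrow(2)$ --- well-closedness from Theorem~\ref{thm:equivs}, plus exactness of products in $Z$ obtained by descending a $Z$-projective effacement in $X$ along $i^*$ to an effacement inside the category $Z$ and then invoking Roos's theorem \cite[Theorem 1.3]{Roos}, whose proof Roos leaves to the reader and which the paper must reconstruct in Remark~\ref{rem:roosproof} --- you instead prove the two direct implications $(4)\Rightarrow(3)$ and $(3)\Rightarrow(1)$. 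Both of your arguments check out: the description $\Ext^1(M,N)\cong\coker\bigl(\Hom(\underline{M},N)\to\Hom(L,N)\bigr)$ for $N\in Z$ (valid with $N=\prod_\alpha N_\alpha$ since $Z$ is closed under products) visibly commutes with products in $\operatorname{\textbf{Ab}}$, and your test-object trick --- applying $(3)$ with $M=\prod_\alpha C_\alpha$ so that $p_*(\mathrm{id})=p=0$ forces $W=[R^1\prod](N_\alpha)=0$ --- is precisely the device needed to get past the $\Ext^2$ term in the five-term exact sequence, which is why a direct proof of $(3)\Rightarrow(1)$ does not appear in the paper. The trade-off is this: the paper's route establishes the structural fact that $Z$-projective effacements in $X$ restrict to effacements in the category $Z$ itself (which is what powers the paper's remark that Theorem~\ref{thm:equivs2} reproves part of Roos's theorem), whereas your route is more self-contained, avoiding Roos's theorem and Remark~\ref{rem:roosproof} altogether; moreover, taking $X=Z$ in your chain $(4)\Rightarrow(3)\Rightarrow(1)$ gives an independent proof that a locally noetherian Grothendieck category with self-effacements has exact products. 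Your supplementary dimension-shifting argument for $(1)\Leftrightarrow(2)$ is also sound, resting on the fact that $i_*$ preserves products (it has the left adjoint $i^*$), so that the product functor of $Z$ agrees with that of $X$ on families in $Z$ and injective resolutions in $Z$ compute $[R^1\prod]$ once well-closedness kills the $q=1$ row.
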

\begin{proof}
Van den Bergh proves that $(1) \Llra (2)$ \cite[Corollary 3.4.11]{VdB}.  

Lemma~\ref{lem:extsumprod}(1) shows that $(1) \implies (3)$.   

Assume now that $(3)$ holds.  Let $S = Z$ and let $S'$ be the collection of isomorphism classes of noetherian objects in $Z$.  The category $Z$ is itself Grothendieck, 
and so has a generator $O$.  It is easy to see that any noetherian object in $Z$ is an epimorphic image of $O^{\oplus n}$ for some $n$.  By well-poweredness, 
the set $S'$ is a countable union of small sets and is thus small.  Because $Z$ is locally noetherian, every object in $Z$ is a direct limit of noetherian objects \cite[Proposition 8.6]{Po}.  Since every object in $S$ is a direct limit of objects in $S'$, Proposition~\ref{prop:efface} shows that every noetherian object $M$ in $X$ has an $S$-projective effacement.
Then every object in $X$ has an $S$-projective effacement by Lemma~\ref{lem:pe-props}(3).  Thus $(3) \implies (4)$.

Finally, assume that $(4)$ holds.  Then in particular, every $M \in X$ has an $S$-projective effacement, where $S$ is the collection of objects which are injective in the category $Z$.  Thus $Z$ is well-closed in $X$ by Theorem~\ref{thm:equivs}.  Suppose that $M \in Z$, and that $p: M' \to M$ is a $Z$-projective effacement of $M$ in $X$.   We can assume that $\ker p \in Z$ by Lemma~\ref{lem:kerinS}, but apriori, $M'$ need not be in $Z$.    
Let $\underline{M} = i^*(M') = M'/N$ be the largest factor object of $M'$ which is in $Z$.  Since $M \in Z$, $N \subseteq \ker p$ and so there is an induced epimorphism $\pi: \underline{M} \to M$.

Suppose that $0 \to K \to L \overset{f}{\to} M \to 0$ is a short exact sequence in the category $Z$.   There is $h: M' \to L$ such that $fh = p$, by the definition of $Z$-projective effacement.  Since $L \in Z$, $\im h \in Z$ and so $N \subseteq \ker h$.  In other words, $h$ factors through $\underline{M}$.  Thus $\pi: \underline{M} \to M$ is a $Z$-projective effacement in the category $Z$.  We conclude that the Grothendieck category $Z$ has $Z$-projective effacements.  By a result of Roos \cite[Theorem 1.3]{Roos}, this implies that $Z$ is an (AB4*) category, that is, that $Z$ has exact products.  Thus $(4) \implies (2)$.
\end{proof}

\begin{remark}
\label{rem:roosproof}
Roos leaves the proof of the just-cited part of \cite[Theorem 1.3]{Roos} to the reader.   The proof was not obvious to us, so 
we indicate here a possible argument that if a Grothendieck category $Z$ has $Z$-projective effacements, then it has exact products.   
Given the $Z$-projective effacement $\pi_M: \underline{M} \to M$  and a collection $\{ N_{\alpha} \}$ of objects in $Z$, recall the Grothendieck spectral sequence
\[
E^{p, q}_2 = \Ext^p_Z(M, \textstyle R^{q}\prod_{\alpha}  N_{\alpha}) \implies \prod_{\alpha} \Ext^{p+q}_Z(M, N_{\alpha})
\]
which was used in Lemma~\ref{lem:extsumprod}, now applied in the category $Z$, so that $R^{q}\prod$ is the qth right derived functor of the product functor in $Z$.  By naturality of the exact sequences of low degree terms, applied to the morphism $\pi_M$, we get the following commutative diagram: 
\[
\label{low-eq}
\xymatrix{
0 \ar[r] & \Ext^1_Z(M, \prod_{\alpha} N_{\alpha}) \ar^{d_1}[r]\ar^{g_0}[d]  & \prod_{\alpha} \Ext^1_Z(M, N_{\alpha}) \ar^{d_2}[r]\ar^{g_1}[d]  & \Hom_Z(M, R^1\prod N_{\alpha}) \ar^{d_3}[r]\ar^{g_2}[d] & \Ext^2_Z(M, \prod_{\alpha} N_{\alpha}) \ar^{g_3}[d]  \\
0 \ar[r] & \Ext^1_Z(\underline{M}, \prod_{\alpha} N_{\alpha}) \ar^{d'_1}[r]  & \prod_{\alpha} \Ext^1_Z(\underline{M}, N_{\alpha}) \ar^{d'_2}[r]  & \Hom_Z(\underline{M}, R^1\prod N_{\alpha}) \ar^{d'_3}[r] & \Ext^2_Z(\underline{M}, \prod_{\alpha} N_{\alpha}) }
\]
Since $\pi: \underline{M} \to M$ is a $Z$-projective effacement, $g_0 = g_1 = 0$ by Lemma~\ref{lem:pe-equiv}.  By left exactness of $\Hom$, $g_2$ is a monomorphism.  Since 
$g_2 d_2 =  d'_2 g_1 = 0$, we see that $d_2 = 0$.  Thus the natural map $\Ext^1_Z(M, \prod_{\alpha} N_{\alpha}) \to \prod_{\alpha} \Ext^1_Z(M, N_{\alpha})$ is an isomorphism.  Since $M$ was an arbitrary object in $Z$, this fact also applies to the object $\underline{M}$, which shows that $d'_2 = 0$.  Thus $d'_3$ is a monomorphism and hence so 
is $d'_3 g_2$.  Now let $E$ be an injective hull of  $\prod_{\alpha} N_{\alpha}$ in $Z$ and consider the short exact sequence 
$0 \to \prod_{\alpha} N_{\alpha} \to E \to C \to 0$.  Then $\Ext^2_Z(M, \prod_{\alpha} N_{\alpha}) \cong \Ext^1_Z(M, C)$ for all objects $M$; 
since the natural map $\Ext^1_Z(M, C) \to \Ext^1(\underline{M}, C) = 0$ because $\pi$ is a $Z$-projective effacement, we get $g_3 = 0$.
Now $d'_3 g_2 = g_3 d_3 = 0$, but we already saw that $d'_3 g_2$ is a monomorphism.  This forces $\Hom_Z(M, R^1\prod N_{\alpha}) = 0$.  Since this holds for all objects $M \in Z$, 
$R^1\prod N_{\alpha} = 0$.  This holds for all small families $\{ N_{\alpha} \}$ of objects in $Z$, so $Z$ has exact products.
\end{remark}

\begin{remark}
Theorem~\ref{thm:equivs2} also gives an alternative proof of the other (harder) direction of \cite[Theorem 1.3]{Roos}, that if a Grothendieck category $Z$ has exact direct products, then it has $Z$-projective effacements, but only in the special case that $Z$ is locally noetherian.
This follows from the proof that $(4) \implies (2)$ in Theorem~\ref{thm:equivs2}, in the case $X =Z$ (note that $Z$ is trivially well-closed in $Z$).
\end{remark}

\begin{definition}
Let $Z$ be a closed subcategory of a locally noetherian Grothendieck category $X$.   
We say that $Z$ is \emph{very well-closed} in $X$ if any of the equivalent conditions in Theorem~\ref{thm:equivs2} holds 
(Van den Bergh uses this term for condition (1)).
\end{definition}
\noindent Very well-closedness is clearly a much more special condition than well-closedness.
One of its advantages is that it is stable under Gabriel product.
\begin{lemma}
\label{lem:prodvwc}
Let $X$ be a locally noetherian Grothendieck category with closed subcategories $Z_1, Z_2$.  If $Z_1$ and $Z_2$ are very-well-closed in $X$, 
then so is $Z_3 = Z_1 \cdot Z_2$.
\end{lemma}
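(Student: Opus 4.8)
The plan is to establish very well-closedness of $Z_3$ in the form of condition~(4) of Theorem~\ref{thm:equivs2}: I will show directly that every object $M \in X$ admits a $Z_3$-projective effacement. First note that $Z_3 = Z_1 \cdot Z_2$ is itself a closed subcategory of $X$ (Gabriel products of closed subcategories are closed, \cite[Proposition 3.3.6]{VdB}), so that very well-closedness is a meaningful notion for it. Since $Z_1$ and $Z_2$ are very well-closed, Theorem~\ref{thm:equivs2} guarantees in particular that every object of $X$ has both a $Z_1$-projective effacement and a $Z_2$-projective effacement.

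Given $M$, the idea is to stack a $Z_2$-effacement beneath a $Z_1$-effacement. Concretely, I would fix a $Z_1$-projective effacement $\pi: \underline{M} \to M$ with $\ker \pi \in Z_1$ (using Lemma~\ref{lem:kerinS}), then fix a $Z_2$-projective effacement $\rho: W \to \underline{M}$ with $\ker \rho \in Z_2$, and set $\theta = \pi \circ \rho: W \to M$. The claim to verify is that $\theta$ is a $Z_3$-projective effacement of $M$. (As a bonus, the exact sequence $0 \to \ker \rho \to \ker \theta \to \ker \pi \to 0$ shows $\ker \theta \in Z_1 \cdot Z_2 = Z_3$, though this is not strictly needed for the argument.)

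To check the lifting property, suppose $f: N \to M$ is an epimorphism with $\ker f \in Z_3$. By the definition of the Gabriel product, $\ker f$ has a subobject $L \in Z_2$ with $(\ker f)/L \in Z_1$. Factoring $f$ through $N' = N/L$ gives $f = g \circ h$, where $h: N \to N'$ is the quotient map with kernel $L \in Z_2$, and $g: N' \to M$ is induced by $f$, with kernel $(\ker f)/L \in Z_1$. Since $\pi$ is a $Z_1$-projective effacement and $g$ is an epimorphism with kernel in $Z_1$, there is $j: \underline{M} \to N'$ with $g j = \pi$. Since $\rho$ is a $Z_2$-projective effacement and $h: N \to N'$ is an epimorphism with kernel in $Z_2$, Lemma~\ref{lem:liftextend}(1) provides $e: W \to N$ with $h e = j \rho$. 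Then $f e = g h e = g j \rho = \pi \rho = \theta$, which is exactly the required lift. Hence $\theta$ is a $Z_3$-projective effacement, and as $M$ was arbitrary, condition~(4) of Theorem~\ref{thm:equivs2} holds for $Z_3$; therefore $Z_3$ is very well-closed in $X$.

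The main step is the verification that the composite $\theta$ is a $Z_3$-projective effacement --- in particular the factorization of an arbitrary epimorphism onto $M$ with kernel in $Z_3$ into a $Z_2$-step followed by a $Z_1$-step, and then the two successive liftings (one from the defining property of $\pi$, one from Lemma~\ref{lem:liftextend}). I expect this to be the only real content; everything else is bookkeeping. What makes the argument clean is that the projective-effacement characterization of Theorem~\ref{thm:equivs2} lets me work entirely with effacements and lifts, avoiding any direct manipulation of products or the derived functors $R^1\prod$.
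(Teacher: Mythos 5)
Your proof is correct and follows essentially the same route as the paper: the paper also establishes the lemma (alongside citing Van den Bergh) by composing a $Z_1$-projective effacement $\pi_M\colon \underline{M}\to M$ with a $Z_2$-projective effacement of $\underline{M}$ and checking that the composite is a $Z_1\cdot Z_2$-projective effacement, exactly via the factorization $f = g\circ h$ and the two liftings you describe. The paper leaves that verification as "easy to check," and your write-up supplies precisely those details correctly.
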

\begin{proof}
This is \cite[Proposition 3.5.12]{VdB}.  It is also easy to see why this is true in terms of projective effacements, as follows.  
For each $M \in X$ fix a $Z_1$-projective effacement $\pi_M: \underline{M} \to M$ and a $Z_2$-projective effacement $\rho_M: \uwave{M} \to M$.  We can assume that $\ker \pi_M \in Z_1$ and $\ker \rho_M \in Z_2$, by Lemma~\ref{lem:kerinS}.  Then $\theta_M = \pi_M \circ \rho_{\underline{M}}: \uwave{\underline{M}} \to M$ is a $Z_3 = Z_1 \cdot Z_2$-projective effacement of $M$, 
as is easy to check.
\end{proof}
\noindent There is no obvious reason, on the other hand, for the Gabriel product of well-closed subcategories to be well-closed, 
as Van den Bergh has also noted.  We do not know a counterexample, however.

One easy way to ensure (very) well-closedness, which often occurs in applications, is the following.
\begin{definition}
\label{def:self-effacing}
Let $Z$ be a closed subcategory of a locally noetherian Grothendieck category $X$, and let $S$ be a collection of objects in the 
category $Z$.  We say that an object $M \in X$ is \emph{$S$-projective self-effacing} if the identity map $M \to M$ is an $S$-projective effacement.
We say that $X$ has a \emph{set of $S$-projective self-effacing generators} if there is a small set of generators $\{ O_{\alpha} \}$ 
for $X$, where each $O_{\alpha}$ is $S$-projective self-effacing.
\end{definition}

\begin{proposition}
\label{prop:self-effacing}
\label{prop:cn}
Let $X$ be a Grothendieck category with closed subcategory $Z$, and let  $S$ be a collection of objects 
in the category $Z$.  Let $\{ O_{\alpha} \}$ be a small set of generators for $X$.
\begin{enumerate}
\item If $\Ext^1(O_{\alpha}, N) = 0$ for all $O_{\alpha}$ and all $N \in S$, 
then $\{ O_{\alpha} \}$ is a set of $S$-self-effacing generators for $X$.   

\item If $\{ O_{\alpha} \}$ is a set of $S$-self-effacing generators for $X$, then 
$X$ has $S$-projective effacements. 
\end{enumerate}
\end{proposition}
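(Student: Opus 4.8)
The plan is to deduce both parts directly from the $\Ext$-criterion for effacements (Lemma~\ref{lem:pe-equiv}) together with the reduction-to-generators principle (Lemma~\ref{lem:pe-props}(3)).

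For part (1), I would first unwind the definition: an object $M$ is $S$-projective self-effacing precisely when the identity morphism $1_M \colon M \to M$ is an $S$-projective effacement of $M$. Applying Lemma~\ref{lem:pe-equiv} to the surjection $\pi = 1_M$, this holds if and only if the induced map $(1_M)^*\colon \Ext^1(M,N) \to \Ext^1(M,N)$ vanishes for every $N \in S$. But $(1_M)^*$ is the identity endomorphism of $\Ext^1(M,N)$, so its vanishing is equivalent to $\Ext^1(M,N) = 0$. Taking $M = O_{\alpha}$ and invoking the hypothesis $\Ext^1(O_{\alpha},N) = 0$ for all $N \in S$ then shows each $O_{\alpha}$ is $S$-projective self-effacing, which is exactly the assertion. (If one prefers to avoid citing Lemma~\ref{lem:pe-equiv}, the same conclusion follows even more directly: any epimorphism $f\colon P \to O_{\alpha}$ with $\ker f = N \in S$ yields a short exact sequence $0 \to N \to P \to O_{\alpha} \to 0$ whose class in $\Ext^1(O_{\alpha},N)$ must be zero, so the sequence splits and the splitting is the required section $g$ with $fg = 1_{O_{\alpha}}$.)

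For part (2), each generator $O_{\alpha}$ is now assumed to be $S$-projective self-effacing, so in particular every $O_{\alpha}$ possesses an $S$-projective effacement, namely the identity $1_{O_{\alpha}}$. Since $\{ O_{\alpha} \}$ is a small set of generators for $X$, Lemma~\ref{lem:pe-props}(3) immediately gives that $X$ has $S$-projective effacements, as required.

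There is no genuine obstacle here: both statements are formal consequences of machinery already established in the preceding section. The only point needing a moment's care is the bookkeeping in part (1) --- recognizing that ``self-effacing'' is, by definition, the statement that the identity map is an effacement, and then matching this against the $\Ext$-vanishing criterion, in which the relevant induced map is the identity (and so vanishes exactly when the $\Ext$ group itself vanishes). The value of the proposition is practical rather than technical: it repackages a convenient $\Ext$-vanishing hypothesis on a set of generators into the hypothesis ``$X$ has $S$-projective effacements'' that is needed to run Proposition~\ref{prop:functors}.
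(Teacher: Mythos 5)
Your proposal is correct and takes essentially the same approach as the paper: part (2) is exactly the paper's argument via Lemma~\ref{lem:pe-props}(3), and your parenthetical splitting argument in part (1) (extension class vanishes, hence the sequence splits, hence the identity is an effacement) is precisely the paper's proof. Your main route through Lemma~\ref{lem:pe-equiv} is only a trivial repackaging of that same observation, since $(1_M)^*$ being zero is the same as $\Ext^1(M,N)=0$.
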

\begin{proof}
(1)  Given any short exact sequence $0 \to N \to P \to O_{\alpha} \to 0$ where $N \in S$, the sequence must be split since 
$\Ext^1(O_{\alpha}, N) = 0$.  Then it is clear that the identity map $O_{\alpha} \to O_{\alpha}$ is already  
an $S$-projective effacement, so each $O_{\alpha}$ is $S$-projective self-effacing. 

(2)  In particular, the hypothesis implies that each generator has an $S$-projective effacement.  Then $X$ has $S$-projective effacements, 
by Lemma~\ref{lem:pe-props}(3).  
\end{proof}
\noindent  For example, let $X$ be a locally noetherian Grothendieck category with noetherian generators $\{ O_{\alpha} \}$, let $Z$ be a closed subcategory, and let $S$ be the set of injective objects in the category $Z$.   If $\Ext^1(O_{\alpha}, E) = 0$ for all $E \in S$ and all 
$\alpha$, then the previous result implies that $Z$ is well-closed in $X$.  
We note that in this case we not only get that the functor 
$F = F_Z$ exists, but also that it can be described in a way similar to its description if $X$ were to have enough projectives, as given in Lemma~\ref{lem:ker}.  Namely, let $M \in X$.  Then there is a ``partial resolution" 
\[
P_1 \to P_0 \to M \to 0
\]
where $P_1, P_0$ are direct sums of generators.  The $P_i$ here are of course not projective, in general.  Still, we may apply $F$ to this sequence to obtain an exact sequence $F(P_1) \to F(P_0) \to F(M) \to 0$, since $F$ is right exact.  Since the $O_{\alpha}$ are $S$-projective self-effacing, 
the same is true for each $P$, by Lemma~\ref{lem:extsumprod}(3).  Thus by the construction of the functor $F$ given in Proposition~\ref{prop:functors}, we have $F(P_i) = K(P_i)$, where recall that $K$ is the functor that sends an object $N$ to its smallest subobject $N'$ such that $N/N' \in Z$.  Thus $K(P_1) \to K(P_0) \to F(M) \to 0$ is exact.  We see that 
$F(M)$ may be defined by taking such a resolution $P_1 \to P_0$ of $M$ by self-effacing objects, applying $K$, and taking the cokernel.
This is the same description as we would get if $X$ had enough projectives, where we would take a partial projective resolution instead.
However, defining the action of $F$ on morphisms is more awkward, as one lacks a comparison lemma for these resolutions 
by self-effacing objects.

\section{Examples}
\label{sec:examples}

We close the paper with some examples of how the theory works out for some important kinds of Grothendieck categories.
\begin{example}
\label{ex:ring-scheme}
The prototypical example of a Grothendieck category is the category $\rcatMod R$ of right modules over a ring $R$.   The closed subcategories of $\rcatMod R$ are exactly the subcategories of the form $\rcatMod R/I$ for (two-sided) ideals $I$ in $R$, by a result of Rosenberg \cite[Proposition 6.4.1]{Ros}.  The category has the generator $R$ and if $R$ is right noetherian, then $\rcatMod R$ is locally noetherian.

The category $X$ has enough projectives and exact products, so it is obvious that every closed subcategory $Z = \rcatMod R/I$ is very well-closed.  In fact $G = G_Z$ has the explicit description $G = \Hom_R( I, -)$ and the functor $F = F_Z$ has the explicit description $F = - \otimes_R I$.
Also, $\{R \}$ is a projective generator, so it is $S$-projective self-effacing for any collection of objects $S$.
\end{example}

\begin{example}
A somewhat less trivial example is the category $X = \rQch Y$ of quasi-coherent sheaves on a $k$-scheme $Y$.   For simplicity, 
suppose that $Y$ is projective over $k$ in this example.  Then it is well-known that $X$ is a Grothendieck category.  If $\mc{L}$ is an ample invertible sheaf on $Y$, then $\{ \mc{L}^{\otimes n} | n \in \mb{Z} \}$ is a set of noetherian generators for $X$, so $X$ is certainly locally noetherian.
Smith has shown that the closed subcategories of $X$ are those of the form $Z = \rQch W$ for closed subschemes $W$ of $Y$ \cite[Theorem 4.1]{Sm1}.  
The category $X$ has enough injectives, but not enough projectives, in general, and need not have exact products.  (We are unaware of a general result about which schemes have categories of quasi-coherent sheaves with exact direct products, but $\mb{P}^1$ already does not.)

If $\mc{I}$ is the ideal sheaf defining the closed subscheme $W$ then the functor $G_Z$ has the explicit description $G_Z =  \shHom_{\mc{O}_X}(\mc{I}, -)$, and this has the obvious left adjoint $F_Z = (-) \otimes_{\mc{O}_X} \mc{I}$.  Thus every closed subcategory $Z$ is well-closed, but need not be very well-closed (since this is equivalent to $Z$ having exact products).  

Suppose that $\mc{E}$ is an injective object in the category $Z = \rQch W$.  Then $\mc{E}$ is flasque on $W$, so it is also flasque (but 
not injective in general) when considered as a sheaf on $Y$.  This is enough to conclude that $H^1(Y, \mc{E}) 
= \Ext^1_X(\mc{O}_X, \mc{E}) = 0$ \cite[Proposition 2.5]{Ha}.   Similarly, 
$\Ext^1(\mc{L}^{\otimes n}, \mc{E}) \cong \Ext^1(\mc{O}_X, \mc{E} \otimes \mc{L}^{\otimes -n}) = 0$ since 
$\mc{E} \otimes \mc{L}^{\otimes -n}$ is still flasque.  In particular, $X$ has a set of $S$-projective self-effacing generators, where $S$ is the class of injective objects in $Z$.
\end{example}
\noindent We assumed projectivity of $Y$ in the previous example for convenience only.  Ryo Kanda has proved that for any locally noetherian scheme $Y$, the closed subcategories of $\rQch Y$ are still exactly the categories $\rQch W$ for closed subschemes $W$ of $Y$ \cite[Theorem 11.11]{Kan}.  

\begin{example}
let $H$ be a group and let $R = \bigoplus_{h \in H} R_h$ be an $H$-graded $k$-algebra.  Let $X = \rGr R$ be the category of $H$-graded 
right $R$-modules.  Then $X$ is a Grothendieck category.   For any $h \in H$ and $M \in X$ we have the shifted module 
$M(h)$ with $M(h)_g = M_{hg}$.  Then $\{ R(h) | h \in H \}$ is a set of generators for $X$, and so if $R$ is graded right noetherian, then 
$X$ is a locally noetherian category.  In fact the $R(h)$ are projective generators, so $X$ has enough projectives.  The category $X$ also clearly has 
exact products.  Because of this, every closed subcategory $Z$ of $X$ must be well-closed, using the characterization in Theorem~\ref{thm:equivs}(3).
Also, $Z$ clearly inherits the property of having exact direct products, so in fact $Z$ is very well-closed.  Since the $R(h)$ are projective, 
they are in fact $S$-projective self-effacing generators for $X$, for any collection $S$ of objects.

If $I$ is a graded ideal of $R$, then $Z = \rGr R/I$ is closed in $X$, and we have the explicit descriptions of the functors 
$G_Z = \Hom_R(I, -)$ and $F_Z = - \otimes_R I$ as usual.  However, closed subcategories of $\rGr R$ need not be defined 
by two-sided ideals in this way.  For example, consider any $\mb{N}$-graded algebra $R$ and let $Z$ be the subcategory of $\rGr R$ consisting of $\mb{Z}$-graded modules $M$ with $M = M_{\geq 0}$,  in other words, the subcategory of nonnegatively graded modules.  It is clear that $Z$ is closed under subquotients, direct sums, and products, so that $Z$ is a closed subcategory.
However, $Z$ is clearly not equal to $\rGr R/I$ for a graded ideal $I$ of $R$, since any such subcategory defined by an ideal is closed under 
shift, while $Z$ is not.  
\end{example}

Sierra studied the group of autoequivalences of the category of graded modules over the Weyl algebra \cite{Si}.  It seems 
interesting to note that the nontrivial autoequivalences in her work arise as functors $F_Z$ as defined in this paper.
\begin{example}
let $k$ be an algebraically closed field of characteristic $0$.
Let $R$ be the first Weyl algebra $R = k \langle x, y \rangle/(yx-xy-1)$, which is $\mb{Z}$-graded with $\deg x = 1, \deg y = -1$.
The simple objects in $\rGr R$ are parametrized by the affine line over $k$ with its integer points doubled.  More specifically, 
there is a simple module $M_{\lambda}$ for each $\lambda \in k \setminus \{ \mb{Z} \}$ and $2$ simple modules 
$X(n), Y(n)$ for each $n \in \mb{Z}$ \cite[Lemma 4.1]{Si}.  Sierra proves that any autoequivalence of $\rGr R$ is determined by its action on the simple modules \cite[Corollary 5.6]{Si}, and for each $n$ she constructs an interesting autoequivalence $F$ that switches $X(n)$ and $Y(n)$ and fixes all other simple modules   \cite[Proposition 5.7]{Si}.  
This $F$ can be defined as follows:  for each graded rank one projective module $P$, $\Hom_{\rGr R}(P, X(n) \oplus Y(n)) = k$; thus $P$ surjects onto exactly one of the modules $X(n), Y(n)$.  Let $F(P)$ be the kernel of this surjection.  This action extends to a unique exact right exact functor $F$ on the whole category, since $\rGr A$ has enough projectives, similarly as in our discussion after Proposition~\ref{prop:functors}.

The graded simple modules $M$ satisfy $\dim_k M_n \leq 1$ for all $n \in \mb{Z}$ and $\Hom_{\rGr R}(M, M) = k$, from which one may easily see that all graded simple modules are tiny in the category $\rGr R$.  Now for fixed $n$ let $Z$ be the full subcategory of $\rGr R$ consisting of all direct sums of $X(n)$ and $Y(n)$, 
which is closed since these simples are tiny.  Let $F_Z$ be the corresponding functor constructed by Proposition~\ref{prop:functors}(1)
with $S = Z$.  Then since every projective is $Z$-self-effacing, we see that for a rank one projective $P$, the object $F_Z(P)$ is the 
smallest subobject $Q$ of $P$ such $P/Q \in Z$.  Since $\Hom_{\rGr R}(P, X(n) \oplus Y(n)) = k$, it is easy to see that 
$F_Z$ is the same as the functor $F$ described above.
\end{example}

Finally, we discuss our main motivating example: noncommutative projective schemes.  
Let $A$ be an $\mb{N}$-graded $k$-algebra which is connected ($A_0 = k$), finitely generated, and noetherian.  Let $\rTors A$ be the full subcategory of $\rGr A$ consisting of modules $M$ such that for all $m \in M$, $m A_{\geq n} = 0$ for some $n \geq 0$.  
Then one may define the quotient category $\rQgr A = \rGr A/\rTors A$.  
This category has the same objects as $\rGr A$, but for $M \in \rGr A$ we write its image in $\rQgr A$ as $\pi M$.  
The morphisms are given by 
\[
\Hom_{\rQgr A}(\pi M, \pi N) = \dirlim\ \Hom_{\rGr A}(M_{\alpha}, N/N_{\beta}),
\]
where the limit ranges over those graded submodules $M_{\alpha} \subseteq M$ such that $M/M_{\alpha} \in \rTors A$ and 
those graded submodules $N_{\beta} \subseteq N$ such that $N_{\beta} \in \rTors A$.  
 The functor $\pi: \rGr A \to \rQgr A$ is exact, 
and has a left adjoint  $\omega: \rQgr A \to \rGr A$ which can be given explicitly as $\omega(\pi M) = \underset{n \to \infty}{\lim} \bigoplus_m \Hom_{\rGr A}(A_{\geq n}, M(m))$.  The category $\rQgr A$ is called a \emph{noncommutative projective scheme}.    For more details, see \cite{AZ}.

The algebra $A$ is said to satisfy the \emph{$\chi_i$ condition} if  $\dim_k \Ext^j_A(k, M) < \infty$ for all $j \leq i$, for all noetherian modules $M \in \rGr A$, where $k = A/A_{\geq 1}$ is the trivial module.  Most well-behaved graded algebras satisfy $\chi_i$ for all $i \geq 0$, in which case we say that $A$ satisfies $\chi$.  In this case the category $\rQgr A$ is \emph{Ext-finite} in the sense that $\dim_k \Ext^i_{\rQgr A}(M,N) < \infty$ for all noetherian objects $M,N \in \rQgr A$  \cite[Corollary 7.3(3)]{AZ}.

\begin{example}
Let $X = \rQgr A$ for a connected, finitely generated $\mb{N}$-graded noetherian $k$-algebra $A$ satisfying $\chi$, and maintain the notation above.   Since $\rGr A$ is a Grothendieck category, so is its quotient category $X = \rQgr A$ \cite[Corollary 4.6.2]{Po}.   As in the case of categories of quasi-coherent sheaves on commutative schemes, the category $X$ usually does not have enough projectives or exact products.

Since $\{ A(n) | n \in \mb{Z} \}$ generates $\rGr A$,  $\{ \pi A(n) | n \in \mb{Z} \}$ generates $\rQgr A$, and so $X$ is locally noetherian.  Write $\mc{O} = \pi A$.  Artin and Zhang defined a cohomology theory for $X$ as follows: 
for $N \in X$, let $H^i(X, N) = \Ext^i_X(\mc{O}, N)$.  The shift autoequivalence $(1)$ of $\rGr A$ induces an autoequivalence of $\rQgr A$ we also 
write as $(1)$, so $\mc{O}(n) = \pi A(n)$.   Then $H^i(X, N(n)) = \Ext^i_X(\mc{O}, N(n)) = \Ext^i_X(\mc{O}(-n), N)$.

Suppose that $I$ is a graded ideal of $A$.  Then $Z = \rQgr A/I$ is a closed subcategory of $X = \rQgr A$.  The non-trivial proof was given by Smith \cite[Theorem 3.2]{Sm2}, \cite[Theorem 1.2]{Sm3}.  In this case, Artin and Zhang showed that cohomology restricts nicely to such a closed subcategory.  Let $\mc{O}_Z = \pi(A/I)$.  Then if $N \in Z$, we have $H^i(Z,  N) = \Ext^i_Z(\mc{O}_Z, N) = \Ext^i_X(\mc{O}_X, N) = H^i(X, N)$
\cite[Theorem 8.3(3)]{AZ}, similarly as in the commutative case (cf. \cite[Lemma 2.10]{Ha}).  
Thus if $E$ is an injective object in the category $Z$, since  
$Z$ is closed under the shift autoequivalence $(1)$, $E(-n)$ is also injective in $Z$, and 
we will have $\Ext^i_X(\mc{O}(n), E) = \Ext^i_X(\mc{O}_X, E(-n)) = \Ext^i_Z(\mc{O}_Z, E(-n)) = 0$. 
Hence $\{\mc{O}(n) | n \in \mb{Z} \}$ is a set of $S$-projective self-effacing generators, for $S$ the class of injective objects in $Z$.
In particular, $X$ has $S$-projective effacements by Proposition~\ref{prop:self-effacing}, and $Z$ is well-closed in $X$.  As for categories of commutative sheaves, $Z$ will not generally have exact products, and so $Z$ is not generally very well-closed.

However, similarly as for the category $\rGr A$, closed subcategories $Z$ of $\rQgr A$ need not be defined by two sided ideals of $A$, even 
when $Z$ is a closed point.    For example, suppose that $A$ is generated by $A_1$ as a $k$-algebra.  
A \emph{point module} for $A$ is a graded right module $M$ which is generated in degree $0$ and satisfies $\dim_k M_n = 1$ for all $n \geq 0$.   For any point module $M$, $\pi(M)$ is a tiny simple object in $\rQgr A$ \cite[Proposition 5.8]{Sm1}, and thus the category of small direct sums of $\pi(M)$ is a closed point $Z$ in $\rQgr A$.  But generally $M \cong A/I$ for a right but not 2-sided ideal $I$ of $A$, and so $Z$ is not typically defined by a 2-sided ideal of $A$.
\end{example}

To close the paper, we show that our theory applies to prove that closed points, and the locally finite categories built out of them using the Gabriel product, are very well-closed in quite general circumstances.
\begin{theorem}
\label{thm:pt}  $X$ be a locally noetherian Grothendieck $k$-category.
\begin{enumerate}
\item Suppose that $Z$ is a closed point in $X$, that is, $Z$ is the category of direct sums of a tiny simple object $P$ in $X$.
Suppose that $\dim_k \End_X(P) < \infty$ and that $\dim_k \Ext^1_X(M,P) < \infty$ for all noetherian objects $M \in X$. 
Then $X$ has $Z$-projective effacements; that is, $Z$ is very well-closed in $X$.  
\item  Let $Z_1, Z_2, \dots Z_n$ be closed points in $X$ (possibly with repeats), each of which satisfies the hypothesis of (1).  
Then any closed subcategory $Z$ contained in the Gabriel product $Z_1 \cdot Z_2 \cdot \ldots \cdot Z_n$ has $Z$-projective effacements; i.e. 
$Z$ is very well-closed in $X$.
\end{enumerate}
\end{theorem}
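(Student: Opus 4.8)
The plan is to derive both parts from the effacement characterization of very well-closedness in Theorem~\ref{thm:equivs2}, producing the required $Z$-projective effacements through the universal-extension construction of Proposition~\ref{prop:efface}.

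For part (1), since $Z$ is a closed point it is equivalent to $\rcatMod D$ for the division ring $D = \End_X(P)$ (finite-dimensional over $k$ by hypothesis), and over a division ring every module is free; hence every object of $Z$ is isomorphic to a direct sum of copies of $P$. I therefore intend to apply Proposition~\ref{prop:efface} with $S = Z$ and the small subcollection $S' = \{P\}$, which meets the requirement that each object of $S$ be a direct sum of objects of $S'$. Fix a noetherian object $M \in X$. By Remark~\ref{rem:single}, to invoke Proposition~\ref{prop:efface} it suffices to verify the single Ext--product isomorphism \eqref{eq:extprod}, which here reads $\Ext^1_X(M, \prod_{v \in \beta} P) \to \prod_{v \in \beta} \Ext^1_X(M,P)$ for a chosen $k$-basis $\beta$ of $\Ext^1_X(M,P)$. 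This is exactly where the finiteness hypothesis does its work: because $\dim_k \Ext^1_X(M,P) < \infty$, the set $\beta$ is finite, the product is a finite product, and the map is an isomorphism automatically by Lemma~\ref{lem:extsumprod}(2). Proposition~\ref{prop:efface} then furnishes a $Z$-projective effacement of $M$. Applying this to a set of noetherian generators of $X$ and invoking Lemma~\ref{lem:pe-props}(3), I conclude that every object of $X$ has a $Z$-projective effacement, which is condition (4) of Theorem~\ref{thm:equivs2}, so $Z$ is very well-closed.

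For part (2), each $Z_i$ is very well-closed by part (1). Writing $W_j = Z_1 \cdot \ldots \cdot Z_j$ and using $W_j = W_{j-1} \cdot Z_j$ (associativity of the Gabriel product), an induction on $j$ with Lemma~\ref{lem:prodvwc} shows that $W = Z_1 \cdot Z_2 \cdot \ldots \cdot Z_n$ is very well-closed. It then remains to observe that very well-closedness descends to closed subcategories: by condition (1) of Theorem~\ref{thm:equivs2}, the very well-closedness of $W$ says that $[R^1 \prod](N_{\alpha}) = 0$, the derived product being taken in $X$, for every small family $\{ N_{\alpha} \}$ of objects of $W$. If $Z \subseteq W$ is any closed subcategory of $X$, then every small family of objects of $Z$ is in particular a family of objects of $W$, so the same vanishing holds; thus $Z$ satisfies condition (1) of Theorem~\ref{thm:equivs2} and is very well-closed.

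The argument is essentially formal once the machinery of Sections~\ref{sec:effacements}--\ref{sec:wellclosed} is in place. The one genuinely substantive step is the reduction in part (1), via Remark~\ref{rem:single} together with the finiteness of $\dim_k \Ext^1_X(M,P)$, of the needed Ext--product compatibility to a \emph{finite} index set; this is what lets the proof avoid verifying the delicate infinite-product condition (3) of Theorem~\ref{thm:equivs2} directly.
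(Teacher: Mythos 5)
Your proof is correct and follows essentially the same route as the paper's: part (1) invokes Proposition~\ref{prop:efface} with $S' = \{P\}$ via Remark~\ref{rem:single}, using the finiteness of $\dim_k \Ext^1_X(M,P)$ to reduce \eqref{eq:extprod} to a finite product handled by Lemma~\ref{lem:extsumprod}(2), then Lemma~\ref{lem:pe-props}(3); part (2) combines part (1) with Lemma~\ref{lem:prodvwc} and descent to closed subcategories. The only difference is that you spell out the descent step (via the $[R^1\prod]$-vanishing formulation in Theorem~\ref{thm:equivs2}(1)), which the paper asserts without proof.
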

\begin{proof}
(1) We would like to apply Proposition~\ref{prop:efface} with $T = \{ P \}$.  For this, as noted in Remark~\ref{rem:single}, it is enough to show that 
for a noetherian object $M$, then \eqref{eq:extprod} holds, namely 
\begin{equation}
\label{eq:extprod2}
\phi: \Ext^1_X(M, \prod_{v \in \beta} N_v) \to \prod_{v \in \beta} \Ext^1_X(M, N_v)
\end{equation}
is an isomorphism, where $\beta$ is a $k$-basis of $\Ext^1_X(M,P)$.  

By assumption, $\Ext^1_X(M,P)$ is finite dimensional over $k$, so $\beta$ is finite.   Thus \eqref{eq:extprod2} holds 
because finite direct products and direct sums coincide, and  $\Ext^1$ commutes with direct sums in the second coordinate when $M$ is noetherian, by Lemma~\ref{lem:extsumprod}(2).  Thus the proof of Proposition~\ref{prop:efface} goes through to show that any noetherian 
object $M$ has a $Z$-projective effacement, since every object in $Z$ is a direct sum of copies of objects in $T$.
Then $X$ has $Z$-projective effacements by Lemma~\ref{lem:pe-props}(3).  

(2).  This is immediate from part (1) and the fact that being very well-closed is stable under the Gabriel product 
(by Lemma~\ref{lem:prodvwc}) and under taking subcategories.
\end{proof}

\begin{corollary}
Let $A$ be a connected $\mb{N}$-graded finitely generated noetherian $k$-algebra satisfying $\chi$, and let $X = \rQgr A$.  Then 
any closed subcategory of a finite Gabriel product of closed points is very well-closed in $X$.
\end{corollary}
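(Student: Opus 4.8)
The plan is to deduce the corollary directly from Theorem~\ref{thm:pt}(2). That result requires only that $X$ be a locally noetherian Grothendieck $k$-category and that each closed point $Z_i$ appearing in the Gabriel product be generated by a tiny simple $P_i$ satisfying the two finiteness conditions $\dim_k \End_X(P_i) < \infty$ and $\dim_k \Ext^1_X(M, P_i) < \infty$ for all noetherian $M \in X$. The entire task therefore reduces to verifying these hypotheses in the setting $X = \rQgr A$.

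First I would recall, from the preceding example, that for $A$ connected, finitely generated, and noetherian the category $X = \rQgr A$ is a locally noetherian Grothendieck $k$-category, which is exactly the ambient setting demanded by Theorem~\ref{thm:pt}. The key external input is the \emph{Ext-finiteness} of $X$: because $A$ satisfies $\chi$, the Artin--Zhang theory gives $\dim_k \Ext^i_X(M, N) < \infty$ for all noetherian objects $M, N \in X$ and all $i \geq 0$, as noted in the discussion above \cite[Corollary 7.3(3)]{AZ}.

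It then remains only to feed this into the hypotheses. By definition each closed point $Z_i$ is the category of direct sums of a tiny simple object $P_i \in X$. A simple object has no proper nonzero subobjects and so is trivially noetherian; hence applying Ext-finiteness with the noetherian argument $P_i$ yields $\dim_k \End_X(P_i) = \dim_k \Hom_X(P_i, P_i) < \infty$, and applying it with $N = P_i$ and an arbitrary noetherian $M$ yields $\dim_k \Ext^1_X(M, P_i) < \infty$. Thus every $Z_i$ satisfies the hypotheses of Theorem~\ref{thm:pt}(1), and Theorem~\ref{thm:pt}(2) immediately gives that any closed subcategory contained in $Z_1 \cdot Z_2 \cdot \ldots \cdot Z_n$ is very well-closed in $X$.

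The substance of the argument is really concentrated in the single cited fact that $\rQgr A$ is Ext-finite under $\chi$; once that is available there is no genuine obstacle, since the needed finite-dimensionality follows because tiny simples are noetherian, and the stability of very well-closedness under Gabriel products and under passage to closed subcategories has already been packaged into Theorem~\ref{thm:pt}(2) (via Lemma~\ref{lem:prodvwc}). The corollary is thus a formal consequence of the results already established together with the Artin--Zhang vanishing.
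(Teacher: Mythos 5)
Your proposal is correct and follows essentially the same route as the paper: both deduce the result from Theorem~\ref{thm:pt}(2) by invoking Ext-finiteness of $\rQgr A$ under $\chi$ and observing that simple objects are noetherian, so the finiteness hypotheses $\dim_k \End_X(P_i) < \infty$ and $\dim_k \Ext^1_X(M, P_i) < \infty$ hold. The paper's proof is just a more compressed version of exactly this argument.
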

\begin{proof}
The $\chi$ condition implies that $X$ is Ext-finite, as already noted.  Since a simple object $P$ is noetherian, $\dim_k \End_X(P) < \infty$ and $\dim_k \Ext^1_X(M, P)< \infty$ holds for all noetherian objects $M$, so the hypothesis of  Theorem~\ref{thm:pt} is satisfied.
\end{proof}

The property of Ext-finiteness should be thought of as a kind of properness assumption for noncommutative schemes.  Thus Theorem~\ref{thm:pt} can be interpreted to say that closed points (or more generally, finite length subschemes) of proper noncommutative schemes ought to be well-closed.   This may be useful for iterating Van den Bergh's blowing up procedure, which does not seem to preserve projectivity in general.

\begin{remark}
When Theorem~\ref{thm:pt}(1) applies, Proposition~\ref{prop:efface} constructs a $Z$-projective effacement of a noetherian object $M$ quite explicitly, by adding $n = \dim_k \Ext^1(M,P)$ copies of $P$ to the bottom of $M$ in the $n$ different ways given by a basis of $\Ext^1_X(M,P)$.
More exactly, the effacement is given by
the exact sequence
\[
0 \to \Ext^1(M,P) \otimes_k P \to \underline{M} \to M \to 0,
\]
corresponding under the correspondence between $\Ext^1$ and extensions to a diagonal element 
\[
\theta = \sum_{i=1}^n v_i \otimes v_i \in  \Ext^1(M,P) \otimes_k \Ext^1(M,P) \cong \Ext^1(M, \Ext^1(M,P) \otimes_k P)
\]
 where $\{ v_i \}$ is a $k$-basis of $\Ext^1(M,P)$.
\end{remark}

\providecommand{\bysame}{\leavevmode\hbox to3em{\hrulefill}\thinspace} \providecommand{\MR}{\relax\ifhmode\unskip\space\fi MR }
\providecommand{\MRhref}[2]{%
  \href{http://www.ams.org/mathscinet-getitem?mr=#1}{#2}
} \providecommand{\href}[2]{#2}

\end{document}